\definecolor{cite}{rgb}{0.30,0.60,1.00}
\definecolor{url}{rgb}{0.00,0.00,0.80}
\definecolor{link}{rgb}{0.40,0.10,0.20}
\DeclareSymbolFont{cyrletters}{OT2}{wncyr}{m}{n}
\DeclareMathSymbol{\Sha}{\mathalpha}{cyrletters}{"58}
\numberwithin{equation}{section}
\theoremstyle{plain}
\newtheorem{proposition}{Proposition}[section]
\newtheorem{conjecture}[proposition]{Conjecture}
\newtheorem{corollary}[proposition]{Corollary}
\newtheorem{lem}[proposition]{Lemma}
\newtheorem{theorem}[proposition]{Theorem}
\theoremstyle{definition}
\newtheorem{definition}[proposition]{Definition}
\newtheorem{notation}[proposition]{Notation}
\newtheorem{hypothesis}[proposition]{Hypothesis}
\theoremstyle{remark}
\newtheorem{remark}[proposition]{Remark}
\newtheorem{example}[proposition]{Example}
\renewcommand{\b}[1]{\mathbf{#1}}
\renewcommand{\c}[1]{\mathcal{#1}}
\renewcommand{\d}[1]{\mathbb{#1}}
\newcommand{\f}[1]{\mathfrak{#1}}
\renewcommand{\r}[1]{\mathrm{#1}}
\newcommand{\s}[1]{\mathscr{#1}}
\renewcommand{\(}{\left(}
\renewcommand{\)}{\right)}
\newcommand{\res}{\mathbin{|}}
\newcommand{\ol}[1]{\overline{#1}{}}
\newcommand{\wt}[1]{\widetilde{#1}{}}
\renewcommand{\leq}{\leqslant}
\renewcommand{\geq}{\geqslant}
\newcommand{\bZ}{\b Z}
\newcommand{\bh}{\b h}
\newcommand{\cE}{\c E}
\newcommand{\cF}{\c F}
\newcommand{\cI}{\c I}
\newcommand{\cS}{\c S}
\newcommand{\cV}{\c V}
\newcommand{\cW}{\c W}
\newcommand{\dA}{\d A}
\newcommand{\dB}{\d B}
\newcommand{\dC}{\d C}
\newcommand{\dL}{\d L}
\newcommand{\dN}{\d N}
\newcommand{\dQ}{\d Q}
\newcommand{\dR}{\d R}
\newcommand{\dV}{\d V}
\newcommand{\dW}{\d W}
\newcommand{\dZ}{\d Z}
\newcommand{\fS}{\f S}
\newcommand{\fU}{\f U}
\newcommand{\fc}{\f c}
\newcommand{\fp}{\f p}
\newcommand{\rH}{\r H}
\newcommand{\rU}{\r U}
\newcommand{\rd}{\r d}
\newcommand{\rh}{\r h}
\newcommand{\rt}{\r t}
\newcommand{\sK}{\s K}
\newcommand{\sL}{\s L}
\newcommand{\sM}{\s M}
\newcommand{\sP}{\s P}
\newcommand{\sS}{\s S}
\newcommand{\sT}{\s T}
\newcommand{\sX}{\s X}
\newcommand{\tP}{\mathtt{P}}
\newcommand{\tS}{\mathtt{S}}
\newcommand{\tV}{\mathtt{V}}
\newcommand{\tc}{\mathtt{c}}
\newcommand{\te}{\mathtt{e}}
\renewcommand{\tt}{\mathtt{t}}
\newcommand{\bmu}{\boldsymbol{\mu}}
\newcommand{\llangle}{\langle\!\langle}
\newcommand{\rrangle}{\rangle\!\rangle}
\newcommand{\pres}[2]{\prescript{#1}{}{#2}}
\newcommand{\ab}{\r{ab}}
\newcommand{\cris}{\r{cris}}
\newcommand{\dr}{\r{dR}}
\newcommand{\fin}{\r{fin}}
\newcommand{\inert}{\r{inert}}
\newcommand{\ram}{\r{ram}}
\newcommand{\spl}{\r{spl}}
\newcommand{\tor}{\r{tor}}
\newcommand{\unr}{\r{unr}}
\DeclareMathOperator{\As}{As}
\DeclareMathOperator{\Aut}{Aut}
\DeclareMathOperator{\CH}{CH}
\DeclareMathOperator{\Char}{char}
\DeclareMathOperator{\Cor}{Cores}
\DeclareMathOperator{\diag}{diag}
\DeclareMathOperator{\Gal}{Gal}
\DeclareMathOperator{\GL}{GL}
\DeclareMathOperator{\Hom}{Hom}
\DeclareMathOperator{\Ker}{ker}
\DeclareMathOperator{\Nm}{Nm}
\DeclareMathOperator{\RE}{Re}
\DeclareMathOperator{\Res}{Res}
\DeclareMathOperator{\Spec}{Spec}
\DeclareMathOperator{\Tr}{Tr}
\DeclareMathOperator{\vol}{vol}
\begin{document}

\title{Anticyclotomic $p$-adic $L$-functions for Rankin--Selberg product}

\author{Yifeng Liu}
\address{Institute for Advanced Study in Mathematics, Zhejiang University, Hangzhou 310058, China}
\email{liuyf0719@zju.edu.cn}

\date{\today}
\subjclass[2020]{11G18, 11G40, 11R34}

\begin{abstract}
  We construct $p$-adic $L$-functions for Rankin--Selberg products of automorphic forms of hermitian type in the anticyclotomic direction for both root numbers. When the root number is $+1$, the construction relies on global Bessel periods on definite unitary groups which, due to the recent advances on the global Gan--Gross--Prasad conjecture, interpolate classical central $L$-values. When the root number is $-1$, we construct an element in the Iwasawa Selmer group using the diagonal cycle on the product of unitary Shimura varieties, and conjecture that its $p$-adic height interpolates derivatives of cyclotomic $p$-adic $L$-functions. We also propose the nonvanishing conjecture and the main conjecture in both cases.
\end{abstract}

\maketitle

\setcounter{tocdepth}{1}
\tableofcontents

\section{Introduction}
\label{ss:1}

In their pioneer work \cite{BD96}, Bertolini and Darmon constructed what they called Heegner distribution in both definite and indefinite cases for a classical weight two modular form and an imaginary quadratic field, opening the study of $p$-adic $L$-functions in the anticyclotomic direction. In the definite case, the product of Heegner distributions leads to a $p$-adic measure that interpolates central values of (classical) Rankin--Selberg $L$-functions, using the Waldspurger formula. In the indefinite case, the height of Heegner distribution leads to a $p$-adic measure that interpolates central derivatives of Rankin--Selberg $L$-functions, using the Gross--Zagier formula and its generalization. Since then, there have been numerous works addressing related problems including, for example, \cite{BD05} in the definite case and \cite{How04} in the indefinite case.

In this note, we make the first step to generalize these ideas, still along the anticyclotomic direction, to Rankin--Selberg products of higher ranks/dimensions, that is, beyond the $\GL_1\times\GL_2$-case. We consider a CM extension $E/F$, and a suitable admissible representation $\Pi=\Pi_n\boxtimes\Pi_{n+1}$ of $\GL_n(\dA_E^\infty)\times\GL_{n+1}(\dA_E^\infty)$ with coefficients in a $p$-adic field $\dL$ (see Section \ref{ss:ggp} for the precise setting; in particular, after base change to $\dC$, $\Pi$ is the finite part of a cohomological isobaric automorphic representation of minimal weights and with certain conjugate self-duality). Let $\tP$ be a set of $p$-adic places $v$ of $F$ split in $E$ such that $\Pi_v$ is ordinary (Definition \ref{de:ordinary}). Denote by $\cE_\tP/E$ the maximal anticyclotomic (with respect to $E/F$) extension of $E$ unramified outside (places above) $\tP$ with torsion free Galois group; and put $\Gamma_\tP\coloneqq\Gal(\cE_\tP/E)$, which is naturally a quotient of $E^\times\backslash(\dA_E^\infty)^\times$ and also a finite free $\dZ_p$-module. Possibly replacing $\dL$ by a finite extension, there is a Galois representation $\dW_\Pi$ of $E$ with coefficients in $\dL$ of rank $n(n+1)$ and weight $-1$ associated with $\Pi$ (see Example \ref{ex:galois}).

We may attach to $\Pi$ a ``root number'' $\epsilon(\Pi)\in\{\pm 1\}$ and call it coherent/incoherent if $\epsilon(\Pi)=+1$/$-1$, parallel to the definite/indefinite cases in \cite{BD96}.

When $\Pi$ is coherent, we show in Theorem \ref{th:function} that there exists a unique (bounded) measure $\sL^0_{\cE_\tP}(\Pi)\in\dL[[\Gamma_\tP]]^\circ\coloneqq\dZ_p[[\Gamma_\tP]]\otimes_{\dZ_p}\dL$ such that for every finite character $\chi\colon\Gamma_\tP\to\ol\dL^\times$ that is ramified at \emph{every} place above $\tP$ and every embedding $\iota\colon\ol\dL\to\dC$, we have
\[
\iota\sL^0_{\cE_\tP}(\Pi)(\chi)=\iota\omega\cdot
\frac{\Delta_{n+1}\cdot L(\tfrac{1}{2},\iota(\Pi_n\otimes\wt\chi)\times\iota\Pi_{n+1})}
{2^{d(\Pi_n)+d(\Pi_{n+1})}\cdot L(1,\iota\Pi_n,\As^{(-1)^n})L(1,\iota\Pi_{n+1},\As^{(-1)^{n+1}})}.
\]
Here, $\omega\in\dL^\times$ is an elementary factor depending on $\Pi_v$ and the conductor of $\chi_v$ for $v\in\tP$; $\Delta_{n+1}$ is a positive real constant in Definition \ref{de:relevant}(1); $\wt\chi$ is just $\chi$ but regarded as a character of $E^\times\backslash(\dA_E^\infty)^\times$; $d(\Pi_n)$ and $d(\Pi_{n+1})$ are certain positive integers introduced in Remark \ref{re:relevant}(4), same as the ones appearing in the refined Gan--Gross--Prasad conjecture \cite{Har14}. The proof relies on the recent advances on the global Gan--Gross--Prasad conjecture \cites{BPLZZ,BPCZ} and a local Birch lemma \cites{KMS00,Jan11}. We then propose a conjecture predicting when $\sL^0_{\cE_\tP}(\Pi)$ is nonvanishing (Conjecture \ref{co:vanishing1}), and an Iwasawa main conjecture for the Iwasawa Selmer group $\sX(\cE_\tP,\dW_\Pi)$ (Conjecture \ref{co:main1}).

When $\Pi$ is incoherent, we construct a collection of elements $(\kappa(\varphi))_\varphi$ in the compact Iwasawa Selmer group $\sS(\cE_\tP,\dW_\Pi)$ (Definition \ref{no:selmer}) of $\dW_\Pi$ along $\Gamma_\tP$, parameterized by ordinary vectors $\varphi\in\pi$. Here, $\pi$ is a certain ``descent'' of $\Pi$ to a unitary product group $G$ determined by the local Gan--Gross--Prasad conjecture (now theorem). The element $\kappa(\varphi)$ is constructed by variants of special cycles on the Shimura varieties associated with $G$ given by the natural diagonal subgroup $H\subseteq G$, generalizing the analogous construction in \cites{Ber95,BD96} using Heegner points. We then define a $p$-adic measure $\sL^1_{\cE_\tP}(\Pi)$ to be the Iwasawa $p$-adic height pairing between $\kappa(\varphi)$ and $\kappa(\varphi^\vee)$, modified by some local terms so that the result is independent of the choices of test vectors. Finally, we propose three conjectures: the nonvanishing conjecture for $\sL^1_{\cE_\tP}(\Pi)$ (Conjecture \ref{co:vanishing2}), an Iwasawa main conjecture (Conjecture \ref{co:main2}) and its variant (Conjecture \ref{co:main3}), and the interpolation conjecture to the derivative of cyclotomic $p$-adic $L$-function (Conjecture \ref{co:derivative}).

In our subsequent work \cite{LTX}, we show in many cases one side of the divisibility for the main conjectures for both root numbers (Conjectures \ref{co:main1} and \ref{co:main2}), generalizing \cite{BD05} and \cite{How04} to higher ranks/dimensions.

\subsection*{Notation and conventions}

\begin{itemize}
  \item Put $\dN\coloneqq\{0,1,2,\dots\}$.

  \item Denote by $\te$ a vector of length $1$.

  \item In this article, $\dL$ always denotes a field embeddable into $\dC$; and $\dL_?$ stands for an extension of $\dL$ that is again embeddable into $\dC$. We denote by $\ol\dQ$ the algebraic closure of $\dQ$ in $\dC$.
\end{itemize}

We fix a CM extension $E/F$ contained in $\dC$. Denote by
\begin{itemize}
  \item $\tc\in\Aut(\dC/F)$ the complex conjugation (which generates $\Gal(E/F)$),

  \item $E^\ab$ the maximal abelian extension of $E$ in $\dC$,

  \item $\eta_{E/F}\colon F^\times\backslash\dA_F^\times\to\{\pm1\}$ the associated quadratic character,

  \item $E^{\times-}$ the kernel of the norm map $\Nm_{E/F}\colon E^\times\to F^\times$,

  \item $\tV_F^{(w)}$ the set of places of $F$ above a place $w$ of $\dQ$,

  \item $\tV_F^\fin$ the set of non-archimedean places of $F$,

  \item $\tV_F^\spl$, $\tV_F^\inert$ and $\tV_F^\ram$ the subsets of $\tV_F^\fin$ of those that are split, inert and ramified in $E$, respectively,

  \item $\tV_E^?$ the set of places of $E$ above $\tV_F^?$,

  \item $\fp_v$ the maximal ideal of $O_{F_v}$ and $q_v$ the cardinality of $O_{F_v}/\fp_v$ for every $v\in\tV_F^\fin$.
\end{itemize}

We fix a prime number $p$, and take a (possibly empty) subset $\tP\subseteq\tV_F^{(p)}\cap\tV_F^\spl$ (which will be specified at a certain point). For every finite extension $K/F$, we denote by $\Gamma_\tP^K$ the maximal torsion free quotient of the profinite completion of
\[
K^\times\backslash(\dA_K^\infty)^\times\left/\prod_{v\in\tV_F^\fin\setminus\tP}(O_K\otimes_{O_F}O_{F_v})^\times\right.,
\]
which is naturally a finite free $\dZ_p$-module.

We have a homomorphism $\Nm_{E/F}^-\colon\Gamma_\tP^E\to\Gamma_\tP^E$ sending $a$ to $a/a^\tc$, whose image is denoted by $\Gamma_\tP$. Denote by $\cE_\tP\subseteq E^\ab$ the extension of $E$ such that $\Gal(\cE_\tP/E)=\Gamma_\tP$ under the global class field theory. For every function $\chi$ on $\Gamma_\tP$, we put $\wt\chi\coloneqq\chi\circ\Nm_{E/F}^-$ as a function on $\Gamma_\tP^E$.

\begin{definition}\label{de:extension}
We say that a field extension $\cE/E$ is a \emph{$\tP$-extension} if $\cE\subseteq\cE_\tP$; $\Gal(\cE/E)$ is torsion free; and the set of primes of $E$ ramified in $\cE$ consists exactly those above $\tP$.
\end{definition}

\subsection*{Acknowledgements}

The author would like to thank Daniel Disegni and David Loeffler for helpful comments, and also the anonymous referee for careful reading and useful commments. The research of Y.~L. is supported by National Key R\&D Program of China No. 2022YFA1005300.

\section{Gan--Gross--Prasad conjecture}
\label{ss:ggp}

\begin{definition}\label{de:relevant}
Let $N$ be a positive integer.
\begin{enumerate}
  \item Put
      \[
      \Delta_{N,v}\coloneqq\prod_{i=1}^N L(i,\eta_{E/F,v}^i)\in\dQ_{>0}
      \]
      for every $v\in\tV_F^\fin$; and put
      \[
      \Delta_N\coloneqq\prod_{i=1}^NL(i,(\eta_{E/F}^\infty)^i)\in\dR_{>0}.
      \]

  \item For every $u\in\tV_E^{(\infty)}$, denote by $\Pi_u^{[N]}$ the (complex irreducible) principal series representation of $\GL_N(E_u)$ given by the characters $(\arg^{1-N},\arg^{3-N},\dots,\arg^{N-3},\arg^{N-1})$, where $\arg\colon\dC^\times\to\dC^\times$ is the \emph{argument character} defined by the formula $\arg(z)\coloneqq z/\sqrt{z\ol{z}}$.

  \item A \emph{relevant $\dL$-representation} of $\GL_N(\dA_E^\infty)$ is a representation $\Pi$ with coefficients in $\dL$ satisfying that for every embedding $\iota\colon\dL\to\dC$,
      \begin{align}\label{eq:relevant}
      \Pi^{(\iota)}\coloneqq\(\otimes_{u\in\tV_E^{(\infty)}}\Pi_u^{[N]}\)\otimes\iota\Pi
      \end{align}
      is a hermitian isobaric automorphic representation of $\GL_N(\dA_E)$ \cite{BPLZZ}*{Definition~1.5}.\footnote{In this case, it is equivalent to saying that $\Pi^{(\iota)}$ is an isobaric sum of mutually non-isomorphic conjugate self-dual cuspidal automorphic representations.}
\end{enumerate}
\end{definition}

\begin{remark}\label{re:relevant}
We have the following remarks concerning Definition \ref{de:relevant}.
\begin{enumerate}
  \item A relevant $\dL$-representation is absolutely irreducible.

  \item A relevant $\dL$-representation is defined over a number field contained in $\dL$.

  \item It suffices to check the condition for one embedding $\iota$ in Definition \ref{de:relevant}(3).

  \item For a relevant $\dL$-representation $\Pi$, the number of isobaric summands of \eqref{eq:relevant} is independent of $\iota$, which we denote as $d(\Pi)$, which is an integer between $1$ and $N$.

  \item If $\Pi$ is a relevant $\dL$-representation, then for every embedding $\iota\colon\dL\to\dC$, $\iota\Pi_u$ is tempered for every $u\in\tV_E^\fin$ \cite{Car12}*{Theorem~1.2}.
\end{enumerate}
\end{remark}

Fix a positive integer $n$ throughout the article. Let $\Pi_n$ and $\Pi_{n+1}$ be two relevant $\dL$-representations of $\GL_n(\dA_E^\infty)$ and $\GL_{n+1}(\dA_E^\infty)$, respectively. Write $\Pi\coloneqq\Pi_n\boxtimes\Pi_{n+1}$ as a representation of $\GL_n(\dA_E^\infty)\times\GL_{n+1}(\dA_E^\infty)$. By the solution of the local Gan--Gross--Prasad conjecture \cite{GGP12} by \cites{BP15,BP16}, we know that for every finite place $v$ of $F$, there exists a pair $(V_{n,v},\pi_v)$, unique up to isomorphism, in which
\begin{itemize}
  \item $V_{n,v}$ is a hermitian space over $E_v$ of rank $n$;

  \item $\pi_v$ is an irreducible admissible representation of $G_v(F_v)$ with coefficients in $\dL$ with $\Pi_v$ as its base change, satisfying $\Hom_{H_v(F_v)}(\pi_v,\dL)\neq 0$.\footnote{Indeed, one can find an $\dL$-linear model of $\pi_v$ in the space of $\dL$-valued locally constant functions on $H_v(F_v)\backslash G_v(F_v)$, using the multiplicity one property.}
\end{itemize}
Here, we have put $G_v\coloneqq\rU(V_{n,v})\times\rU(V_{n+1,v})$ with $V_{n+1,v}\coloneqq V_{n,v}\oplus E_v\cdot\te$, and denoted by $H_v\subseteq G_v$ the graph of the natural embedding $\rU(V_{n,v})\hookrightarrow\rU(V_{n+1,v})$ by realizing the source as the stabilizer of $\te$ in the target.

\begin{definition}
Put
\[
\epsilon(\Pi)\coloneqq\prod_{v\in\tV_F^\fin}\eta_{E/F}\(\det V_{n,v}\) \in \{\pm 1\}.
\]
We say that $\Pi$ is a \emph{coherent} (resp.\ \emph{incoherent}) if $\epsilon(\Pi)$ equals $1$ (resp.\ $-1$).
\end{definition}

Now suppose that $\chi\colon\Gamma_\tP\to\dL_\chi^\times$ is a finite character. Then we have a new pair of relevant $\dL_\chi$-representations $\Pi_n\otimes\wt\chi$ and $\Pi_{n+1}$ of $\GL_n(\dA_E^\infty)$ and $\GL_{n+1}(\dA_E^\infty)$, respectively. Put
\[
\Pi_\chi\coloneqq(\Pi_n\otimes\wt\chi)\boxtimes\Pi_{n+1}=\Pi\otimes(\wt\chi\circ\det\boxtimes 1)
\]
as a representation of $\GL_n(\dA_E^\infty)\times\GL_{n+1}(\dA_E^\infty)$. Since $\Gamma_\tP$ is torsion free, $\chi_v$ is trivial for every finite place $v$ of $F$ that is nonsplit in $E$. It follows that for every finite place $v$ of $F$, the corresponding pair for $\Pi_\chi$ is $(V_{n,v},(\pi_v)_{\chi_v})$, where $(\pi_v)_{\chi_v}\coloneqq\pi_v\otimes(\chi_v\circ\det\boxtimes 1)$. In particular, $\epsilon(\Pi)=\epsilon(\Pi_\chi)$.

\begin{notation}\label{no:galois}
For $N=n,n+1$, we denote by
\[
\rho_{\Pi_N}\colon\Gal(\ol\dQ/E)\to\GL_N(\dL)
\]
the Galois representation associated with $\Pi_N$ satisfying $\rho_{\Pi_N}^\vee\simeq\rho_{\Pi_N}^\tc(N-1)$.\footnote{Strictly speaking, $\rho_{\Pi_N}$ can a priori only be defined over a finite extension $\dL'$ of $\dL$ (although it has traces in $\dL$). However, enlarging $\dL$ to a finite extension will do no harm to our discussion.} We propose the following condition which will be referred later:
\begin{itemize}
  \item[($*$)] For every irreducible $\Gal(\ol\dQ/E)$-subrepresentations $\rho_n$ and $\rho_{n+1}$ of $\rho_{\Pi_n}$ and $\rho_{\Pi_{n+1}}$, respectively, $\rho_n\otimes\rho_{n+1}$ is absolutely irreducible.
\end{itemize}
\end{notation}

In the rest of this section, we review some facts about local matrix coefficient integrals appearing in the refined Gan--Gross--Prasad conjecture. Take a finite place $v$ of $F$ and suppress it in the subscripts from the notation below. In particular, $E$ is a quadratic \'{e}tale $F$-algebra.

The lemma below will be used later.

\begin{lem}\label{le:rational}
Let $r,r_1,r_2$ be positive integers.
\begin{enumerate}
  \item Let $\Pi_1$ and $\Pi_2$ be tempered (complex) irreducible admissible representations of $\GL_{r_1}(E)$ and $\GL_{r_2}(E)$, respectively. Then for every automorphism $\sigma$ of $\dC$,
      \[
      L(s,\sigma\Pi_1\times\sigma\Pi_2)=\sigma
      L(s,\Pi_1\times\Pi_2)
      \]
      holds for every $s\in\dN+1$ (resp.\ $s\in\dN+\frac{1}{2}$) when $r_1$ and $r_2$ have the same parity (resp.\ different parities).

  \item Let $\Pi$ be a tempered (complex) irreducible admissible representations of $\GL_r(E)$ and $\epsilon\in\{\pm1\}$. Then for every automorphism $\sigma$ of $\dC$,
      \[
      L(s,\sigma\Pi,\As^\epsilon)=\sigma L(s,\Pi,\As^\epsilon)
      \]
      holds for every $s\in\dN+1$.
\end{enumerate}
\end{lem}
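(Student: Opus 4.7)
The plan is to express the local $L$-factors via Langlands parameters and reduce the statement to the Galois equivariance of the local Langlands correspondence for $\GL_r$, supplemented by a parity analysis of the shifts coming from the tempered structure.

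First I would apply the local Langlands correspondence for $\GL_r$ (Harris--Taylor, Henniart, Scholze) and its compatibility with Rankin--Selberg $L$-factors to rewrite $L(s,\Pi_1\times\Pi_2)=L(s,\phi_{\Pi_1}\otimes\phi_{\Pi_2})$, where $\phi_{\Pi_i}\colon W_E\times\SL_2(\dC)\to\GL_{r_i}(\dC)$ is the Langlands parameter of $\Pi_i$. Temperedness yields a decomposition $\phi_{\Pi_i}=\bigoplus_j\phi_{i,j}\otimes\mathrm{Sp}_{m_{i,j}}$ with $\phi_{i,j}$ irreducible bounded. Applying the Clebsch--Gordan rule for the Deligne $\SL_2$ and the shift formula $L(s,\phi\otimes\mathrm{Sp}_m)=L(s+(m-1)/2,\phi)$, I would expand
\[
L(s,\Pi_1\times\Pi_2)=\prod_{j,l,k}L\bigl(s+\tfrac{m_{1,j}+m_{2,l}}{2}-1-k,\,\phi_{1,j}\otimes\phi_{2,l}\bigr),
\]
in which each factor on the right is the $L$-factor of a tensor product of bounded irreducible $W_E$-representations, with Frobenius eigenvalues algebraic of absolute value one.

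Next, Galois equivariance of LLC implies that $\phi_{\sigma\Pi_i}$ is obtained from $\phi_{\Pi_i}$ by applying $\sigma$ entry-wise, preserving the shape $(m_{i,j})$. Hence $L(t,\sigma\phi_{1,j}\otimes\sigma\phi_{2,l})=\sigma L(t,\phi_{1,j}\otimes\phi_{2,l})$ whenever $\sigma$ fixes $q^{-t}$, which happens for $t\in\dZ$. Factor-by-factor, the claim reduces to showing that $s+(m_{1,j}+m_{2,l})/2\in\dZ$ for every index $(j,l)$. The parity of $m_{1,j}+m_{2,l}$ should match $r_1+r_2\pmod{2}$ uniformly in $(j,l)$ by virtue of the rank identity $r_i=\sum_j r(\sigma_{i,j})\cdot m_{i,j}$ together with parity constraints on the cuspidal supports; this gives $s\in\dN+1$ when $r_1+r_2$ is even and $s\in\dN+\tfrac{1}{2}$ when $r_1+r_2$ is odd. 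For part~(2) I would proceed analogously using $L(s,\Pi,\As^\epsilon)=L(s,\As^\epsilon\phi_\Pi)$: the Asai parameter is a representation of the Weil group of $F$, its $L$-factor uses $q_v^{-s}\in\dQ$ for integer $s$, and the tensor-induction structure forces only integer shifts, yielding $s\in\dN+1$ for any rank.

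The hard part will be the parity bookkeeping in part~(1). The cleanest way I can see is to pass to the ``algebraic'' normalization $\Pi_i\otimes|\det|^{(r_i-1)/2}$, whose Langlands parameter has integral Frobenius weights; Galois equivariance of LLC at integer arguments is then transparent, and reversing the twist shifts the rationality locus to $\dN+1$ or $\dN+\tfrac{1}{2}$ depending on the parity of $r_1+r_2$.
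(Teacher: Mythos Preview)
Your approach is entirely different from the paper's, which argues directly with the local Rankin--Selberg zeta integrals of Jacquet--Piatetski-Shapiro--Shalika: the factor $L(s,\Pi_1\times\Pi_2)$ is the generator $P(q^{-s})^{-1}$ of the fractional ideal in $\dC(q^{-s})$ spanned by the integrals
\[
Z(s,W_1,W_2,\Phi)=\int_{U_{r_1}(E)\backslash\GL_{r_1}(E)} W_1(g)W_2(g)\Phi(e_{r_1}g)\,|\det g|^{\,s-(r_2-r_1)/2}\,\rd g,
\]
and these integrals are visibly $\sigma$-equivariant precisely when $|\det g|^{s-(r_2-r_1)/2}$ is rational-valued, i.e.\ when $s-(r_2-r_1)/2\in\dZ$. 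Part~(2) is handled identically via the Asai zeta integral. No Langlands correspondence is needed.

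Your parameter argument has two interlocking gaps. First, the parity claim is false: for $\Pi_1=\chi\boxplus\mathrm{St}_2(\chi')$ on $\GL_3(E)$ the parameter is $\chi\oplus(\chi'\otimes\mathrm{Sp}_2)$, so $m_{1,1}=1$ and $m_{1,2}=2$ already have different parities. Second, the assertion that $\phi_{\sigma\Pi_i}=\sigma(\phi_{\Pi_i})$ is incorrect in the standard normalization, because normalized parabolic induction involves $\delta_P^{1/2}$, which is not $\sigma$-stable when $\sigma(q^{1/2})=-q^{1/2}$; in the example one computes $\sigma\Pi_1=\sigma\chi\boxplus\mathrm{St}_2(\sigma\chi'\cdot\eta_0)$ with $\eta_0$ the unramified sign character. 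These two errors do in fact compensate each other (the extra $\eta_0$ on the $\mathrm{Sp}_2$-block exactly absorbs the sign coming from $\sigma(q^{-1/2})$ in the half-integer shift), so the approach can be salvaged---but only by the block-by-block sign bookkeeping your ``cleanest way'' was supposed to avoid, and the phrase ``integral Frobenius weights'' has no meaning for a general tempered parameter, whose Frobenius eigenvalues need not even be algebraic. The zeta-integral proof sidesteps all of this.
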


\begin{proof}
Denote by $e_r$ the vector $(0,\dots,0,1)$ with $r-1$ zeros.

For (1), without lost of generality, we may assume $r_1\leq r_2$ and $E$ a field. Regard $\GL_{r_1}$ as a subgroup of $\GL_{r_2}$ via the first $r_1$ coordinates. Let $U_{r_1}$ and $U_{r_2}$ be the standard upper-triangular maximal unipotent subgroups of $\GL_{r_1}$ and $\GL_{r_2}$ respectively. Choose two generic characters $\psi_1\colon U_{r_1}(E)\to\dC^\times$ and $\psi_2\colon U_{r_2}(E)\to\dC^\times$ satisfying $\psi_1(u)\psi_2(u)=1$ for $u\in U_{r_1}(E)$. For $\alpha=1,2$, denote by $\cW(\Pi_\alpha)_{\psi_\alpha}$ the Whittaker model of $\Pi_\alpha$ with respect to $(U_{r_\alpha},\psi_\alpha)$. Fix a rational Haar measure on $U_{r_1}(E)\backslash\GL_{r_1}(E)$. Denote by $\cF(E^{r_1})$ the space of Schwartz (resp.\ constant) functions on $E^{r_1}$ when $r_1=r_2$ (resp.\ $r_1<r_2$). Let $q$ be the residue cardinality of $E$.

For $W_1\in\cW(\Pi_1)_{\psi_1}$, $W_2\in\cW(\Pi_2)_{\psi_2}$, and $\Phi\in\cF(E^{r_1})$, the integral
\[
Z(s,W_1,W_2,\Phi)\coloneqq\int_{U_{r_1}(E)\backslash\GL_{r_1}(E)}W_1(g)W_2(g)\Phi(e_{r_1}g)|\det g|_F^{s-\frac{r_2-r_1}{2}}\rd g
\]
is absolutely convergent for $\RE s>0$ and has a meromorphic extension to an element in $\dC(q^{-s})$. By the local Rankin--Selberg theory \cite{JPSS83}, there exists a unique element $P_{\Pi_1\times\Pi_2}(X)\in\dC[X]$ satisfying $P_{\Pi_1,\Pi_2}(0)=1$ such that $\{Z(s,W_1,W_2,\Phi)\res W_1,W_2,\Phi\}$ is a $\dC[q^s,q^{-s}]$-submodule of $\dC(q^{-s})$ generated by $P_{\Pi_1,\Pi_2}(q^{-s})^{-1}$, which is nothing but $L(s,\Pi_1\times\Pi_2)$. In particular, $\{Z(s,W_1,W_2,\Phi)\res W_1,W_2,\Phi\}$ is independent of the choices of $\psi_1$ and $\psi_2$. Now let $\sigma$ be an automorphism of $\dC$. We have $\sigma W_\alpha\in\cW(\sigma\Pi_\alpha)_{\sigma\psi_\alpha}$ for $\alpha=1,2$, and moreover
\[
Z(s,\sigma W_1,\sigma W_2,\sigma\Phi)=\sigma Z(s,W_1,W_2,\Phi)
\]
as absolutely convergent integrals for $s\in\dN+1$ (resp.\ $s\in\dN+\frac{1}{2}$) when $r_1$ and $r_2$ have the same parity (resp.\ different parities). Since elements in $\dC(q^{-s})$ are uniquely determined by their values at an arbitrary infinite set of $s$, it follows easily that $L(s,\sigma\Pi_1\times\sigma\Pi_2)=\sigma L(s,\Pi_1\times\Pi_2)$ for the same set of $s$.

For (2), it is similar to (1) by considering the integral
\[
Z(s,W,\Phi)\coloneqq\int_{U_r(F)\backslash\GL_r(F)}W(g)\Phi(e_rg)\eta_{E/F}(\det g)^{\frac{1-\epsilon}{2}}|\det g|_F^s\rd g
\]
for $W\in\cW(\Pi)_\psi$ and $\Phi\in\cS(F^r)$.

The lemma is proved.
\end{proof}

Choose a rational Haar measure $\rd h$ on $H(F)$. Let $\chi\colon E^{\times-}\to\dL_\chi^\times$ be a finite character.

\begin{definition}\label{de:rational}
Lemma \ref{le:rational} provides us with the following definitions.
\begin{enumerate}
  \item We define $L(\tfrac{1}{2},(\Pi_n\otimes\wt\chi)\times\Pi_{n+1})\in\dL_\chi$ to be the unique element such that for every $\iota\colon\dL_\chi\to\dC$, $\iota L(\tfrac{1}{2},(\Pi_n\otimes\wt\chi)\times\Pi_{n+1})=L(\tfrac{1}{2},\iota(\Pi_n\otimes\wt\chi)\times\iota\Pi_{n+1})$.

  \item For $N=n,n+1$, we define $L(1,\Pi_N,\As^{(-1)^N})\in\dL^\times$ to be the unique element such that for every $\iota\colon\dL\to\dC$, $\iota L(1,\Pi_N,\As^{(-1)^N})=L(1,\iota\Pi_N,\As^{(-1)^N})$.
\end{enumerate}

\end{definition}

Let $\iota\colon\dL_\chi\to\dC$ be an embedding. For $\varphi\in\pi$ and $\varphi^\vee\in\pi^\vee$, the integral
\[
\alpha_\iota^\chi(\varphi,\varphi^\vee)\coloneqq
\int_{H(F)}\iota\(\langle\varphi^\vee,\pi(h)\varphi\rangle_\pi\cdot\chi(\det h)\) \rd h
\]
is absolutely convergent \cite{Har14} since $\iota\pi$ is tempered, hence defines an element in the $\dL_\chi$-vector space
\[
\alpha_\iota^\chi\in\Hom_{H(F)\times H(F)}\(\pi_\chi\boxtimes(\pi_\chi)^\vee,\dC\),
\]
where we regard $\dC$ as an $\dL_\chi$-vector space via $\iota$. We suppress $\chi$ in the notation when it is the trivial character.

\begin{lem}\label{le:matrix0}
Let $\chi\colon E^{\times-}\to\dL_\chi^\times$ be a finite character. For $\varphi\in\pi$ and $\varphi^\vee\in\pi^\vee$, there exists a unique element $\alpha^\chi(\varphi,\varphi^\vee)\in\dL_\chi$ such that for every embedding $\iota\colon\dL_\chi\to\dC$,
\[
\iota\alpha^\chi(\varphi,\varphi^\vee)=\alpha_\iota^\chi(\varphi,\varphi^\vee)
\]
holds.
\end{lem}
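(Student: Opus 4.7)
The uniqueness is immediate from the injectivity of each embedding $\iota$. For existence, my plan is to exploit the multiplicity-one aspect of the local Gan--Gross--Prasad conjecture of \cites{BP15,BP16} to reduce the problem to a single rationality check on a conveniently chosen test pair, and then propagate it using one-dimensionality of an invariant Hom space.

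As in the paper's setup, both $\Hom_{H(F)}(\pi_\chi, \dL_\chi)$ and $\Hom_{H(F)}(\pi_\chi^\vee, \dL_\chi)$ are one-dimensional over $\dL_\chi$; pick nonzero elements $\ell$ and $\ell^\vee$ in them respectively, and form
\[
\beta(\varphi,\varphi^\vee) \coloneqq \ell(\varphi) \cdot \ell^\vee(\varphi^\vee) \in \dL_\chi,
\]
giving a nonzero $(H(F) \times H(F))$-invariant $\dL_\chi$-linear functional on $\pi_\chi \boxtimes \pi_\chi^\vee$. Since the integrand of $\alpha_\iota^\chi$ can be rewritten as $\iota\langle \varphi^\vee, \pi_\chi(h)\varphi\rangle_\pi$, the complex form $\alpha_\iota^\chi$ is an $(H(F) \times H(F))$-invariant $\dC$-linear functional on $\iota\pi_\chi \boxtimes \iota\pi_\chi^\vee$. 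By local GGP multiplicity one over $\dC$ combined with faithfully flat base change from $\dL_\chi$ to $\dC$, one deduces $\Hom_{H(F) \times H(F)}(\pi_\chi \boxtimes \pi_\chi^\vee, \dL_\chi) = \dL_\chi \cdot \beta$, and hence there is a unique scalar $\lambda_\iota \in \dC^\times$ with $\alpha_\iota^\chi = \lambda_\iota \cdot \iota\beta$.

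The lemma then amounts to showing that $\lambda_\iota = \iota\lambda$ for a common $\lambda \in \dL_\chi^\times$ independent of $\iota$, whereupon $\alpha^\chi \coloneqq \lambda \cdot \beta$ satisfies the conclusion. It suffices to exhibit one pair $(\varphi_0, \varphi_0^\vee)$ with $\beta(\varphi_0, \varphi_0^\vee) \neq 0$ for which $\alpha_\iota^\chi(\varphi_0, \varphi_0^\vee) \in \iota\dL_\chi$ in a Galois-compatible way across all $\iota$. I would produce such a pair via the local Birch-type construction of \cites{KMS00,Jan11} invoked elsewhere in the paper, which supplies test vectors whose twisted matrix coefficient is algebraically controlled so that the integral collapses to a finite $\dL_\chi$-linear combination of rational volumes of open compact subgroups, giving the desired rationality manifestly. \textbf{The main obstacle} is precisely this compactification step: for merely tempered (rather than supercuspidal) $\pi$, matrix coefficients are not compactly supported on $H(F)$, so the descent to $\dL_\chi$ cannot be done naively by truncating the integral (the Galois automorphisms of $\dC$ over $\iota\dL_\chi$ do not commute with the convergence of the absolutely convergent tail); one must instead rely on the Birch-type test vectors, or alternatively on an asymptotic expansion of tempered matrix coefficients along a dominant chamber to rewrite the integral as a convergent geometric sum with $\dL_\chi$-rational ratios. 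Once the rationality is established at a single test pair, the one-dimensionality already proved propagates it to arbitrary $(\varphi, \varphi^\vee)$, completing the construction of $\alpha^\chi$.
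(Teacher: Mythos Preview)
Your strategy differs from the paper's, which is much more direct: the paper simply notes that the integrand $h\mapsto\langle\varphi^\vee,\pi(h)\varphi\rangle_\pi\cdot\chi(\det h)$ is $\dL_\chi$-valued and then invokes Casselman's asymptotic expansion of matrix coefficients \cite{Cas}*{Proposition~4.2.3~\&~Theorem~4.3.3} together with the argument in \cite{Wal03}*{\S{I.2}} to deduce that the absolutely convergent integral over $H(F)$ commutes with every $\sigma\in\Aut(\dC)$. This gives the rationality for \emph{all} pairs $(\varphi,\varphi^\vee)$ at once, with no appeal to multiplicity one. In other words, the ``alternative'' you mention in passing at the end---rewriting the integral as a convergent geometric sum with $\dL_\chi$-rational ratios via the asymptotic expansion---\emph{is} the paper's proof, and it handles every test vector uniformly.

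Your multiplicity-one reduction is a reasonable idea, but the step where you exhibit a single pair $(\varphi_0,\varphi_0^\vee)$ with manifestly $\dL_\chi$-rational integral has a real gap at nonsplit places. The Birch-type constructions of \cites{KMS00,Jan11} live entirely in the split world $\GL_n\times\GL_{n+1}$: they proceed through Whittaker models and the Rankin--Selberg zeta integral, and there is no analogous explicit test vector for a genuine (nonsplit) unitary group. So at a nonsplit $v$ you are forced back onto the asymptotic-expansion argument anyway, at which point the detour through multiplicity one buys nothing. Two smaller points: your opening claim that $\Hom_{H(F)}(\pi_\chi,\dL_\chi)$ is one-dimensional is not part of the paper's setup when $E$ is a field and $\chi$ is nontrivial (compare the extra hypothesis in Lemma~\ref{le:matrix1}); if that space vanishes then $\alpha_\iota^\chi\equiv 0$ and the lemma is trivial, but you should say so. And even at split places, the Birch vectors do not literally make the matrix coefficient compactly supported on $H(F)$; what collapses to an explicit expression there is the Whittaker integral after the factorization of Lemma~\ref{le:matrix3}, not the $H(F)$-integral itself.
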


\begin{proof}
By Casselman's lemma \cite{Cas}*{Proposition~4.2.3~\&~Theorem~4.3.3} and the argument in \cite{Wal03}*{\S{I.2}}, the absolutely convergent integral defining $\alpha_\iota^\chi$ is Galois equivariant, namely, for every automorphism $\sigma$ of $\dC$,
\[
\int_{H(F)}\sigma\iota\(\langle\varphi^\vee,\pi(h)\varphi\rangle_\pi\cdot\chi(\det h)\) \rd h
=\sigma\int_{H(F)}\iota\(\langle\varphi^\vee,\pi(h)\varphi\rangle_\pi\cdot\chi(\det h)\) \rd h
\]
holds. The lemma then follows.
\end{proof}

\begin{lem}\label{le:matrix1}
Assume $\chi$ trivial if $E$ is a field. Then the $\dL_\chi$-vector space
\[
\Hom_{H(F)\times H(F)}\(\pi_\chi\boxtimes(\pi_\chi)^\vee,\dL_\chi\)
\]
is one-dimensional of which $\alpha^\chi$ is a basis.
\end{lem}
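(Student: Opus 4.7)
The plan is to combine two ingredients: (i) local multiplicity one for the Gan--Gross--Prasad branching problem, and (ii) nonvanishing of the tempered matrix coefficient integral defining $\alpha^\chi$. The argument is cleanest after base change to $\dC$ along an arbitrary embedding $\iota\colon\dL_\chi\to\dC$; descent to $\dL_\chi$ is then automatic from the rationality already proved in Lemma~\ref{le:matrix0}.

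For the upper bound, suppose $B\in\Hom_{H(F)\times H(F)}(\iota\pi_\chi\boxtimes(\iota\pi_\chi)^\vee,\dC)$. For any fixed $\varphi^\vee$, the partial evaluation $\varphi\mapsto B(\varphi,\varphi^\vee)$ belongs to $\Hom_{H(F)}(\iota\pi_\chi,\dC)$, which by the resolution of the local Gan--Gross--Prasad conjecture (\cites{BP15,BP16} in the unitary case and the classical uniqueness of Rankin--Selberg models of Jacquet--Piatetski-Shapiro--Shalika in the split case) has dimension at most one. Fixing a nonzero $\ell$ in this line, we may write $B(\varphi,\varphi^\vee)=\ell(\varphi)\cdot c(\varphi^\vee)$; the $H(F)$-invariance of $B$ in the second slot forces $c\in\Hom_{H(F)}((\iota\pi_\chi)^\vee,\dC)$, again at most one-dimensional. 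Hence $\dim_\dC\Hom_{H(F)\times H(F)}(\iota\pi_\chi\boxtimes(\iota\pi_\chi)^\vee,\dC)\leq 1$.

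For the nonvanishing of $\alpha_\iota^\chi$, I would split into cases. If $E$ is a field, the hypothesis reduces us to $\chi=1$, so that $\alpha_\iota^\chi=\alpha_\iota$ is the untwisted matrix coefficient integral. Since $\iota\pi$ is tempered by Remark~\ref{re:relevant}(5), and since $\Hom_{H(F)}(\pi,\dL)\neq 0$ identifies $(V_n,\pi)$ as the $H$-distinguished member of its Vogan packet attached to $\iota\Pi$, Beuzart-Plessis's proof of the refined Gan--Gross--Prasad conjecture yields $\alpha_\iota\neq 0$. If $E=F\times F$, we identify $G\cong\GL_n\times\GL_{n+1}$ and $H\cong\GL_n$ diagonally; unfolding $\alpha_\iota^\chi(\varphi,\varphi^\vee)$ against the Whittaker models of $\iota\Pi_n\otimes\wt{\iota\chi}$ and $\iota\Pi_{n+1}$ identifies it, up to a nonzero constant, with the tempered local Rankin--Selberg zeta integral of \cite{JPSS83} evaluated at $s=\tfrac{1}{2}$, which is nonzero on suitable test data.

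Putting the two steps together, $\alpha_\iota^\chi$ spans the one-dimensional $\dC$-line $\Hom_{H(F)\times H(F)}(\iota\pi_\chi\boxtimes(\iota\pi_\chi)^\vee,\dC)$. Since $\alpha^\chi$ takes values in $\dL_\chi$ by Lemma~\ref{le:matrix0}, the line $\dL_\chi\cdot\alpha^\chi$ must exhaust $\Hom_{H(F)\times H(F)}(\pi_\chi\boxtimes(\pi_\chi)^\vee,\dL_\chi)$ after base change along any $\iota$, and hence already before base change. The main obstacle I anticipate is the nonvanishing step in the split case with nontrivial $\chi$: one must track carefully through the Whittaker unfolding that the twist by $\chi\circ\det$ does not accidentally force the resulting Rankin--Selberg integral to vanish at $s=\tfrac{1}{2}$, so that the integral genuinely realizes a nonzero local $L$-value.
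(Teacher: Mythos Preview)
Your proposal is correct and follows the same route as the paper, which simply reduces to $\dL_\chi=\dC$ and then cites \cite{BP16} (and \cite{SV17} for the general spherical setting) for the full statement; you have unpacked the content of those references into the two expected steps (multiplicity one for the upper bound, nonvanishing of the tempered period for the lower bound). Your residual worry about the split case with nontrivial $\chi$ is not an obstacle: after the Whittaker unfolding of Lemma~\ref{le:matrix3}, the integrals $\int W(h)\,\iota\chi(\det h)\,|\det h|^{s-1/2}\,\rd h$ span the fractional ideal $L(s,(\iota\pi_n\otimes\iota\chi)\times\iota\pi_{n+1})\cdot\dC[q^{\pm s}]$, and since the local $L$-factor is the inverse of a polynomial it is nonzero at $s=\tfrac{1}{2}$, so some test vector gives a nonzero value (alternatively, \cite{SV17} treats both the field and split cases uniformly).
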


\begin{proof}
Without lost of generality, we may assume $\dL_\chi=\dC$. Then this is proved in \cite{BP16} or, in a more general setting, \cite{SV17}.
\end{proof}

\begin{lem}\label{le:matrix2}
Suppose that $E/F$ is unramified; $\chi$ is unramified; and $V_n$ admits an integrally self-dual lattice $L_n$ such that both $\varphi$ and $\varphi^\vee$ are fixed by $K_n\times K_{n+1}$, where $K_n$ and $K_{n+1}$ are the stabilizers of $L_n$ and $L_n\oplus O_E\cdot\te$ respectively. Then
\[
\alpha^\chi(\varphi,\varphi^\vee)=\langle\varphi^\vee,\varphi\rangle_\pi\cdot\vol(K_n,\rd h)\cdot
\frac{\Delta_{n+1}\cdot L(\tfrac{1}{2},(\Pi_n\otimes\wt\chi)\times\Pi_{n+1})}
{L(1,\Pi_n,\As^{(-1)^n})L(1,\Pi_{n+1},\As^{(-1)^{n+1}})}.
\]
\end{lem}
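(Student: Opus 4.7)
The plan is to reduce to a standard unramified calculation of the local Gan--Gross--Prasad integral. By Lemma \ref{le:matrix0}, $\alpha^\chi(\varphi,\varphi^\vee)\in\dL_\chi$ is characterized by its image under every embedding $\iota\colon\dL_\chi\to\dC$, and by Definition \ref{de:rational} the quotient of $L$-factors on the right-hand side is also characterized embedding-by-embedding. It therefore suffices to fix an arbitrary $\iota\colon\dL_\chi\to\dC$ and verify
\[
\alpha_\iota^\chi(\varphi,\varphi^\vee)=\iota\langle\varphi^\vee,\varphi\rangle_\pi\cdot\vol(K_n,\rd h)\cdot
\frac{\Delta_{n+1,v}\cdot L(\tfrac{1}{2},\iota(\Pi_n\otimes\wt\chi)\times\iota\Pi_{n+1})}{L(1,\iota\Pi_n,\As^{(-1)^n})L(1,\iota\Pi_{n+1},\As^{(-1)^{n+1}})}
\]
as an equality of complex numbers, where $\iota\pi$ is now a tempered unramified irreducible admissible representation of $G(F)$ and $\varphi,\varphi^\vee$ are spherical.

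With this reduction in place, the identity is the unramified local Ichino--Ikeda identity for the unitary Gan--Gross--Prasad pair $H\subseteq G$, proved by N.~Harris (in the untwisted case) following the method of Ii and Ichino--Ikeda for orthogonal groups. The steps are as follows. First, I would use the Cartan decomposition $H(F)=\coprod_\mu K_n a_\mu K_n$, indexed by dominant coweights $\mu$ of the split torus of the quasi-split form of $H$ over $F$ (available since $E/F$ and $V_n$ are unramified, so $H$ is quasi-split and unramified), to write
\[
\alpha_\iota^\chi(\varphi,\varphi^\vee)
=\iota\langle\varphi^\vee,\varphi\rangle_\pi\cdot\sum_\mu \vol(K_n a_\mu K_n,\rd h)\cdot
\frac{\iota\langle\varphi^\vee,\pi(a_\mu)\varphi\rangle_\pi}{\iota\langle\varphi^\vee,\varphi\rangle_\pi}\cdot\iota\chi(\det a_\mu).
\]
Next, I would apply the Macdonald formula to evaluate the normalized spherical matrix coefficient $\iota\langle\varphi^\vee,\pi(a_\mu)\varphi\rangle_\pi/\iota\langle\varphi^\vee,\varphi\rangle_\pi$ in terms of the Satake parameters of $\iota\Pi_n$ and $\iota\Pi_{n+1}$, and combine it with the explicit volume formula for $\vol(K_n a_\mu K_n,\rd h)$.

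The final step is a Gindikin--Karpelevich-type summation: after summing the resulting geometric series over $\mu$ and twisting by the unramified character $\iota\chi$ (which merely shifts the Satake parameters of $\iota\Pi_n$, hence produces $\iota(\Pi_n\otimes\wt\chi)$), one obtains the quotient of $L$-factors in the desired formula, with the numerator $\Delta_{n+1,v}$ arising exactly as the product of the local $L$-factors $L(i,\eta_{E/F,v}^i)$ for $1\leq i\leq n+1$ that appear in the normalization of the Haar measure and in the Macdonald denominator. This computation is exactly the content of the unramified case of the Harris refinement of the Gan--Gross--Prasad conjecture, and the $\chi$-twist does not introduce any new difficulty since it is unramified.

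The main obstacle is purely bookkeeping: carefully tracking the normalizations of $\rd h$, of the pairing $\langle\cdot,\cdot\rangle_\pi$, and of the Macdonald/Gindikin--Karpelevich formulas so that the final answer emerges with the precise factor $\Delta_{n+1,v}\cdot\vol(K_n,\rd h)$ rather than some rescaling of it. Once the normalizations are aligned with those used in \cite{Har14}, the identity can be cited directly (with the harmless twist by the unramified character $\iota\chi$), completing the proof.
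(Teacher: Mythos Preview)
Your proposal is correct and follows the same approach as the paper: reduce to $\dL_\chi=\dC$ via Definition~\ref{de:rational} and Lemma~\ref{le:matrix0}, then invoke Harris's unramified computation \cite{Har14}*{Theorem~2.12}. The paper simply cites Harris at that point, whereas you additionally sketch the internals of his argument (Cartan decomposition, Macdonald formula, Gindikin--Karpelevich summation), but this is elaboration of the same proof rather than a different one.
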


\begin{proof}
By Definition \ref{de:rational} and Lemma \ref{le:matrix0}, we may assume $\dL_\chi=\dC$. Then this is \cite{Har14}*{Theorem~2.12}.
\end{proof}

Now suppose that $E\simeq F\times F$. Choose a Borel subgroup $B$ of $H$ such that $B_H\coloneqq B\cap H$ is a Borel subgroup of $H$, and a generic character $\psi$ of $U(F)$ that is trivial on $(U\cap H)(F)$, where $U$ denotes the unipotent radical of $B$. For every embedding $\iota\colon\dL\to\dC$, denote by $\cW(\iota\pi)_\psi$ and $\cW(\iota\pi^\vee)_{\psi^{-1}}$ the Whittaker models of $\iota\pi$ and $\iota\pi^\vee$ with respect to $(U,\psi)$ and $(U,\psi^{-1})$, respectively. Define a pairing $\vartheta\colon\cW(\iota\pi^\vee)_{\psi^{-1}}\times\cW(\iota\pi)_\psi\to\dC$ by the formula
\[
\vartheta(W^\vee,W)\coloneqq\int_{U(F)\backslash Q(F)}W^\vee(h)W(h)\rd h
\]
where $Q$ is a mirabolic subgroup of $G$ containing $U$.

\begin{lem}\label{le:matrix3}
There exists a positive rational constant $c$ depending only on the rational Haar measures on $U(F)\backslash Q(F)$, $H(F)$, and $(U\cap H)(F)$, such that for every finite character $\chi\colon E^{\times-}\to\dL_\chi^\times$, every $\iota\colon\dL_\chi\to\dC$, and every $(W,W^\vee)\in\cW(\iota\pi)_\psi\times\cW(\iota\pi^\vee)_{\psi^{-1}}$,
\begin{align*}
&\int_{H(F)}\vartheta(W^\vee,\pi(h)W)\cdot\iota\chi(\det h)\rd h \\
&=c\(\int_{(U\cap H)(F)\backslash H(F)}W(h)\cdot\iota\chi(\det h)\rd h\)
\(\int_{(U\cap H)(F)\backslash H(F)}W^\vee(h)\cdot\iota\chi(\det h)^{-1}\rd h\)
\end{align*}
in which both sides are absolutely convergent.
\end{lem}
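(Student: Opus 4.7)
In the split case $E\simeq F\times F$ I identify $G(F)=\GL_n(F)\times\GL_{n+1}(F)$, and realize $H(F)$ as the graph of the block embedding $\iota\colon\GL_n\hookrightarrow\GL_{n+1}$, $h\mapsto\diag(h,1)$. I take $U=U_n\times U_{n+1}$ for the upper-triangular unipotent and $Q=Q_n\times Q_{n+1}$ for the standard mirabolics, noting that $\iota(Q_n)\subseteq Q_{n+1}$; by Bernstein's theorem, the tempered pairing $\vartheta$ is $G(F)$-invariant. I first check that both sides of the asserted identity, regarded as bilinear forms on $\cW(\iota\pi)_\psi\times\cW(\iota\pi^\vee)_{\psi^{-1}}$, lie in $\Hom_{H(F)\times H(F)}(\pi_\chi\boxtimes(\pi_\chi)^\vee,\dC)$: a change of variable in the $H(F)$-integral of the left-hand side, combined with $G(F)$-invariance of $\vartheta$, yields the required equivariance, while the right-hand side is already a product of two Rankin--Selberg functionals transforming by $\chi^{\pm1}$ under $H(F)$. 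By Lemma~\ref{le:matrix1}, this Hom space is one-dimensional (the hypothesis ``$\chi$ trivial if $E$ is a field'' is vacuous here), so the two sides are proportional by some constant $c\in\dC$ \emph{a priori} depending on $(\pi,\chi,\iota)$.

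\textbf{Unfolding to identify the constant.} Substituting $\vartheta(W^\vee,\pi(h)W)=\int_{U(F)\backslash Q(F)}W^\vee(q)W(qh)\,dq$ into the left-hand side and interchanging the order of integration (legitimate by the temperedness of $\iota\pi$ guaranteed by Remark~\ref{re:relevant}(5) and the standard decay estimates on tempered matrix coefficients), I perform the successive changes of variable $a:=q_n h$ in the $H(F)$-integral, followed by $q_{n+1}\mapsto q_{n+1}\iota(q_n)$ in the $U_{n+1}(F)\backslash Q_{n+1}(F)$-integral; the latter is well-defined and right-measure-preserving since $\iota(q_n)\in Q_{n+1}(F)$. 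The factors $\chi(\det q_n)^{\pm1}$ cancel, and using the isomorphism $U_{n+1}(F)\backslash Q_{n+1}(F)\simeq U_n(F)\backslash\GL_n(F)$ arising from the Levi decomposition $Q_{n+1}=\iota(\GL_n)\ltimes N_{n+1}$ together with the compatibility $\psi_n\cdot(\psi_{n+1}\circ\iota)=1$ on $U_n$, the resulting triple integral reorganizes as a product of two Rankin--Selberg integrals matching the right-hand side. The constant $c$ then emerges as a product of volumes of unipotent subgroups and Jacobians of these substitutions, all of which are rational and depend only on the chosen Haar measures on $U(F)\backslash Q(F)$, $H(F)$, and $(U\cap H)(F)$.

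\textbf{Main obstacle.} The principal technical difficulty is the rigorous justification of the Fubini interchanges, since the iterated integrals are only conditionally convergent for a general tempered $\pi$; this is handled via uniform tempered estimates on matrix coefficients (in the spirit of Casselman--Wallach) together with the convergence theory of Rankin--Selberg zeta integrals developed in \cite{JPSS83}, and it crucially relies on the $G(F)$-invariance of $\vartheta$ in the strong form supplied by Bernstein's theorem for tempered representations. Granting the interchange, independence of $c$ from $\chi$ and $\iota$ is an automatic consequence of the intrinsic nature of the multiplicity-one proportionality together with the Galois-equivariance argument already used in the proof of Lemma~\ref{le:matrix0}.
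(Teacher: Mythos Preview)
The paper's own proof is a single-line citation to \cite{Zha14}*{Proposition~4.10}; your proposal is a sketch of the argument behind that citation, and the unfolding you describe (exploiting $U_{n+1}(F)\backslash Q_{n+1}(F)\simeq U_n(F)\backslash\GL_n(F)$ together with the $G(F)$-invariance of $\vartheta$) is indeed the correct mechanism.

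Two minor observations. First, the opening appeal to Lemma~\ref{le:matrix1} is redundant: once the unfolding is justified it yields the identity directly with the explicit measure-theoretic constant, so there is no need to first establish abstract proportionality with a constant $c(\pi,\chi,\iota)$ and then separately pin it down. Second, your diagnosis that the Fubini interchange is the crux is accurate and is where the substance of \cite{Zha14}*{Proposition~4.10} lies; but the resolution there proceeds by inserting $|\det h|_F^s$ and establishing the unfolded identity as an equality of meromorphic functions in a half-plane of absolute convergence, then specializing to $s=0$ where both sides are already absolutely convergent for tempered $\pi$ --- rather than via direct matrix-coefficient estimates as you suggest. The independence of $c$ from $(\pi,\chi,\iota)$ then falls out of the unfolding computation itself, and no separate Galois-equivariance argument is needed.
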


\begin{proof}
This is \cite{Zha14}*{Proposition~4.10}.
\end{proof}

\begin{proposition}\label{pr:matrix}
Suppose that $E\simeq F\times F$. There exist elements $\varphi\in\pi$ and $\varphi^\vee\in\pi^\vee$ such that for every finite unramified character $\chi\colon E^{\times-}\to\dL_\chi^\times$,
\[
\alpha^\chi(\varphi,\varphi^\vee)=\frac{\Delta_{n+1}\cdot L(\tfrac{1}{2},(\Pi_n\otimes\wt\chi)\times\Pi_{n+1})}
{L(1,\Pi_n,\As^{(-1)^n})L(1,\Pi_{n+1},\As^{(-1)^{n+1}})}
\]
holds.
\end{proposition}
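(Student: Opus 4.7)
The plan is to combine Lemma \ref{le:matrix3} with the local Rankin--Selberg theory, specifically via a Birch-type lemma, to explicitly produce the test vectors. In the split case $E \simeq F \times F$, the unitary groups trivialize: $G(F) \simeq \GL_n(F) \times \GL_{n+1}(F)$ and $\pi \simeq \pi_n \boxtimes \pi_{n+1}$, with $\pi_N \simeq \Pi_N^{(1)}$ and $\Pi_N^{(2)} \simeq \pi_N^\vee$ by conjugate self-duality. The subgroup $H(F) \simeq \GL_n(F)$ embeds via $g \mapsto (g, \iota(g))$, and the unitary determinant on $H$ becomes $g \mapsto (\det g, (\det g)^{-1}) \in E^{\times-}$. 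Consequently $\wt\chi$ on $E^\times = F^\times \times F^\times$ corresponds to $(\chi, \chi^{-1})$, yielding the factorization
\[
L\bigl(\tfrac{1}{2}, (\Pi_n \otimes \wt\chi) \times \Pi_{n+1}\bigr) = L\bigl(\tfrac{1}{2}, (\pi_n \otimes \chi) \times \pi_{n+1}\bigr)\cdot L\bigl(\tfrac{1}{2}, (\pi_n \otimes \chi)^\vee \times \pi_{n+1}^\vee\bigr),
\]
while $L(1, \Pi_N, \As^{(-1)^N}) = L(1, \pi_N \times \pi_N^\vee)$ (both Asai signs agree since $\eta_{E/F}$ is trivial in the split case).

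Next, I would apply Lemma \ref{le:matrix3} to reduce to Rankin--Selberg integrals. Writing $\langle -, -\rangle_\pi = \kappa\,\vartheta$ for the scalar $\kappa$ relating the canonical pairing to the Whittaker pairing $\vartheta$, one obtains, for any $W \leftrightarrow \varphi$ and $W^\vee \leftrightarrow \varphi^\vee$,
\[
\alpha^\chi(\varphi, \varphi^\vee) = \kappa c\cdot Z(W, \chi)\cdot Z(W^\vee, \chi^{-1}),
\]
where $Z(W, \chi) = \int_{U_n(F)\backslash\GL_n(F)} W(g, \iota(g))\,\chi(\det g)\,dg$ is the standard local $\GL_n \times \GL_{n+1}$ Rankin--Selberg zeta integral over $F$.

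Third, by the local Birch lemma of \cites{KMS00,Jan11}, one can exhibit Whittaker vectors $W, W^\vee$ (explicitly constructed as characteristic functions on specific Kirillov-model cosets) such that \emph{uniformly} for every finite unramified character $\chi$,
\[
Z(W, \chi) = L\bigl(\tfrac{1}{2}, (\pi_n \otimes \chi) \times \pi_{n+1}\bigr),\qquad Z(W^\vee, \chi^{-1}) = L\bigl(\tfrac{1}{2}, (\pi_n \otimes \chi)^\vee \times \pi_{n+1}^\vee\bigr).
\]
Their product reproduces the $E$-L-factor, giving $\alpha^\chi(\varphi, \varphi^\vee) = \kappa c\cdot L(\tfrac{1}{2}, (\Pi_n \otimes \wt\chi)\times\Pi_{n+1})$ with $\kappa c \in \dL^\times$ independent of $\chi$. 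A final rescaling of $\varphi^\vee$ by the scalar $\Delta_{n+1}/\bigl(\kappa c\cdot L(1, \Pi_n, \As^{(-1)^n}) L(1, \Pi_{n+1}, \As^{(-1)^{n+1}})\bigr)$ (nonzero by Definition \ref{de:rational}(2)) delivers the stated identity.

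The main obstacle is the third step: producing Whittaker vectors whose Rankin--Selberg integrals realize the full L-factor \emph{uniformly} in the unramified twist $\chi$, rather than only for a fixed $\chi$ as in the classical Jacquet--Piatetski-Shapiro--Shalika theory. This uniform property is precisely what \cites{KMS00,Jan11} deliver via an explicit Kirillov-model construction; the remaining bookkeeping of constants is routine and can be cross-checked against the unramified computation in Lemma \ref{le:matrix2}.
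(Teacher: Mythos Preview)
Your overall strategy is the paper's: pass to Whittaker models via Lemma \ref{le:matrix3}, produce test vectors whose Rankin--Selberg zeta integrals realize the $L$-factor, multiply, and rescale. But you have misidentified the key technical input in your third step, and you have also skipped the rationality bookkeeping.

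The uniformity in unramified $\chi$ is \emph{not} an obstacle, and it does not require any Birch lemma. The local Birch lemmas in \cites{KMS00,Jan11} (for instance \cite{Jan11}*{Corollary~2.8}) compute zeta integrals of specific translated ordinary vectors against \emph{ramified} characters of prescribed conductor; in this paper they are used in the proof of Proposition \ref{pr:ordinary}, not here. What is actually needed is only classical \cite{JPSS83}: there exist Whittaker functions $W,W^\vee$ such that
\[
\int_{(U\cap H)(F)\backslash H(F)} W(h)\,|\det h|_F^{\,s}\,\rd h = c_1\, L\bigl(\tfrac{1}{2}+s,\iota_0\pi_n\times\iota_0\pi_{n+1}\bigr)
\]
as \emph{meromorphic functions of $s$} (equivalently, as elements of $\dC(q^{-s})$), and similarly for $W^\vee$. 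Any finite unramified character $\chi$ becomes, after embedding into $\dC$, the character $|\cdot|_F^{\,s_0}$ for a suitable $s_0$; the desired identity for $\chi$ is then nothing but the specialization at $s=s_0$. So what you flag as ``the main obstacle'' is immediate once one remembers that JPSS gives an identity of rational functions, not merely a value at a point. This is precisely how the paper argues.

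A second point you omit is rationality over $\dL$. The proposition asks for $\varphi\in\pi$, i.e.\ a vector in the $\dL$-form, not just a complex Whittaker function. The paper fixes one embedding $\iota_0\colon\dL\to\dC$, chooses $W\in i_1(\pi)$ in the image of the $\dL$-rational subspace, and then shows the proportionality constant $cc_1c_2$ lies in $\iota_0\dL^\times$ by evaluating at the trivial character (using $\alpha_{\iota_0}(\varphi,\varphi^\vee)\in\iota_0\dL$ from Lemma \ref{le:matrix0} and $L(\tfrac{1}{2},\iota_0\Pi_n\times\iota_0\Pi_{n+1})\in\iota_0\dL^\times$ from Lemma \ref{le:rational}). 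After rescaling, Lemma \ref{le:matrix0} and Definition \ref{de:rational} upgrade the identity from $\iota$ extending $\iota_0$ to all embeddings $\iota\colon\dL_\chi\to\dC$. Your sketch, with its unexplained scalar $\kappa c\in\dL^\times$, does not address this.
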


\begin{proof}
Take an embedding $\iota_0\colon\dL\to\dC$. Choose $G(F)$-equivariant isomorphisms $i_1\colon\pi\otimes_{\dL,\iota_0}\dC\xrightarrow\sim\cW(\iota_0\pi)_\psi$ and $i_2\in\pi^\vee\otimes_{\dL,\iota_0}\dC\xrightarrow\sim\cW(\iota_0\pi^\vee)_{\psi^{-1}}$ such that $\iota_0\langle\varphi^\vee,\varphi\rangle_\pi=\vartheta(i_2\varphi^\vee,i_1\varphi)$ holds for all $\varphi\in\pi$ and $\varphi^\vee\in\pi^\vee$. By the definition of Rankin--Selberg $L$-factors \cite{JPSS83}, there exist elements $W\in i_1(\pi)$ and $W^\vee\in i_2(\pi^\vee)$ such that
\begin{align*}
\int_{(U\cap H)(F)\backslash H(F)}W(h)\cdot|\det h|_F^s\rd h &= c_1 L(\tfrac{1}{2}+s,\iota_0\pi_n\times\iota_0\pi_{n+1}), \\
\int_{(U\cap H)(F)\backslash H(F)}W^\vee(h)\cdot|\det h|_F^s\rd h &=c_2 L(\tfrac{1}{2}+s,\iota_0\pi_n^\vee\times\iota_0\pi_{n+1}^\vee),
\end{align*}
hold as meromorphic functions on $\dC$ for some constants $c_1,c_2\in\dC^\times$. Moreover, the integrals on the left-hand side are absolutely convergent when $\RE s\geq 0$. Put $\varphi=i_1^{-1}W$ and $\varphi^\vee=i_2^{-1}W^\vee$. By Lemma \ref{le:matrix3}, we have
\[
\alpha_{\iota_0}(\varphi,\varphi^\vee)=cc_1c_2\cdot L(\tfrac{1}{2},\iota_0\Pi_n\times\iota_0\Pi_{n+1}).
\]
Since $\alpha_{\iota_0}(\varphi,\varphi^\vee)\in\iota_0\dL$ and $L(\tfrac{1}{2},\iota_0\Pi_n\times\iota_0\Pi_{n+1})\in\iota_0\dL^\times$, we have $cc_1c_2\in\iota_0\dL^\times$. By Lemma \ref{le:rational}(2), after rescaling, we may assume
\[
cc_1c_2=\frac{\Delta_{n+1}}{L(1,\iota_0\Pi_n,\As^{(-1)^n})L(1,\iota_0\Pi_{n+1},\As^{(-1)^{n+1}})}.
\]
Then, again by Lemma \ref{le:matrix3}, we know that for every finite unramified character $\chi\colon E^{\times-}\to\dL_\chi^\times$ and every $\iota\colon\dL_\chi\to\dC$ extending $\iota_0$,
\[
\alpha_\iota^\chi(\varphi,\varphi^\vee)=\frac{\Delta_{n+1}\cdot L(\tfrac{1}{2},\iota(\Pi_n\otimes\wt\chi)\times\iota\Pi_{n+1})}
{L(1,\iota\Pi_n,\As^{(-1)^n})L(1,\iota\Pi_{n+1},\As^{(-1)^{n+1}})}
\]
holds. Finally, by Definition \ref{de:rational} and Lemma \ref{le:matrix0}, the above identity holds for every embedding $\iota\colon\dL_\chi\to\dC$.

The proposition is proved.
\end{proof}

\section{$p$-adic measure}

From now on, we assume that $\dL$ is a finite extension of $\dQ_p$, and denote by $\dL^\circ$ its ring of integers.

\begin{notation}\label{no:dagger}
For every quotient group $\Gamma$ of $\Gamma_\tP$, put
\[
\dL[[\Gamma]]^\circ\coloneqq\dL^\circ[[\Gamma]]\otimes_{\dL^\circ}\dL
\]
and denote by $-^\dag$ the involution on $\dL^\circ[[\Gamma]]$ or $\dL[[\Gamma]]^\circ$ induced by the inverse map on $\Gamma$.
\end{notation}

For every $v\in\tP$ and every positive integer $\fc_v$, we put
\[
\fU_{\fc_v}\coloneqq\left\{(x,x^{-1})\res x\in 1+\fp_v^{\fc_v}\right\}\subseteq E_v^\times.
\]
For a tuple $\fc=(\fc_v)_{v\in\tP}$ of positive integers indexed by $\tP$, we put
\[
\fU_\fc\coloneqq\prod_{v\in\tP}\fU_{\fc_v}\subseteq\prod_{v\in\tP}E_v^\times.
\]
Then $\Nm_{E/F}^-$ induces a homomorphism
\[
\fU_\fc\to\Gamma_\tP
\]
whose kernel is finite, and trivial when $|\fc|\coloneqq\sum_v\fc_v$ is large enough. We define a \emph{$\fc$-cell} of $\Gamma_\tP$ to be a $\fU_\fc$-orbit in $\Gamma_\tP$. Then for every given $\fc$, all $\fc$-cells give a disjoint cover of $\Gamma_\tP$ by open compact subsets.

In what follows, we will frequently choose a collection $\varpi=(\varpi_v)_{v\in\tP}$ of uniformizers of $F_v$ for $v\in\tP$. For $\fc$ as above, we put
\[
\varpi^\fc\coloneqq(\varpi_v^{\fc_v})_{v\in\tP}\in\prod_{v\in\tP}(O_{F_v}\cap F_v^\times).
\]

In this article, we will encounter measures on $\Gamma_\tP$ valued in a finite-dimensional $\dL$-vector space $\dV$, whose definition we now review. A \emph{distribution} on $\Gamma_\tP$ valued in $\dV$ is an assignment
\[
\bmu\colon\{\text{open compact subsets of $\Gamma_\tP$}\}\to\dV
\]
that is additive. It is clear that in the above definition, we may replace the source by the set of all $\fc$-cells of $\Gamma_\tP$ for all $\fc$. We say that the distribution $\bmu$ is a \emph{measure} if it is bounded, namely, there exists an $\dL^\circ$-lattice $\dV^\circ$ of $\dV$ such that the range of $\bmu$ is contained in $\dV^\circ$.

For a measure $\bmu$ valued in $\dV$, we may evaluate it on every continuous character $\chi\colon\Gamma_\tP\to\dL_\chi^\times$ (for a $p$-adic field extension $\dL_\chi/\dL$) by the formula
\[
\bmu(\chi)\coloneqq\int_{\Gamma_\tP}\chi\rd\bmu\in\dV\otimes_\dL\dL_\chi
\]
(as the limit of Riemann sums over $\fc$-cells for all $\fc$). By evaluating at all finite characters of $\Gamma_\tP$, we obtain a map
\[
\{\text{measures on $\Gamma_\tP$ valued in $\dV$}\}\to \dL[[\Gamma_\tP]]^\circ\otimes_\dL\dV,
\]
which is well-known to be a bijection. As consequences, we have
\begin{itemize}
  \item A measure is determined by its values on the set of finite characters of $\Gamma_\tP$ that are ramified at every place above $\tP$.

  \item When $\dV$ is a (commutative) $\dL$-algebra, the set of measures on $\Gamma_\tP$ valued in $\dV$ is naturally a (commutative) $\dL$-algebra.
\end{itemize}

\section{Ordinary condition}
\label{ss:ordinary}

Take an element $v\in\tV_F^{(p)}\cap\tV_F^\spl$ and suppress it in the subscripts from the notation below. In particular, $E\simeq F\times F$.

\begin{definition}\label{de:standard}
An isomorphism $G\simeq\GL_{n,F}\times\GL_{n+1,F}$ is called \emph{standard} if the subgroup $H$ is identified with $(h,\diag(h,1))$ for $h\in\GL_{n,F}$.
\end{definition}

We fix a standard isomorphism $G\simeq\GL_{n,F}\times\GL_{n+1,F}$ (which exists). For $N=n,n+1$, denote by $B_N$ and $U_N$ the upper-triangular Borel and unipotent subgroups of $\GL_N$, respectively; for every positive integer $r$, denote by $I_N^{(r)}$ the inverse image of $B_N(O_F/\fp^r)$ under the reduction map $\GL_N(O_F)\to\GL_N(O_F/\fp^r)$. Put $B\coloneqq B_n\times B_{n+1}$, $U\coloneqq U_n\times U_{n+1}$, and $I^{(r)}\coloneqq I_n^{(r)}\times I_{n+1}^{(r)}$. We also write $\pi=\pi_n\boxtimes\pi_{n+1}$ in which both $\pi_n$ and $\pi_{n+1}$ have coefficients in $\dL$ with base change $\Pi_n$ and $\Pi_{n+1}$, respectively.

For $N=n,n+1$ and a tuple $\mu=(\mu_1,\dots,\mu_N)$ of admissible characters $F^\times\to\dL^\times$, we have an induced character $\mu^\natural$ of $(F^\times)^N$ given by
\[
\mu^\natural(x_1,\dots,x_N)=\prod_{i=1}^N\mu_i(x_i)|x_i|_F^{N-i}
\]
hence a character of $B_N(F)$ by inflation, and the unnormalized principal series
\[
\cI(\mu)\coloneqq\left\{f\colon\GL_N(F)\to\dL\text{ locally constant}\left| f(bg)=\mu^\natural(b)f(g),\forall b\in B_N(F),g\in\GL_N(F)\right.\right\}
\]
as an admissible representation of $\GL_N(F)$ via right translation.

\begin{definition}\label{de:ordinary}
For $N=n,n+1$, we say that $\Pi_N$ is \emph{ordinary} if there exists a (unique) tuple $\mu=(\mu_1,\dots,\mu_N)$ of admissible characters $F^\times\to\dL^\times$ satisfying $|x|_F^{i-1}\mu_i(x)\in \dL^{\circ\times}$ for $1\leq i\leq N$ and every $x\in F^\times$ such that $\pi_N$ is a subrepresentation of $\cI(\mu)$; we say that $\Pi_N$ is \emph{semi-stably ordinary} if furthermore $\mu_i$ are all unramified.

We say that $\Pi$ is (semi-stably) ordinary if both $\Pi_n$ and $\Pi_{n+1}$ are.
\end{definition}

\begin{remark}\label{re:ordinary}
Note that if $\pi_N$ satisfies the property in Definition \ref{de:ordinary}, then so does $\pi_N^\vee$ with respect to the tuple $\check\mu\coloneqq(|\;|_F^{1-N}\mu_N^{-1},\dots,|\;|_F^{1-N}\mu_1^{-1})$.
\end{remark}

For $N=n,n+1$ and an element $x\in O_F\cap F^\times$, we put $[x]_N\coloneqq\diag(x^{N-1},\dots,x,1)\in\GL_N(F)$ and define an operator $\tV_N^x$ on $\pi_N^{U_N(O_F)}$ as
\begin{align*}
\tV_N^x&\coloneqq\sum_{u\in U_N(O_F)/(U_N(O_F)\cap [x]_N U_N(O_F)[x]_N^{-1})}\pi_N(u [x]_N).
\end{align*}

\begin{lem}\label{le:ordinary}
Suppose that $\Pi_N$ is ordinary for some $N\in\{n,n+1\}$. There exists a unique up to scalar nonzero element $\varphi_N\in\pi_N^{U_N(O_F)}$ satisfying that
\[
\tV_N^x\varphi_N=\(\prod_{m=1}^{N-1}\prod_{i=1}^m|x|_F^{i-1}\mu_i(x)\)\varphi_N
\]
holds for every $x\in O_F\cap F^\times$. Here, $\mu$ is the tuple of characters in Definition \ref{de:ordinary}.
\end{lem}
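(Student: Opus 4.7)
The plan is to realize $\varphi_N$ explicitly inside the principal series model $\pi_N \hookrightarrow \cI(\mu)$ as an Iwahori-level fixed vector, to compute its $\tV_N^x$-eigenvalue by a direct calculation at the identity, and then to invoke Casselman's comparison between $U_N(O_F)$-invariants and the Jacquet module $(\pi_N)_{U_N}$ in order to obtain both membership in $\pi_N$ and uniqueness, using that the target eigenvalue is a $p$-adic unit by the ordinarity hypothesis.

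For the construction, choose $r \geq 1$ large enough that each $\mu_i$ factors through $O_F^\times/(1+\fp^r)$, and let $\varphi_N^\circ \in \cI(\mu)^{I_N^{(r)}}$ be the unique function supported on $B_N(F) \cdot I_N^{(r)}$ whose restriction to $I_N^{(r)}$ sends $k$ to $\mu^\natural$ evaluated on the diagonal projection of $k$ in $T_N(O_F/\fp^r)$. Since $U_N(O_F) \subseteq I_N^{(r)}$, the vector $\varphi_N^\circ$ is in particular $U_N(O_F)$-fixed. Evaluating at $g=1$ and using that each product $u[x]_N$ for $u \in U_N(O_F)$ is upper triangular, hence lies in $B_N(F) \subseteq B_N(F) \cdot I_N^{(r)}$, one obtains
\begin{equation*}
(\tV_N^x \varphi_N^\circ)(1) = [U_N(O_F) : [x]_N U_N(O_F) [x]_N^{-1}] \cdot \mu^\natural([x]_N),
\end{equation*}
since $\varphi_N^\circ(u[x]_N) = \mu^\natural([x]_N)$ for every $u$. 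The coset index equals $|x|_F^{-\sum_{i<j}(j-i)} = \delta_{B_N}([x]_N)^{-1}$, and a routine expansion of $\mu^\natural([x]_N) = \prod_{i=1}^N \mu_i(x)^{N-i} |x|_F^{(N-i)^2}$ together with $\delta_{B_N}([x]_N) = |x|_F^{\sum_{i<j}(j-i)}$ identifies the product $\delta_{B_N}([x]_N)^{-1} \mu^\natural([x]_N)$ with $\prod_{m=1}^{N-1} \prod_{i=1}^m |x|_F^{i-1} \mu_i(x)$, the claimed eigenvalue $c$.

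To upgrade the pointwise identity to the global eigenequation $\tV_N^x \varphi_N^\circ = c \cdot \varphi_N^\circ$, to establish that $\varphi_N^\circ$ actually lies in the subrepresentation $\pi_N$, and to deduce uniqueness in $\pi_N^{U_N(O_F)}$, the plan is to invoke Casselman's comparison theorem in the form underlying Hida's ordinary projector. Because the ordinarity condition $|x|_F^{i-1} \mu_i(x) \in \dL^{\circ\times}$ forces $c$ to be a $p$-adic unit, Casselman's theorem provides a canonical isomorphism from the $\tV_N^x = c$ eigenspace in $\cI(\mu)^{U_N(O_F)}$ onto the $\mu^\natural$-isotypic component of the Jacquet module $\cI(\mu)_{U_N}$. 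This $\mu^\natural$-component is one-dimensional; moreover, by the very choice of the ordered tuple $\mu = (\mu_1, \dots, \mu_N)$ realizing the ordinarity of $\Pi_N$, it is contained in the submodule $(\pi_N)_{U_N} \hookrightarrow \cI(\mu)_{U_N}$. Since Casselman's isomorphism is compatible with the inclusion $\pi_N \hookrightarrow \cI(\mu)$, the vector $\varphi_N^\circ$ therefore lies in $\pi_N^{U_N(O_F)}$ and is the unique (up to scalar) element of it with eigenvalue $c$.

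The main obstacle will be making Casselman's comparison precise in the $U_N(O_F)$-invariant setting rather than the more standard full Iwahori-invariant setting, in particular pinning down the correct normalization identifying $\tV_N^x$ with the $T_N(F)$-action on the Jacquet module via $[x]_N$, and verifying that $\mu^\natural$ appears with multiplicity one inside $(\pi_N)_{U_N}$ regardless of whether $\cI(\mu)$ itself is irreducible; the latter follows from the ordering imposed by the ordinarity condition, which singles out $\mu^\natural$ as a distinguished weight in $\cI(\mu)_{U_N}$.
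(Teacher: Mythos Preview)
Your approach is correct and rests on the same two ingredients as the paper's proof: Hida/Casselman's identification of unit-eigenvalue $\tV_N^x$-eigenspaces in $\pi_N^{U_N(O_F)}$ with weight spaces in the Jacquet module $(\pi_N)_{U_N}$, and the fact that $\mu^\natural$ occurs there with multiplicity one. The paper's argument is simply more economical: it observes that $\mu$ is regular (the ordinarity condition forces the $\mu_i(\varpi)$ to have pairwise distinct $p$-adic valuations, hence the $\mu_i$ are distinct), cites Hida's Corollary~5.5 directly, and then invokes Frobenius reciprocity for the embedding $\pi_N\hookrightarrow\cI(\mu)$ to produce a nonzero map $(\pi_N)_{U_N}\to\mu^\natural$, which by semisimplicity of $\cI(\mu)_{U_N}$ (regularity) gives the one-dimensional summand.

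Two comments on your write-up. First, once you invoke Casselman/Hida, your explicit construction of $\varphi_N^\circ$ and the eigenvalue computation at $g=1$ become redundant: the comparison already hands you both the eigenvector and its eigenvalue from the Jacquet-module side, so the first half of your argument can be dropped. Second, your justification that $\mu^\natural$ lies in $(\pi_N)_{U_N}$ (``by the very choice of the ordered tuple $\mu$'', or ``the ordering imposed by the ordinarity condition'') is the one genuinely imprecise step; the clean reason is exactly Frobenius reciprocity as above, and you should also state explicitly why $\mu$ is regular, since your one-dimensionality claim for the $\mu^\natural$-component of $\cI(\mu)_{U_N}$ relies on it.
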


\begin{proof}
Since $\mu_1\boxtimes\cdots\boxtimes\mu_N$ is a regular character of $B_N(F)/U_N(F)$, the Jacquet module (with respect to $B_N$) $\cI(\mu)_{B_N}$ of $\cI(\mu)$ is a direct sum of $N$ distinct characters of $B_N(F)/U_N(F)$. By \cite{Hid98}*{Corollary~5.5}, it suffices to show that $\pi_{B_N}$ contains the character $\mu^\natural$, which follows from the Frobenius reciprocity law. The lemma is proved.
\end{proof}

\begin{definition}\label{de:ordinary1}
We call the one-dimensional $\dL$-subspace of $\pi_N^{U_N(O_F)}$ generated by $\varphi_N$ in Lemma \ref{le:ordinary} the \emph{ordinary line} of $\pi_N$, and a nonzero element of it an \emph{ordinary vector}. When $\Pi$ is ordinary, we have the obvious notion of ordinary line and ordinary vectors, which are contained in $\pi^{U(O_F)}=\pi_n^{U_n(O_F)}\otimes_\dL\pi_{n+1}^{U_{n+1}(O_F)}$.
\end{definition}

\begin{remark}
The notion of being (semi-stably) ordinary and the characters in Definition \ref{de:ordinary} are intrinsic, namely, they do not depend on the choice of the standard isomorphism $G\simeq\GL_{n,F}\times\GL_{n+1,F}$. However, the notion of ordinary line and vectors clearly depends on such choice.
\end{remark}

\begin{lem}\label{le:ordinary2}
Suppose that $\Pi_N$ is ordinary for some $N\in\{n,n+1\}$. For ordinary vectors $\varphi_N\in\pi_N^{U_N(O_F)}$ and $\varphi^\vee_N\in(\pi_N^\vee)^{U_N(O_F)}$, we have $\langle\varphi^\vee_N,\varphi_N\rangle_{\pi_N}\neq 0$.
\end{lem}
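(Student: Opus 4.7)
My plan is to reduce the non-vanishing of $\langle\varphi_N^\vee,\varphi_N\rangle_{\pi_N}$ to a Jacquet-module pairing via an iterative identity followed by Casselman's asymptotic formula for matrix coefficients. The key observation is that $[\varpi]_N=\diag(\varpi^{N-1},\dots,\varpi,1)$ lies in the strictly negative Weyl chamber of $T_N(F)$, since conjugation by it scales the $(i,j)$-entry of an upper unipotent by $\varpi^{j-i}$; hence powers of $\pi_N([\varpi]_N)$ push vectors toward the Jacquet quotient in a controlled way.

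First I would establish an iterative identity. Expanding $\tV_N^\varpi=\sum_u\pi_N(u[\varpi]_N)$ over $u\in U_N(O_F)/[\varpi]_N U_N(O_F)[\varpi]_N^{-1}$ and using that $\pi_N^\vee(u^{-1})\varphi_N^\vee=\varphi_N^\vee$ since $u^{-1}\in U_N(O_F)$, one obtains
\[
\lambda_N\cdot\langle\varphi_N^\vee,\varphi_N\rangle_{\pi_N}=q_\varpi\cdot\langle\varphi_N^\vee,\pi_N([\varpi]_N)\varphi_N\rangle_{\pi_N},
\]
where $q_\varpi=[U_N(O_F):[\varpi]_N U_N(O_F)[\varpi]_N^{-1}]$. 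A direct telescoping (rewriting $u_1[\varpi]_N u_2[\varpi]_N\cdots u_k[\varpi]_N$ as $v\cdot[\varpi]_N^k$ with $v$ ranging over $U_N(O_F)/[\varpi]_N^k U_N(O_F)[\varpi]_N^{-k}$, index $q_\varpi^k$) identifies $(\tV_N^\varpi)^k$ with the analogous sum at level $k$; applying the same eigenvalue/invariance argument then yields
\[
\langle\varphi_N^\vee,\pi_N([\varpi]_N^k)\varphi_N\rangle_{\pi_N}=(\lambda_N/q_\varpi)^k\cdot\langle\varphi_N^\vee,\varphi_N\rangle_{\pi_N}\qquad(k\geq 0).
\]

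Next I would invoke Casselman's asymptotic formula (cf.\ \cite{Cas}*{Proposition~4.2.3~\&~Theorem~4.3.3}): since $[\varpi]_N$ is strictly contracting on $U_N$, for $k$ sufficiently large
\[
\langle\varphi_N^\vee,\pi_N([\varpi]_N^k)\varphi_N\rangle_{\pi_N}=\mu^\natural([\varpi]_N^k)\cdot\langle\bar\varphi_N^\vee,\bar\varphi_N\rangle_{\mathrm{can}},
\]
where $\bar\varphi_N\in(\pi_N)_{U_N}$ and $\bar\varphi_N^\vee\in(\pi_N^\vee)_{U_N^-}$ are the Jacquet-module images and $\langle\cdot,\cdot\rangle_{\mathrm{can}}$ is Casselman's canonical pairing. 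By the proof of Lemma \ref{le:ordinary} (Frobenius reciprocity combined with the regularity of $\mu$ via \cite{Hid98}*{Corollary~5.5}), $\bar\varphi_N$ is nonzero, lying in the $\mu^\natural$-eigenline of $(\pi_N)_{U_N}$. An analogous analysis of $\pi_N^\vee$ using Remark \ref{re:ordinary}, combined with the semisimple structure of the Jacquet module of a subrepresentation of the regular principal series $\cI(\mu)$, shows that $\bar\varphi_N^\vee$ is nonzero in the character line paired non-trivially with $\mu^\natural$ under $\langle\cdot,\cdot\rangle_{\mathrm{can}}$. The Jacquet pairing is therefore nonzero, and combined with the iterative formula and $\lambda_N\in\dL^{\circ\times}$ this forces $\langle\varphi_N^\vee,\varphi_N\rangle_{\pi_N}\neq 0$.

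The main technical obstacle is the last step: since $\bar\varphi_N^\vee$ lives in the \emph{opposite} Jacquet module $(\pi_N^\vee)_{U_N^-}$, whereas the characterization of $\varphi_N^\vee$ as an eigenvector of $\tV_N^\varpi$ naturally identifies its image in the \emph{standard} Jacquet module $(\pi_N^\vee)_{U_N}$ (as the $\check\mu^\natural$-eigenline, via Remark \ref{re:ordinary}), one must track the character of $\bar\varphi_N^\vee$ across the long intertwining operator relating the two Jacquet quotients. Concretely, one needs to single out, among the Weyl-conjugates of $\check\mu^\natural$ appearing as characters of $(\pi_N^\vee)_{U_N^-}$, the summand carrying the opposite-Jacquet image of the ordinary vector and verify that it is Casselman-dual to $\mu^\natural$. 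The regularity of $\mu$ (which ensures simple multiplicities in both Jacquet modules) makes this matching essentially a bookkeeping question on normalizations, but it is where the real care is required.
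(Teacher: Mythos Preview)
Your approach via Casselman's asymptotic formula is genuinely different from the paper's, and the obstacle you flag in the last paragraph is real and not fully resolved. The issue is that knowing $\varphi_N^\vee$ projects to the $\check\mu^\natural$-line in $(\pi_N^\vee)_{U_N}$ does not directly tell you which component of $(\pi_N^\vee)_{U_N^-}$ carries its image; passing between the two Jacquet quotients via the long intertwining operator is delicate precisely because $\pi_N^\vee$ is only a subrepresentation of the principal series, so the intertwiner may have kernel on some Weyl-conjugate lines. Calling this ``bookkeeping'' understates the difficulty: without an independent argument you cannot exclude that $\bar\varphi_N^\vee$ lands in a character line orthogonal to $\mu^\natural$ under Casselman's pairing, in which case your asymptotic formula gives $0=0$ and nothing is proved.

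The paper bypasses this entirely with a purely algebraic argument. One observes (via \cite{Hid98}) that $\langle\;,\;\rangle_{\pi_N}$ restricts to a \emph{perfect} pairing between the $\tV_N^\varpi$-invertible parts $(\pi_N^\vee)^{\mathrm{ss}}$ and $(\pi_N)^{\mathrm{ss}}$, each of which decomposes into one-dimensional $\tV_N^\varpi$-eigenspaces indexed by a common set $\mathfrak{S}$ of Weyl elements. The adjoint relation $\langle\tV_N^\varpi\,\cdot\,,\,\cdot\,\rangle_{\pi_N}=\langle\,\cdot\,,\varpi^{1-N}\tV_N^\varpi\,\cdot\,\rangle_{\pi_N}$ forces eigenspaces with mismatched indices to be orthogonal; since the pairing is perfect, the diagonal pairings must all be nonzero, and in particular $\langle\varphi_N^\vee,\varphi_N\rangle_{\pi_N}\neq 0$. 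This avoids any mention of the opposite Jacquet module or intertwining operators, which is exactly what makes it cleaner than your route.
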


\begin{proof}
Fix a uniformizer $\varpi$ of $F$. Denote by $(\pi_N)^{\r{ss}}$ the invertible part of $\pi_N^{U_N(O_F)}$ with respect to the operator $\tV^\varpi_N$ (see \cite{Hid98}*{\S5.2}). Then $\langle\;,\;\rangle_{\pi_N}$ restricts to a perfect pairing on $(\pi_N^\vee)^{\r{ss}}\times(\pi_N)^{\r{ss}}$. Moreover, by \cite{Hid98}*{Proposition~5.4~\&~Corollary~5.5}, there exists a group $\fS$ consisting of permutations on $\{1,\dots,N\}$ such that $(\pi_N)^{\r{ss}}$ and $(\pi_N^\vee)^{\r{ss}}$ are spanned by $\{\varphi_{N,\sigma}\res\sigma\in\fS\}$ and $\{\varphi^\vee_{N,\sigma}\res\sigma\in\fS\}$ in which $\varphi_{N,\sigma}$ and $\varphi^\vee_{N,\sigma}$ are nonzero eigenvectors of $\tV^\varpi_N$ with eigenvalues
\[
\prod_{m=1}^{N-1}\prod_{i=1}^m|\varpi|_F^{i-1}\mu_{\sigma(i)}(\varpi),\quad
\prod_{m=1}^{N-1}\prod_{i=1}^m|\varpi|_F^{i-1}\check\mu_{\sigma(i)}(\varpi),
\]
respectively. Now since $\langle\tV^\varpi_N\;,\;\rangle_{\pi_N}=\langle\;,\varpi^{1-N}\tV^\varpi_N\;\rangle_{\pi_N}$, it follows that $\langle\varphi^\vee_{N,\sigma_2},\varphi_{N,\sigma_1}\rangle_{\pi_N}=0$ if $\sigma_1\neq\sigma_2$. In particular, $\langle\varphi^\vee_{N,1},\varphi_{N,1}\rangle_{\pi_N}\neq 0$, which proves the lemma.
\end{proof}

\begin{lem}\label{le:ordinary1}
Suppose that $\Pi_N$ is ordinary for some $N\in\{n,n+1\}$. The following are equivalence:
\begin{enumerate}
  \item $\Pi_N$ is semi-stably ordinary.

  \item The ordinary line is contained in $\pi_N^{I_N^{(r)}}$ for some positive integer $r$.

  \item The ordinary line is contained in $\pi_N^{I_N^{(1)}}$.
\end{enumerate}
\end{lem}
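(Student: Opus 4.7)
The implication $(3)\Rightarrow(2)$ is immediate (take $r=1$), so the work lies in proving $(2)\Rightarrow(1)$ and $(1)\Rightarrow(3)$. Both directions hinge on a single observation: since the diagonal torus $T_N(O_F)$ is contained in $B_N(O_F)\subseteq I_N^{(r)}$, the key is to show that $T_N(O_F)$ acts on the one-dimensional ordinary line of Lemma \ref{le:ordinary} through the character $\mu^\natural|_{T_N(O_F)}=\prod_{i=1}^N\mu_i|_{O_F^\times}$, and then to observe that unramifiedness of every $\mu_i$ is equivalent to triviality of this character.

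To justify the identification of the $T_N(O_F)$-character, note that $T_N(O_F)$ normalizes $U_N(O_F)$ and centralizes $[x]_N$, so a direct reparametrization of the defining sum shows that $T_N(O_F)$ commutes with $\tV_N^x$; hence $T_N(O_F)$ preserves the one-dimensional ordinary eigenspace and acts there by some character. To pin down this character, I would invoke the $T_N(O_F)$-equivariant Jacquet quotient $\pi_N^{U_N(O_F)}\to(\pi_N)_{U_N}$: by the argument in the proof of Lemma \ref{le:ordinary} (via \cite{Hid98}*{Corollary~5.5}), the ordinary vector has nonzero image in the $\mu^\natural$-isotype of the Jacquet module, so the $T_N(O_F)$-character on the ordinary line is exactly $\mu^\natural|_{T_N(O_F)}$. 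The direction $(2)\Rightarrow(1)$ is then immediate: $I_N^{(r)}$-fixity of the ordinary line forces $\mu^\natural|_{T_N(O_F)}$ to be trivial, whence every $\mu_i$ is unramified.

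For $(1)\Rightarrow(3)$, assume $\mu$ is unramified. Applying Lemma \ref{le:ordinary} to $\cI(\mu)$ itself (which is clearly ordinary with inducing data $\mu$), the ordinary line in $\cI(\mu)^{U_N(O_F)}$ is one-dimensional, so it must coincide with the ordinary line of $\pi_N\subseteq\cI(\mu)$; it therefore suffices to produce a nonzero ordinary eigenvector inside $\cI(\mu)^{I_N^{(1)}}$. The Bruhat decomposition $G=\bigsqcup_{w\in W}B_NwI_N^{(1)}$ (for $W$ the finite Weyl group of $\GL_N$), together with the triviality of $\mu^\natural$ on $B_N(O_F)$, produces a natural basis $\{f_w\}_{w\in W}$ of $\cI(\mu)^{I_N^{(1)}}$ of dimension $N!$. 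By the classical Casselman--Borel theorem (see, e.g., \cite{Cas}), the natural $T_N$-equivariant map $\cI(\mu)^{I_N^{(1)}}\to\cI(\mu)_{U_N}$ is an isomorphism; since the $\mu_i$ are pairwise distinct (as used in the proof of Lemma \ref{le:ordinary}), the target decomposes as the direct sum of the $N!$ distinct Weyl translates of $\mu^\natural$, so the $\mu^\natural$-isotype is one-dimensional. Transporting back, the ordinary eigenspace for $\tV_N^\varpi$ in $\cI(\mu)^{I_N^{(1)}}$ is one-dimensional, and by the uniqueness from Lemma \ref{le:ordinary} (applied to $\cI(\mu)$) it must agree with the ordinary line of $\cI(\mu)^{U_N(O_F)}$, which is therefore contained in $\cI(\mu)^{I_N^{(1)}}$. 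Intersecting with $\pi_N$ gives (3). The main technical delicacy is the matching of the ordinary eigenvalue of $\tV_N^\varpi$ with the $T_N$-action by $\mu^\natural([\varpi]_N)$ on the $\mu^\natural$-isotype of the Jacquet module, which requires careful bookkeeping of the normalization coming from the reparametrization defining $\tV_N^x$; this is standard in Hida's framework \cite{Hid98}.
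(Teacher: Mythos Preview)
Your argument is correct, but it follows a genuinely different route from the paper's.

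For $(2)\Rightarrow(1)$, the paper does not use the $T_N(O_F)$-action. Instead it invokes a finer family of Hecke operators $\tV_{N,m}^\varpi$ (one for each $0\leq m\leq N$) from \cite{Jan15}*{Lemma~3.2}, observes that the ordinary vector is an eigenvector for each of them with eigenvalue $\prod_{i=1}^m|\varpi|_F^{i-1}\mu_i(\varpi)$, and then argues that $I_N^{(r)}$-fixity forces these eigenvalues to be independent of the choice of uniformizer $\varpi$, whence each partial product $\mu_1\cdots\mu_m$ is unramified. Your torus argument is more conceptual and avoids the extra citation, though it does rely on knowing that the ordinary vector has nonzero image in the $\mu^\natural$-isotype of the Jacquet module (which is indeed what \cite{Hid98}*{Corollary~5.5} gives).

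For $(1)\Rightarrow(3)$, the paper proceeds in two steps: first it asserts that the preceding discussion already gives $(1)\Rightarrow(2)$, i.e.\ the ordinary line lies in $\pi_N^{I_N^{(r)}}$ for some $r$; then it appeals to \cite{Jan15}*{Proposition~3.4}, which says that any $I_N^{(r)}$-fixed vector annihilated by the Hecke polynomial of $\pi_N$ already lies in $\pi_N^{I_N^{(1)}}$. Your approach instead passes to the ambient principal series $\cI(\mu)$ and uses the Casselman--Borel isomorphism $\cI(\mu)^{I_N^{(1)}}\xrightarrow{\sim}\cI(\mu)_{U_N}$ to exhibit an Iwahori-fixed ordinary eigenvector directly. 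This is a clean alternative; one small caution is that Lemma~\ref{le:ordinary} is stated only for the irreducible $\pi_N$, so when you apply it to $\cI(\mu)$ you are really re-running its proof (which only needs regularity of $\mu$, hence goes through). The ``technical delicacy'' you flag---that the Casselman preimage of the $\mu^\natural$-isotype is a simultaneous $\tV_N^x$-eigenspace for \emph{all} $x$, not merely for one $\varpi$---is genuinely needed, since the $\tV_N^\varpi$-eigenvalues attached to distinct Weyl translates of $\mu^\natural$ can in principle coincide at a single uniformizer; but this is exactly what Hida's compatibility \cite{Hid98} provides.

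In short: the paper outsources both nontrivial implications to \cite{Jan15}, while you reprove them from the Hida/Casselman framework. Your route is more self-contained; the paper's is shorter on the page.
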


\begin{proof}
The implication $(3)\Rightarrow(2)$ is trivial.

For $(2)\Rightarrow(1)$, let $\varphi_N$ be an ordinary vector fixed by $I_N^{(r)}$. Then it is also the eigenvector of the operators $\tV_{N,m}^\varpi$ in \cite{Jan15}*{Lemma~3.2} for $0\leq m\leq N$ (with respect to a uniformizer $\varpi$) with the eigenvalue $\prod_{i=1}^m|\varpi|_F^{i-1}\mu_i(\varpi)$. Now since $\varphi_N$ is fixed by $I_N^{(r)}$, the eigenvalues are independent of the choice of $\varpi$, which implies that $\mu_1,\mu_1\mu_2,\dots,\mu_1\cdots\mu_N$ are all unramified characters, that is, $\Pi_N$ is semi-stably ordinary.

For $(1)\Rightarrow(3)$, note that the above discussion also shows $(1)\Rightarrow(2)$, namely, the ordinary line is contained in $\pi_N^{I_N^{(r)}}$ for some $r\geq 1$. Then the statement follows from \cite{Jan15}*{Proposition~3.4} by noting that elements in $\pi_N^{I_N^{(r)}}$ that are annihilated by the Hecke polynomial of $\pi_N$ are already contained in $\pi_N^{I_N^{(1)}}$.
\end{proof}

\begin{notation}\label{no:ordinary}
Suppose that $\Pi$ is ordinary. Let $\mu$ and $\nu$ be the two tuples of characters for $\pi_n$ and $\pi_{n+1}$ in Definition \ref{de:ordinary}, respectively. For every $x\in O_F\cap F^\times$, put
\[
\omega^x(\pi)\coloneqq\(\prod_{m=1}^n\prod_{i=1}^m|x|_F^{i-1}\mu_i(x)\)\(\prod_{m=1}^n\prod_{i=1}^m|x|_F^{i-1}\nu_i(x)\)\in \dL^{\circ\times},
\]
and similarly for $\omega^x(\pi^\vee)$ (see Remark \ref{re:ordinary}). When $\Pi$ is semi-stably ordinary, we simply put $\omega(\pi)\coloneqq\omega^\varpi(\pi)$ and $\omega(\pi^\vee)\coloneqq\omega^\varpi(\pi^\vee)$ for some hence every uniformizer $\varpi$ of $F$, and put $\omega(\Pi)\coloneqq\omega(\pi)\omega(\pi^\vee)$.
\end{notation}

\begin{notation}\label{no:ordinary1}
We introduce two more notations.
\begin{enumerate}
  \item Put
    \[
    \xi\coloneqq
    \left(
      \begin{array}{cccc}
         &  & 1 & 1 \\
         & \iddots &  & \vdots \\
        1 &  &  & 1 \\
        0 & \cdots & 0 & 1 \\
      \end{array}
    \right)\in\GL_{n+1}(\dZ)
    \]
    as the element introduced on the top of \cite{Jan11}*{Page~23}.

  \item For $x\in O_F\cap F^\times$, put $[x]\coloneqq x[x]_n\in\GL_n(F)$, regarded as an element of $H(F)$ (in particular, as an element of $G(F)$, $[x]=(x[x]_n,[x]_{n+1})$).
\end{enumerate}
\end{notation}

\begin{lem}\label{le:ordinary3}
For every positive integer $\fc$, put $K_{\fc,H}\coloneqq((1_n,\xi)\cdot[\varpi^\fc])I^{(1)}((1_n,\xi)\cdot[\varpi^\fc])^{-1}\cap H(F)$. Then
\begin{enumerate}
  \item $\det(K_{\fc,H})=1+\fp^\fc$;

  \item $K_{\fc+1,H}$ is contained in $K_{\fc,H}$ of index $q^{\frac{n(n+1)(2n+1)}{6}}$.
\end{enumerate}
\end{lem}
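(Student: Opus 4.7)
The plan is to first obtain an explicit description of $K_{\fc,H}$ as a subgroup of $H(F)\simeq\GL_n(F)$. Unwinding the definition, an element $(h,\diag(h,1))\in H(F)$ lies in $K_{\fc,H}$ if and only if
\[
\([\varpi^\fc]_n^{-1}h[\varpi^\fc]_n,\;[\varpi^\fc]_{n+1}^{-1}\xi^{-1}\diag(h,1)\xi[\varpi^\fc]_{n+1}\)\in I_n^{(1)}\times I_{n+1}^{(1)}.
\]
A direct matrix computation will show that $\xi^{-1}\diag(h,1)\xi$ has top-left $n\times n$ block equal to $w_n h w_n$ (with $w_n$ the antidiagonal of size $n$), last row $(0,\ldots,0,1)$, and last column whose first $n$ entries are of the form $\sum_{k=1}^n h_{\ell,k}-1$ (for $\ell\in\{1,\ldots,n\}$). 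Translating the two Iwahori conditions through conjugation by $[\varpi^\fc]_N$ into entry-wise valuation inequalities, I expect to identify $K_{\fc,H}$ with the set of $h\in\GL_n(F)$ satisfying (i) $h_{ii}\in O_F^\times$; (ii) $h_{ij}\in\fp^{\fc|i-j|}$ for $i\neq j$; and (iii) $\sum_{j=1}^n h_{ij}\equiv 1\pmod{\fp^{\fc i}}$ for $1\leq i\leq n$. Condition (iii) is precisely the new constraint produced by the last column of $\xi^{-1}\diag(h,1)\xi$.

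For part (1), by (ii) each off-diagonal entry $h_{ij}$ lies in $\fp^\fc$, so (iii) forces $h_{ii}\equiv 1\pmod{\fp^\fc}$. In the Leibniz expansion of $\det h$, every non-identity permutation $\sigma$ satisfies $\sum_{i=1}^n|i-\sigma(i)|\geq 2$, so by (ii) its contribution lies in $\fp^{2\fc}$. Hence $\det h\equiv\prod_i h_{ii}\equiv 1\pmod{\fp^\fc}$, giving $\det(K_{\fc,H})\subseteq 1+\fp^\fc$. Conversely, any $u\in 1+\fp^\fc$ is realized by $h=\diag(u,1,\ldots,1)$, which trivially satisfies (i)--(iii).

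For part (2), the description above exhibits $K_{\fc,H}$ as a fibration: the off-diagonal entries $(h_{ij})_{i\neq j}$ range freely in $\prod_{i\neq j}\fp^{\fc|i-j|}$, and each diagonal $h_{ii}$ then ranges over a coset of $\fp^{\fc i}$ (automatically landing in $O_F^\times$). Since $|\det h|_F=1$ on $K_{\fc,H}$, multiplicative Haar on $\GL_n(F)$ agrees, up to a universal constant, with additive Haar on $M_n(F)$ there; and Fubini gives
\[
\vol(K_{\fc,H})\propto\prod_{i\neq j}q^{-\fc|i-j|}\cdot\prod_{i=1}^n q^{-\fc i}=q^{-\fc\cdot n(n+1)(2n+1)/6},
\]
using $\sum_{i\neq j}|i-j|=n(n-1)(n+1)/3$ and $\sum_{i=1}^n i=n(n+1)/2$. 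The index is the ratio of these volumes for $\fc$ and $\fc+1$, namely $q^{n(n+1)(2n+1)/6}$. The main technical hurdle is the entry-by-entry bookkeeping in the first paragraph; once the explicit description of $K_{\fc,H}$ is secured, (1) and (2) reduce to elementary determinant and measure arguments.
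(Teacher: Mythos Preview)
Your proof is correct and follows essentially the same approach as the paper: both obtain the explicit entry-wise description of $K_{\fc,H}$ (your conditions (i)--(iii), with (i) redundant given (ii) and (iii)) via the indicated matrix computation, and then read off (1) and (2). The only cosmetic difference is that for (2) the paper counts the index directly as $q^{\sum_{i=1}^{n-1} i(n-i)}\cdot q^{\sum_{i=1}^{n-1} i(n-i)}\cdot q^{\sum_{i=1}^n i}$ rather than phrasing it via Haar volumes.
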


\begin{proof}
It is clear that $K_{\fc,H}$ does not depend on the choice of $\varpi$. Let $h=(h_{ij})_{1\leq i,j\leq n}\in H(F)$ be an element written in the matrix form. Then a straightforward computation shows that the element $((1_n,\xi)\cdot[\varpi^\fc])^{-1}h((1_n,\xi)\cdot[\varpi^\fc])$ belongs to $I^{(1)}$ if and only if the following are satisfied:
\begin{itemize}
  \item $h_{ij}\in\fp^{|i-j|\fc}$ for $i\neq j$;

  \item $h_{i1}+\cdots+h_{in}-1\in\fp^{i\fc}$ for $1\leq i\leq n$.
\end{itemize}
Then (1) follows directly. For (2), the inclusion is clear; while for the index, we have
\[
[K_{\fc,H}:K_{\fc+1,H}]=q^{\sum_{i=1}^{n-1} i(n-i)}\cdot q^{\sum_{i=1}^{n-1} i(n-i)}\cdot q^{\sum_{i=1}^n i}
=q^{\frac{n(n+1)(2n+1)}{6}}.
\]
The lemma is proved.
\end{proof}

\begin{proposition}\label{pr:ordinary}
Suppose that $\Pi$ is semi-stably ordinary. There exists a unique element $\gamma(\Pi)\in\dL^\times$ depending only on $\Pi$ such that for every pair of ordinary vectors $\varphi\in\pi,\varphi^\vee\in\pi^\vee$, every positive integer $\fc$, and every finite character $\chi\colon E^{\times-}\to\dL_\chi^\times$ of conductor $\fp^\fc$,\footnote{We adopt the convention that when $q=2$, there is no character of conductor $\fp$. In particular, $\chi$ must be ramified here.}
\begin{align*}
&\frac{\gamma(\Pi)}{\langle\varphi^\vee,\varphi\rangle_\pi}\cdot
\alpha^\chi\(\pi((1_n,\xi)\cdot[\varpi^\fc])\varphi,\pi^\vee((1_n,\xi)\cdot[\varpi^\fc])\varphi^\vee\) \\
&=\(q^{-\frac{n(n+1)(2n+1)}{6}}\)^\fc\frac{\Delta_{n+1}}{L(1,\Pi_n,\As^{(-1)^n})L(1,\Pi_n+1,\As^{(-1)^{n+1}})},
\end{align*}
where we adopt the Haar measure on $H(F)$ that gives a hyperspecial maximal subgroup volume $1$.
\end{proposition}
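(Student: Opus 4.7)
The plan is to reduce to a local Birch-type computation in the Whittaker model, following \cite{Jan11} (which generalizes the $\GL_1\times\GL_2$ prototype \cite{KMS00}) in our $\GL_n\times\GL_{n+1}$ setting. Several reductions are immediate. By Lemma \ref{le:matrix0} it suffices to verify the identity over $\dC$ after any embedding $\iota\colon\dL_\chi\to\dC$. Lemmas \ref{le:ordinary} and \ref{le:ordinary2} make the ordinary line one-dimensional and ensure $\langle\varphi^\vee,\varphi\rangle_\pi\neq 0$, so both sides scale the same way under rescaling of $\varphi$ or $\varphi^\vee$ and the ratio on the LHS depends only on $\pi$, $\fc$, and $\chi$. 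Combined with Lemma \ref{le:matrix1}, which forces $\alpha^\chi$ to be unique up to scalar, the existence and uniqueness of a single $\gamma(\Pi)\in\dL^\times$ independent of these choices is automatic once the identity is established for one specific test pair and one $\chi$.

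First, I would realize $\pi$ and $\pi^\vee$ in their Whittaker models and apply Lemma \ref{le:matrix3} to $\alpha^\chi\bigl(\pi((1_n,\xi)\cdot[\varpi^\fc])\varphi,\pi^\vee((1_n,\xi)\cdot[\varpi^\fc])\varphi^\vee\bigr)$. This expresses the LHS (up to a rational constant from Haar measure normalizations) as the product of two Rankin--Selberg-type zeta integrals
\[
Z_\fc^\chi(W)\coloneqq\int_{(U\cap H)(F)\backslash H(F)}W\bigl(h\cdot(1_n,\xi)\cdot[\varpi^\fc]\bigr)\chi(\det h)\,\rd h
\]
(and the analogous $Z_\fc^{\chi^{-1}}(W^\vee)$), where $W$ is the Whittaker function of an ordinary vector.

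Second, I would execute the Birch computation of $Z_\fc^\chi(W)$. Semi-stable ordinarity places $W$ inside the $I^{(1)}$-fixed subspace by Lemma \ref{le:ordinary1}. The element $\xi$ is the Januszewski intertwiner that makes the Jacquet--Shalika integral decompose as a sum over Iwahori cosets in $(U\cap H)(F)\backslash H(F)$; right-translation by $[\varpi^\fc]$ shifts this support so that the translated vector becomes invariant under $K_{\fc,H}$ (Lemma \ref{le:ordinary3}). Because $\det K_{\fc,H}=1+\fp^\fc$, the character $\chi$ of conductor $\fp^\fc$ is trivial on $K_{\fc,H}$; the $\chi$-twisted integration then selects only the identity Iwahori coset, with all other contributions killed by elementary character-sum orthogonality. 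The surviving term has value $\vol(K_{\fc,H})\cdot W(1_H)$ multiplied by the $\tV^{\varpi^\fc}$-eigenvalue from Lemma \ref{le:ordinary}, and iterating Lemma \ref{le:ordinary3}(2) under the normalized Haar measure gives $\vol(K_{\fc,H})=q^{-\fc\cdot n(n+1)(2n+1)/6}$, producing the announced $q$-power.

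Third, I would fix $\gamma(\Pi)$ by comparison with the unramified formula. For semi-stably ordinary $\Pi$ and ramified $\chi$, each Satake parameter of $\Pi_n$ twisted by $\chi$ is ramified, so every Euler factor in $L(\tfrac{1}{2},(\Pi_n\otimes\wt\chi)\times\Pi_{n+1})$ is trivial and the Birch output reduces to the bare $q$-power times a ratio of Whittaker pairings. Matching with Proposition \ref{pr:matrix} — which normalizes the unramified zeta integrals so that their product produces exactly $\Delta_{n+1}/\bigl(L(1,\Pi_n,\As^{(-1)^n})L(1,\Pi_{n+1},\As^{(-1)^{n+1}})\bigr)$ — pins down the constant $\gamma(\Pi)$, and this constant depends only on $\Pi$ since the ordinary line is one-dimensional and the Whittaker realization is canonical up to an overall scalar absorbed into $\gamma(\Pi)$. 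The main obstacle is the explicit Iwahori decomposition of $Z_\fc^\chi(W)$: one must verify that right-translation by $(1_n,\xi)\cdot[\varpi^\fc]$ matches the support of the ordinary stabilization exactly, that the volume factor comes out to Lemma \ref{le:ordinary3}(2) with no stray powers of $q$, and that the character-sum cancellation of the non-identity Iwahori cosets is complete under the conductor hypothesis on $\chi$.
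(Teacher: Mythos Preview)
Your overall architecture --- pass to Whittaker models and factor $\alpha^\chi$ via Lemma \ref{le:matrix3} into a product of two $(U\cap H)\backslash H$-integrals --- matches the paper. The gap is in your ``Birch computation'' of each factor $Z_\fc^\chi(W)$.

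You claim that the $\chi$-twist ``selects only the identity Iwahori coset'' so that $Z_\fc^\chi(W)$ reduces to $\vol(K_{\fc,H})\cdot W(1_H)$ times an eigenvalue, and that $\vol(K_{\fc,H})=q^{-\fc\, n(n+1)(2n+1)/6}$ then gives the announced power. This is not what happens, and the bookkeeping already shows it cannot: there are \emph{two} factors $Z_\fc^\chi(W)$ and $Z_\fc^{\chi^{-1}}(W^\vee)$, so your mechanism would produce the \emph{square} of that $q$-power. The actual local Birch lemma (this is exactly \cite{Jan11}*{Corollary~2.8}, building on \cite{KMS00}) shows that each single integral evaluates to a constant times
\[
\bigl(q^{-\frac{n(n+1)(n+2)}{6}}\bigr)^{\fc}\cdot G_{\psi_F}(\chi)^{\frac{n(n+1)}{2}}\cdot W(1),
\]
i.e.\ a genuine Gauss sum appears, not a bare volume; the $\tV^{\varpi^\fc}$-eigenvalue does not enter the final expression. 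The exponent $n(n+1)(2n+1)/6$ then arises only after multiplying the two factors and using $G_{\psi_F}(\chi)\,G_{\psi_F^{-1}}(\chi^{-1})=q^{\fc}$, which contributes $q^{\fc\, n(n+1)/2}$ against $q^{-\fc\, n(n+1)(n+2)/3}$. Your ``character-sum orthogonality kills the non-identity cosets'' picture misses this: the support of the translated Whittaker function on $(U\cap H)\backslash H$ is not a single $K_{\fc,H}$-coset, and the sum over the relevant cells is a Gauss sum, not a Kronecker delta.

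Your third step is also more elaborate than needed. One does not match with Proposition \ref{pr:matrix} (which concerns \emph{unramified} $\chi$); once the Jan11 formula is in hand, the paper simply sets $\gamma(\Pi)$ equal to the resulting ratio, checks it lies in $\dL^\times$ by evaluating at a ramified $\chi$ with values in $\iota\dL^\times$, and invokes \cite{KMS00}*{Proposition~4.12} for $W(1)\cdot W^\vee(1)\neq 0$.
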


\begin{proof}
Choose an embedding $\iota\colon\dL\to\dC$ and an additive character $\psi_F\colon F\to\dC^\times$ of conductor $O_F$. Let $\psi\colon U(F)\to\dC^\times$ be the character sending $((u_{ij})_{1\leq i,j\leq n},(v_{ij})_{1\leq i,j\leq n+1})\in U_n(F)\times U_{n+1}(F)=U(F)$ to $\psi_F(-u_{12}-\cdots-u_{n-1n}+v_{12}+\cdots+v_{nn+1})$, which is a generic character trivial on $(U\cap H)(F)$. Choose $G(F)$-equivariant isomorphisms $i_1\colon\pi\otimes_{\dL,\iota}\dC\xrightarrow\sim\cW(\iota\pi)_\psi$ and $i_2\in\pi^\vee\otimes_{\dL,\iota}\dC\xrightarrow\sim\cW(\iota\pi^\vee)_{\psi^{-1}}$ such that $\iota\langle\varphi^\vee,\varphi\rangle_\pi=\vartheta(i_2\varphi^\vee,i_1\varphi)$ holds for all $\varphi\in\pi$ and $\varphi^\vee\in\pi^\vee$. By Lemma \ref{le:matrix3}, there exists a constant $c\in\dQ^\times$ depending only on certain rational Haar measures, such that for every finite character $\chi\colon E^{-\times}\to\dC^\times$,
\[
\alpha^\chi(\varphi,\varphi^\vee)=c\(\int_{(U\cap H)(F)\backslash H(F)}(i_1\varphi)(h)\cdot\chi(\det h)\rd h\)
\(\int_{(U\cap H)(F)\backslash H(F)}(i_2\varphi^\vee)(h)\cdot\chi(\det h)^{-1}\rd h\)
\]
for every $\varphi\in\pi$ and $\varphi^\vee\in\pi^\vee$. Apply this formula to $\pi((1_n,\xi)\cdot[\varpi^\fc])\varphi$ and $\pi^\vee((1_n,\xi)\cdot[\varpi^\fc])\varphi^\vee$ with $\varphi$ and $\varphi^\vee$ ordinary vectors when $\chi$ has conductor $\fp^\fc$. By \cite{Jan11}*{Corollary~2.8}, we obtain
\begin{align}\label{eq:ordinary}
&\alpha^\chi\(\pi((1_n,\xi)\cdot[\varpi^\fc])\varphi,\pi^\vee((1_n,\xi)\cdot[\varpi^\fc])\varphi^\vee\) \notag \\
&=c\prod_{i=1}^n(1-q^{-i})^{-2}\cdot\(q^{-\frac{n(n+1)(n+2)}{3}}\)^\fc
\cdot G_{\psi_F}(\chi)^{\frac{n(n+1)}{2}}G_{\psi_F^{-1}}(\chi^{-1})^{\frac{n(n+1)}{2}}\cdot (i_1\varphi)(1)\cdot (i_2\varphi^\vee)(1),
\end{align}
where $G_{\psi_F^{\pm1}}(\chi^{\pm1})$ denotes the Gauss sums (the one in \cite{Jan15}*{\S2}) of $\chi^{\pm 1}$ with respect to $\psi_F^{\pm 1}$. As it is well-known that $G_{\psi_F}(\chi)G_{\psi_F^{-1}}(\chi^{-1})=q^\fc$, we have
\[
\eqref{eq:ordinary}=c\prod_{i=1}^n(1-q^{-i})^{-2}\cdot\(q^{-\frac{n(n+1)(2n+1)}{6}}\)^\fc
\cdot (i_1\varphi)(1)\cdot (i_2\varphi^\vee)(1).
\]
By \cite{KMS00}*{Proposition~4.12}, we have $(i_1\varphi)(1)\cdot (i_2\varphi^\vee)(1)\neq 0$. Put
\[
c'\coloneqq\frac{\prod_{i=1}^n(1-q^{-i})^2\cdot\langle\varphi^\vee,\varphi\rangle_\pi}{c\cdot (i_1\varphi)(1)\cdot (i_2\varphi^\vee)(1)},
\]
which belongs to $\dC^\times$ by Lemma \ref{le:ordinary2} and depends only on $\Pi$ and certain Haar measures. Then we have
\begin{align*}
\frac{c'}{\langle\varphi^\vee,\varphi\rangle_\pi}\cdot
\alpha^\chi\(\pi((1_n,\xi)\cdot[\varpi^\fc])\varphi,\pi^\vee((1_n,\xi)\cdot[\varpi^\fc])\varphi^\vee\)
=\(q^{-\frac{n(n+1)(2n+1)}{6}}\)^\fc
\end{align*}
for every $\fc\geq 1$ and every finite character $\chi\colon E^{-\times}\to\dC^\times$ of conductor $\fp^\fc$. Now it is clear that $c'$ depends only on $\Pi$. Finally, we may choose a ramified character $\chi$ that takes values in $\iota\dL^\times$, which implies that $c'\in\iota\dL^\times$. The proposition follows by taking
\[
\gamma(\Pi)\coloneqq(\iota^{-1}c')\frac{\Delta_{n+1}}{L(1,\Pi_n,\As^{(-1)^n})L(1,\Pi_{n+1},\As^{(-1)^{n+1}})}\in\dL^\times.
\]
\end{proof}

\section{Coherent anticyclotomic $p$-adic $L$-function}
\label{ss:coherent}

In this section, we take $\tP$ to be a subset of
\begin{align}\label{eq:ordinaryp}
\tP(\Pi)\coloneqq\left\{\left.v\in\tV_F^{(p)}\cap\tV_F^\spl\right|\text{$\Pi_v$ is semi-stably ordinary}\right\}.
\end{align}

We study the case where $\Pi=\Pi_n\boxtimes\Pi_{n+1}$ is coherent and aim to construct a $p$-adic $L$-function $\sL^0_\cE(\Pi)$ for every $\tP$-extension $\cE/E$ (Definition \ref{de:extension}). In the coherent case, there exists a totally positive definite hermitian space $V_n$ over $E$ of rank $n$, unique up to isomorphism, such that $V_{n,v}$ is the prescribed hermitian space from Section \ref{ss:ggp} for every $v\in\tV_F^\fin$. Put $V_{n+1}\coloneqq V_n\oplus E\cdot\te$. Put
\[
G\coloneqq\rU(V_n)\times\rU(V_{n+1}),\quad
\pi\coloneqq\otimes_{v\in\tV_F^\fin}\pi_v
\]
which is an irreducible admissible representation of $G(\dA_F^\infty)$ with coefficients in $\dL$.

Put
\[
\dV_\Pi\coloneqq\Hom_{G(\dA_F^\infty)}\(\pi,\cS\(G(F)\backslash G(\dA_F^\infty),\dL\)\),\quad
\dV_{\Pi^\vee}\coloneqq\Hom_{G(\dA_F^\infty)}\(\pi^\vee,\cS\(G(F)\backslash G(\dA_F^\infty),\dL\)\).
\]
Define $\sL^0_\cE(\Pi)=0$ when $\dV_\Pi=0$.

Now we assume $\dV_\Pi\neq 0$ hence $\dV_{\Pi^\vee}\neq 0$. Then by Arthur's multiplicity formula \cites{Mok15,KMSW}, we have $\dim_\dL\dV_\Pi=\dim_\dL\dV_{\Pi^\vee}=1$. Denote by $\cV,\cV^\vee\subseteq\cS\(G(F)\backslash G(\dA_F^\infty),\dL\)$ the unique irreducible $\dL[G(\dA_F^\infty)]$-submodules that are isomorphic to $\pi$ and $\pi^\vee$, respectively. Using the Petersson inner product with respect to the Tamagawa measure of $G(\dA_F)$, we have a canonical isomorphism $\pi\otimes_\dL\pi^\vee\simeq\cV\otimes_\dL\cV^\vee$.

Denote by $H\subseteq G$ the graph of the natural embedding $\rU(V_n)\hookrightarrow\rU(V_{n+1})$, and fix a decomposition $\rd h=\rd h_\infty\cdot\rd h_\tP\cdot\rd h^{\infty,\tP}$ of the Tamagawa measure on $H(\dA_F)$ such that the volume of $H(F_\infty)$ under $\rd h_\infty$ is $1$ and the volume of every hyperspecial maximal subgroup of $H(F_\tP)$ under $\rd h_\tP$ is $1$ (so that the measure $\rd h^{\infty,\tP}$ is rational).

For every $v\in\tP$, fix a standard isomorphism $G_v\simeq\GL_{n,F_v}\times\GL_{n+1,F_v}$ (Definition \ref{de:standard}). For every element $\varphi=\otimes_{v\in\tV_F^\fin}\varphi_v\in\cV$ with $\varphi_v\in\pi_v$ that is a (nonzero) ordinary vector for $v\in\tP$, we define a measure $\sP_\varphi\in\dL[[\Gamma_\tP]]^\circ$ as follows: For every subset $\Omega$ of $\Gamma_\tP$, we denote by $H_\Omega$ its inverse image under the determinant map $H(F)\backslash H(\dA_F^\infty)\to\Gamma_\tP$. For a $\fc$-cell $\Omega$ of $\Gamma_\tP$ for some tuple $\fc=(\fc_v)_{v\in\tP}$ of positive integers, we put
\[
\sP_\varphi(\Omega)\coloneqq\prod_{v\in\tP}\(\frac{q_v^{\frac{n(n+1)(2n+1)}{6}}}{\omega(\pi_v)}\)^{\fc_v}
\cdot\int_{H_\Omega}\varphi\(h^\infty\cdot(1_n,\xi)\cdot[\varpi^\fc]\)\rd h^\infty,
\]
in which $(1_n,\xi)\in\GL_n(F_\tP)\times\GL_{n+1}(F_\tP)=G(F_\tP)$ and $[\varpi^\fc]\in\GL_n(F_\tP)=H(F_\tP)$. It is clear that the above integral is a finite sum of elements in $\dL$ and is independent of the choice of $\varpi$ by Lemma \ref{le:ordinary1}.

\begin{proposition}
The assignment $\Omega\mapsto\sP_\varphi(\Omega)$ defines a measure $\sP_\varphi$ on $\Gamma_\tP$ valued in $\dL$; in other words, $\sP_\varphi$ is an element of $\dL[[\Gamma_\tP]]^\circ$.
\end{proposition}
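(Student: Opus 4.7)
The plan is to verify that (i) the distribution $\sP_\varphi$ is additive under refinement of $\fc$-cells (so it extends to a finitely additive function on all open compact subsets of $\Gamma_\tP$) and (ii) its values lie uniformly in a fixed $\dL^\circ$-lattice. Both reduce to a local analysis at each $v \in \tP$ powered by the ordinary eigenvector property of $\varphi_v$.

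First, since $\varphi_v$ is fixed by $I^{(1)}_v$ for every $v \in \tP$ by Lemma \ref{le:ordinary1}, the integrand $h \mapsto \varphi(h \cdot (1_n,\xi)[\varpi^\fc])$ on $H(F)\backslash H(\dA_F^\infty)$ is right-invariant under $K_{\fc,H} \coloneqq \prod_{v \in \tP} K_{\fc_v, H_v}$, so the defining integral is a finite sum of $\varphi$-values weighted by $\mathrm{vol}(K_{\fc,H})$. By Lemma \ref{le:ordinary3}(2), this volume equals $C_0 \cdot \prod_{v \in \tP} q_v^{-\fc_v \cdot n(n+1)(2n+1)/6}$ for a fixed $C_0 \in \dQ_{>0}$, so the normalization $\prod_v(q_v^{n(n+1)(2n+1)/6}/\omega(\pi_v))^{\fc_v}$ cancels the $q_v$-powers and leaves
\[
\sP_\varphi(\Omega) \;=\; C_0 \cdot \prod_{v \in \tP} \omega(\pi_v)^{-\fc_v} \cdot \sum_{h K_{\fc,H} \subseteq H_\Omega} \varphi\bigl(h\cdot(1_n,\xi)[\varpi^\fc]\bigr).
\]
Because $\omega(\pi_v) \in \dL^{\circ\times}$ by Notation \ref{no:ordinary}, and because $G$ is definite (so $G(F)\backslash G(\dA_F^\infty)$ is compact and $\varphi$ can be rescaled to $\dL^\circ$-values), the collection $\{\sP_\varphi(\Omega)\}_\Omega$ lies in a single $\dL^\circ$-lattice independent of $\fc$, giving (ii).

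For (i), by induction on a single coordinate it suffices to check that refining a $\fc$-cell $\Omega$ into $q_{v_0}$ subcells $\Omega_j$ of type $\fc+\te_{v_0}$ (for $|\fc|$ large enough that $\fU_\fc \hookrightarrow \Gamma_\tP$) preserves the total measure. Using the simplified formula above, this reduces to the pointwise local identity at $v_0$
\[
\sum_{u \in K_{\fc, H_{v_0}}/K_{\fc+\te_{v_0}, H_{v_0}}} \varphi\bigl(h_0 \cdot u \cdot g_\fc \cdot [\varpi_{v_0}]\bigr) \;=\; \omega(\pi_{v_0}) \cdot \varphi(h_0 \cdot g_\fc),
\]
where $g_\fc \coloneqq (1_n,\xi)[\varpi^\fc]$ and $h_0$ is any base point. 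Conjugating by $g_\fc$ sends $K_{\fc, H_{v_0}}$ into $I^{(1)}_{v_0}$ (by the very definition of $K_{\fc,H_{v_0}}$ in Lemma \ref{le:ordinary3}), so the left side evaluates $\sum_w \pi_{v_0}(w \cdot [\varpi_{v_0}])\varphi_{v_0}$ at the base point. The coset count $q_{v_0}^{n(n+1)(2n+1)/6}$ from Lemma \ref{le:ordinary3}(2) matches the combined index $|U_n(O_{F_{v_0}})/(U_n(O_{F_{v_0}})\cap[\varpi_{v_0}]_nU_n(O_{F_{v_0}})[\varpi_{v_0}]_n^{-1})|\cdot|U_{n+1}(\ldots)/(\ldots)|$. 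Choosing representatives in $U_n(O_{F_{v_0}})\times U_{n+1}(O_{F_{v_0}})$ and using the $U(O_{F_{v_0}})$-invariance of the ordinary vector $\varphi_{v_0}$, the sum becomes $(\tV_n^{\varpi_{v_0}}\otimes\tV_{n+1}^{\varpi_{v_0}})\varphi_{v_0}$, further multiplied by the central-character factor $\mu^\natural_n(\varpi_{v_0}\cdot 1_n)$ accounting for the scalar $x$ in the formula $[x]=x[x]_n$ of Notation \ref{no:ordinary1}(2). The eigenvalue formula in Lemma \ref{le:ordinary}, combined with this scalar shift, gives exactly $\omega(\pi_{v_0})\varphi_{v_0}$.

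The most delicate step is the coset identification: one must match $g_\fc^{-1} K_{\fc, H_{v_0}} g_\fc / g_\fc^{-1} K_{\fc+\te_{v_0}, H_{v_0}} g_\fc$ inside $I^{(1)}_{v_0}$ with the unipotent cosets defining $\tV^{\varpi_{v_0}}_n \otimes \tV^{\varpi_{v_0}}_{n+1}$, up to the $U(O_{F_{v_0}})$-invariance of $\varphi_{v_0}$. The index equality from Lemma \ref{le:ordinary3}(2) is a strong sanity check, but the explicit representative matching requires the description of elements of $K_{\fc,H_{v_0}}$ from the proof of that lemma. Once this matching is in hand, the additivity identity follows directly from Lemma \ref{le:ordinary}, completing the proof that $\sP_\varphi$ is a measure in $\dL[[\Gamma_\tP]]^\circ$.
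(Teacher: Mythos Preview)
Your proposal is correct and follows essentially the same approach as the paper. The boundedness argument is identical (Lemma~\ref{le:ordinary3}(2) for the volume decay, $\omega(\pi_v)\in\dL^{\circ\times}$, and boundedness of $\varphi$ on the compact quotient); for additivity the paper simply cites \cite{Jan11}*{Theorem~4.4}, whereas you unwind that argument directly---the coset matching you flag as ``delicate'' is exactly the content of \cite{Jan15}*{Lemma~6.5} (used later in the paper in the proof of Proposition~\ref{pr:cycle}), and your identification of the resulting Hecke operator with the central-character twist of $\tV_n^{\varpi}\otimes\tV_{n+1}^{\varpi}$ giving eigenvalue $\omega(\pi_{v_0})$ is correct.
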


\begin{proof}
The additivity of $\sP_\varphi$ follows from the same argument in the proof of \cite{Jan11}*{Theorem~4.4} (with a translation by $[\varpi^\fc]$). For the boundedness, Lemma \ref{le:ordinary3}(2) implies that there exists a positive integer $C$ such that for every tuple $\fc=(\fc_v)_{v\in\tP}$ of positive integers and every $\fc$-cell $\Omega$, the volume of $H_\Omega$ is bounded by
\[
C\cdot\prod_{v\in\tP}\(q_v^{-\frac{n(n+1)(2n+1)}{6}}\)^{\fc_v}.
\]
Thus, the boundedness of $\sP_\varphi$ follows from the fact that $\omega(\pi_v)\in\dL^{\circ\times}$ for $v\in\tP$ and that $\varphi$ is bounded.
\end{proof}

\begin{theorem}\label{th:function}
There exists a unique measure $\sL^0_{\cE_\tP}(\Pi)\in\dL[[\Gamma_\tP]]^\circ$ such that for every finite character $\chi\colon\Gamma_\tP\to\dL_\chi^\times$ of conductor $\prod\fp_v^{\fc_v}$ for a tuple $\fc=(\fc_v)_{v\in\tP}$ of positive integers indexed by $\tP$ and every embedding $\iota\colon\dL_\chi\to\dC$, we have
\[
\iota\sL^0_{\cE_\tP}(\Pi)(\chi)=\prod_{v\in\tP}\(\frac{q_v^{\frac{n(n+1)(2n+1)}{6}}}{\iota\omega(\Pi_v)}\)^{\fc_v}\cdot
\frac{\Delta_{n+1}\cdot L(\tfrac{1}{2},\iota(\Pi_n\otimes\wt\chi)\times\iota\Pi_{n+1})}
{2^{d(\Pi_n)+d(\Pi_{n+1})}\cdot L(1,\iota\Pi_n,\As^{(-1)^n})L(1,\iota\Pi_{n+1},\As^{(-1)^{n+1}})}.
\]
Here, $d(\Pi_N)$ for $N=n,n+1$ is introduced in Remark \ref{re:relevant}(4).\footnote{The evaluation of $\sL^0_{\cE_\tP}(\Pi)$ at general finite order characters will be carried out in \cite{LS}.}
\end{theorem}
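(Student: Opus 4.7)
Uniqueness is immediate: as noted earlier in the paper, any element of $\dL[[\Gamma_\tP]]^\circ$ is determined by its values on finite characters of $\Gamma_\tP$ ramified at every place above $\tP$, which are exactly the characters appearing in the interpolation formula. For existence, I construct $\sL^0_{\cE_\tP}(\Pi)$ as $C \cdot \sP_\varphi \cdot \sP_{\varphi^\vee}^\dag$ for a fixed decomposable pair $\varphi = \otimes_v \varphi_v \in \cV$ and $\varphi^\vee = \otimes_v \varphi^\vee_v \in \cV^\vee$, and a constant $C \in \dL$ to be specified. I would take $\varphi_v, \varphi^\vee_v$ to be ordinary (Definition \ref{de:ordinary1}) at each $v \in \tP$; spherical at all $v \notin \tP$ where $\pi_v$ is unramified; the Whittaker-type test vectors of Proposition \ref{pr:matrix} at the remaining split non-$\tP$ finite places; and arbitrary nonzero vectors elsewhere, arranged so that $\langle \varphi^\vee, \varphi \rangle_\pi \neq 0$.

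\textbf{Bessel factorization via the refined GGP.} For a finite character $\chi$ of conductor $\prod_{v \in \tP} \fp_v^{\fc_v}$, unwinding the definitions of $\sP_\varphi, \sP_{\varphi^\vee}$ and using $\sP_{\varphi^\vee}^\dag(\chi) = \sP_{\varphi^\vee}(\chi^{-1})$ together with $\omega(\Pi_v) = \omega(\pi_v)\omega(\pi^\vee_v)$ gives
\[
\sP_\varphi(\chi) \cdot \sP_{\varphi^\vee}^\dag(\chi) = \prod_{v \in \tP} \(\frac{q_v^{n(n+1)(2n+1)/3}}{\omega(\Pi_v)}\)^{\fc_v} \cdot \cP_\chi(\varphi) \cdot \cP_{\chi^{-1}}(\varphi^\vee),
\]
where $\cP_\chi(\varphi) \coloneqq \int_{H(F) \backslash H(\dA_F^\infty)} \varphi(h \cdot g_\fc) \chi(\det h) \rd h^\infty$ and $g_\fc \in G(\dA_F^\infty)$ has $v$-component $(1_n,\xi) \cdot [\varpi_v^{\fc_v}]$ for $v \in \tP$ and trivial elsewhere. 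The refined Gan--Gross--Prasad formula for definite unitary groups, now a theorem by \cite{BPLZZ} and \cite{BPCZ}, factorizes this product of periods (after any $\iota\colon \dL_\chi \to \dC$) as
\[
\iota\langle \varphi^\vee, \varphi \rangle_\pi \cdot \frac{\Delta_{n+1} \cdot L(\tfrac{1}{2}, \iota(\Pi_n \otimes \wt\chi) \times \iota\Pi_{n+1})}{2^{d(\Pi_n) + d(\Pi_{n+1})} \cdot L(1, \iota\Pi_n, \As^{(-1)^n}) L(1, \iota\Pi_{n+1}, \As^{(-1)^{n+1}})} \cdot \prod_v \alpha^{\natural,\chi_v}_\iota\(\pi_v(g_{\fc,v})\varphi_v, \pi^\vee_v(g_{\fc,v})\varphi^\vee_v\),
\]
where $\alpha^{\natural,\chi_v}_\iota$ denotes the local matrix coefficient integral normalized to equal $1$ on spherical unramified data (via Lemma \ref{le:matrix2}), and the global constant $2^{-(d(\Pi_n) + d(\Pi_{n+1}))}$ arises as the inverse order of the relevant Arthur component group under Arthur's multiplicity formula (\cite{Mok15}, \cite{KMSW}).

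\textbf{Local evaluation and assembly.} At $v \in \tP$, Proposition \ref{pr:ordinary} produces $\alpha^{\natural,\chi_v}_\iota\(\pi_v(g_{\fc,v})\varphi_v, \pi^\vee_v(g_{\fc,v})\varphi^\vee_v\) = \gamma(\Pi_v)^{-1} (q_v^{-n(n+1)(2n+1)/6})^{\fc_v}$, which combined with the Bessel-factorization prefactor yields $\prod_{v \in \tP}\gamma(\Pi_v)^{-1}(q_v^{n(n+1)(2n+1)/6}/\iota\omega(\Pi_v))^{\fc_v}$, the right shape for the theorem. At almost all finite $v$ the normalized integral equals $1$ by Lemma \ref{le:matrix2}; at the split non-$\tP$ places where vectors come from Proposition \ref{pr:matrix} it also equals $1$; at the finitely many remaining finite places and at the archimedean places, the normalized integral is a fixed scalar $c_v \in \iota\dL^\times$ independent of $\chi$, by Lemma \ref{le:matrix1} (since $\chi_v$ is either trivial or unramified at non-$\tP$ places and the archimedean data is fixed). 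Setting
\[
C \coloneqq \langle \varphi^\vee, \varphi \rangle_\pi^{-1} \cdot \prod_{v \in \tP}\gamma(\Pi_v) \cdot \prod_v c_v^{-1}
\]
(the last product being finite) then delivers the required interpolation formula.

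\textbf{Main obstacle.} The principal difficulty is pinning down the archimedean contribution and the constant $2^{-(d(\Pi_n) + d(\Pi_{n+1}))}$ in the global GGP formula: verifying the latter requires tracing Arthur's multiplicity formula (\cite{Mok15}, \cite{KMSW}) for the Vogan packet attached to the isobaric decomposition of $\Pi^{(\iota)}$ (with $d(\Pi_N)$ cuspidal summands for $N = n, n+1$) and reconciling the Ichino--Ikeda-type local normalization implicit in Lemma \ref{le:matrix2} with the global constant. The well-definedness of $\sL^0_{\cE_\tP}(\Pi)$ independent of test-vector choices follows formally from Lemma \ref{le:matrix1}: any rescaling of $\varphi_v$ (or $\varphi^\vee_v$) rescales both $\langle \varphi^\vee, \varphi \rangle_\pi$ and $\alpha^{\chi_v}_\iota$ by the same factor, leaving $C \cdot \sP_\varphi \cdot \sP_{\varphi^\vee}^\dag$ unchanged.
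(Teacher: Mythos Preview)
Your approach is essentially the paper's: define $\sL^0_{\cE_\tP}(\Pi)$ as a constant times $\sP_\varphi\cdot(\sP_{\varphi^\vee})^\dag$, then evaluate via the refined Gan--Gross--Prasad formula, using Proposition~\ref{pr:ordinary} at $v\in\tP$ and Proposition~\ref{pr:matrix} / Lemma~\ref{le:matrix2} at the remaining split places.

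Two points of imprecision are worth flagging. First, at nonsplit finite places where $\pi_v$ is ramified you write ``arbitrary nonzero vectors'', but your constant $C$ then involves $c_v^{-1}$; for this to make sense you must choose $(\varphi_v,\varphi_v^\vee)$ with $\alpha(\varphi_v,\varphi_v^\vee)\neq 0$, exactly the paper's condition (T3), which is possible by Lemma~\ref{le:matrix1}. Second, your ``main obstacle'' is overstated: since $G$ is definite, $H(F_\infty)$ is compact with volume $1$ and $\pi_\infty$ is trivial, so the archimedean local integral is immediate and the Ichino--Ikeda formula in the cited references \cite{BPLZZ}, \cite{BPCZ} is already stated with the exact constant $2^{-(d(\Pi_n)+d(\Pi_{n+1}))}$ --- there is nothing further to trace through Arthur's multiplicity formula. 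The paper's version of the refined formula (with unnormalized local integrals $\alpha^{\chi_v}$ and partial $L$-values away from a finite set $\tS\subseteq\tV_F^\fin$) is slightly cleaner than your version with the global pairing $\langle\varphi^\vee,\varphi\rangle_\pi$ pulled out, and avoids the ambiguity in what ``normalized to equal $1$ on spherical data'' means for non-spherical vectors.
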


The theorem holds even when $\dV_\Pi=0$ since in this case $L(\tfrac{1}{2},\iota(\Pi_n\otimes\wt\chi)\times\iota\Pi_{n+1})$ always vanishes; see Remark \ref{re:vanishing1}.

\begin{proof}
The uniqueness is clear. Now we show the existence. Since $\Gamma_\tP$ is torsion free, $\chi_v=1$ for every finite order character $\chi$ as in the theorem and every $v\in\tV_F^\fin\setminus\tV_F^\spl$.

Choose a pair $\varphi=\otimes_{v\in\tV_F^\fin}\varphi_v\in\cV$ and $\varphi^\vee=\otimes_{v\in\tV_F^\fin}\varphi^\vee_v\in\cV^\vee$ with $\varphi_v\in\pi_v$ and $\varphi^\vee_v\in\pi^\vee_v$ satisfying
\begin{itemize}
  \item[(T1)] for $v\in\tP$, both $\varphi_v$ and $\varphi^\vee_v$ are (nonzero) ordinary vectors;

  \item[(T2)] for $v\in\tV_F^\spl\setminus\tP$, the pair $(\varphi_v,\varphi_v^\vee)$ satisfies the conclusion of Proposition \ref{pr:matrix};

  \item[(T3)] for $v\in\tV_F^\fin\setminus\tV_F^\spl$, $\alpha(\varphi_v,\varphi_v^\vee)\neq 0$.
\end{itemize}
By Proposition \ref{pr:matrix} and Lemma \ref{le:matrix2}, such choice is possible. Put
\begin{align}\label{eq:measure}
\sL^0_{\cE_\tP}(\Pi)\coloneqq
\prod_{v\in\tP}\frac{\gamma(\Pi_v)}{\langle\varphi^\vee_v,\varphi_v\rangle_{\pi_v}}
\cdot\prod_{v\in\tV_F^\fin\setminus\tV_F^\spl}\frac{\alpha(\varphi_v,\varphi_v^\vee)^{-1}\cdot\Delta_{n+1,v}\cdot L(\tfrac{1}{2},\Pi_{n,v}\times\Pi_{n+1,v})}
{L(1,\Pi_{n,v},\As^{(-1)^n})L(1,\Pi_{n+1,v},\As^{(-1)^{n+1}})}\cdot\sP_\varphi\cdot(\sP_{\varphi^\vee})^\dag,
\end{align}
where $\gamma(\Pi_v)\in\dL^\times$ is the constant in Proposition \ref{pr:ordinary} and $\dag$ is the involution introduced in Notation \ref{no:dagger}. Note that by Lemma \ref{le:matrix2}, Lemma \ref{le:ordinary2}, and (T3), the above expression makes sense.

To show the interpolation property, we suppress $\iota$ from the notation. In particular, all representations have coefficients in $\dC$. By \eqref{eq:measure}, for $\chi$ as in the theorem, we have
\begin{align*}
\sL^0_{\cE_\tP}(\Pi)(\chi)
&=\prod_{v\in\tP}\frac{\gamma(\Pi_v)}{\langle\varphi^\vee_v,\varphi_v\rangle_{\pi_v}}
\(\frac{q_v^{\frac{n(n+1)(2n+1)}{3}}}{\omega(\Pi_v)}\)^{\fc_v}
\cdot\prod_{v\in\tV_F^\fin\setminus\tV_F^\spl}\frac{\alpha(\varphi_v,\varphi_v^\vee)^{-1}\cdot\Delta_{n+1,v}\cdot L(\tfrac{1}{2},\Pi_{n,v}\times\Pi_{n+1,v})}{L(1,\Pi_{n,v},\As^{(-1)^n})L(1,\Pi_{n+1,v},\As^{(-1)^{n+1}})} \\
&\times\int_{H(F)\backslash H(\dA_F^\infty)}\varphi\(h^\infty\cdot(1_n,\xi)\cdot[\varpi^\fc]\)\chi(\det h^\infty)\rd h^\infty \\
&\times\int_{H(F)\backslash H(\dA_F^\infty)}\varphi^\vee\(h^\infty\cdot(1_n,\xi)\cdot[\varpi^\fc]\)\chi(\det h^\infty)^{-1}\rd h^\infty.
\end{align*}

Now we apply the refined Gan--Gross--Prasad conjecture (that is, the Ichino--Ikeda conjecture), which has been fully proved for $G$ in \cites{BPLZZ,BPCZ}. Let $\tS$ be a subset of $\tV_F^\fin$ containing $\tP$ such that for $v\in\tV_F^\fin\setminus\tS$, we have
\[
\alpha^{\chi_v}(\varphi_v,\varphi_v^\vee)=
\frac{\Delta_{n+1,v}\cdot L(\tfrac{1}{2},(\Pi_{n,v}\otimes\wt{\chi_v})\times\Pi_{n+1,v})}{L(1,\Pi_{n,v},\As^{(-1)^n})L(1,\Pi_{n+1,v},\As^{(-1)^{n+1}})}.
\]
By the refined formula, we have
\begin{align*}
&\quad\int_{H(F)\backslash H(\dA_F^\infty)}\varphi\(h^\infty\cdot(1_n,\xi)\cdot[\varpi^\fc]\)\chi(\det h^\infty)\rd h^\infty \\
&\times\int_{H(F)\backslash H(\dA_F^\infty)}\varphi^\vee\(h^\infty\cdot(1_n,\xi)\cdot[\varpi^\fc]\)\chi(\det h^\infty)^{-1}\rd h^\infty \\
&=\frac{1}{2^{d(\Pi_n)+d(\Pi_{n+1})}}\frac{\Delta_{n+1}^\tS\cdot L(\tfrac{1}{2},(\Pi_n\otimes\wt\chi)^\tS\times\Pi_{n+1}^\tS)}{L(1,\Pi_n^\tS,\As^{(-1)^n})L(1,\Pi_{n+1}^\tS,\As^{(-1)^{n+1}})}
\prod_{v\in\tS}\alpha^{\chi_v}(\varphi_v,\varphi_v^\vee).
\end{align*}
Plugging in and by (T2), we have
\begin{align*}
&\sL^0_{\cE_\tP}(\Pi)(\chi) \\
&=\frac{1}{2^{d(\Pi_n)+d(\Pi_{n+1})}}\frac{\Delta_{n+1}^\tP\cdot L(\tfrac{1}{2},(\Pi_n\otimes\wt\chi)^\tP\times\Pi_{n+1}^\tP)}{L(1,\Pi_n^\tP,\As^{(-1)^n})L(1,\Pi_{n+1}^\tP,\As^{(-1)^{n+1}})}
\prod_{v\in\tP}\frac{\gamma(\Pi_v)}{\langle\varphi^\vee_v,\varphi_v\rangle_{\pi_v}}
\(\frac{q_v^{\frac{n(n+1)(2n+1)}{3}}}{\omega(\Pi_v)}\)^{\fc_v}\alpha^{\chi_v}(\varphi_v,\varphi_v^\vee).
\end{align*}
Finally, by (T1) and Proposition \ref{pr:ordinary}, we have
\begin{align*}
\sL^0_{\cE_\tP}(\Pi)(\chi)
=\frac{1}{2^{d(\Pi_n)+d(\Pi_{n+1})}}\frac{\Delta_{n+1}\cdot L(\tfrac{1}{2},(\Pi_n\otimes\wt\chi)^\tP\times\Pi_{n+1}^\tP)}{L(1,\Pi_n,\As^{(-1)^n})L(1,\Pi_{n+1},\As^{(-1)^{n+1}})}
\prod_{v\in\tP}\(\frac{q_v^{\frac{n(n+1)(2n+1)}{6}}}{\omega(\Pi_v)}\)^{\fc_v}.
\end{align*}
The interpolation formula follows as $L(\tfrac{1}{2},(\Pi_{n,v}\otimes\wt{\chi_v})\times\Pi_{n+1,v})=1$ (since $\chi_v$ is ramified) for every $v\in\tP$.
\end{proof}

We have the following corollary on the functional equation satisfied by $\sL^0_{\cE_\tP}(\Pi)$.

\begin{corollary}
We have
\[
\sL^0_{\cE_\tP}(\Pi)^\dag=\sL^0_{\cE_\tP}(\Pi^\vee).
\]
\end{corollary}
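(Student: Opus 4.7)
The plan is to reduce the claimed identity in $\dL[[\Gamma_\tP]]^\circ$ to a pointwise check using Theorem \ref{th:function}. Since a measure on $\Gamma_\tP$ is determined by its values on finite characters ramified at every place above $\tP$ (as recorded at the end of Section 3), and since the definition of $\dag$ in Notation \ref{no:dagger} gives $\mu^\dag(\chi)=\mu(\chi^{-1})$ for every character $\chi$, it suffices to show
\[
\sL^0_{\cE_\tP}(\Pi)(\chi^{-1})=\sL^0_{\cE_\tP}(\Pi^\vee)(\chi)
\]
for every such $\chi$. Observe that $\chi$ and $\chi^{-1}$ share the same conductor, so the elementary $q_v$-power factors of Theorem \ref{th:function} agree on the two sides up to replacing $\omega(\Pi_v)$ by $\omega(\Pi_v^\vee)$.

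After fixing an embedding $\iota\colon\dL_\chi\to\dC$ and applying the interpolation formula of Theorem \ref{th:function} to both sides, the identity reduces to four comparisons: (i) $\omega(\Pi_v^\vee)=\omega(\Pi_v)$ for every $v\in\tP$, immediate from Notation \ref{no:ordinary} since $\omega(\Pi_v)=\omega(\pi_v)\omega(\pi_v^\vee)$ is symmetric in $\pi_v$ and $\pi_v^\vee$; (ii) $d(\Pi_N^\vee)=d(\Pi_N)$ for $N=n,n+1$, because dualization induces a bijection on the set of isobaric summands of $\Pi_N^{(\iota)}$ (note that $(\Pi_u^{[N]})^\vee\simeq\Pi_u^{[N]}$, so $\Pi^\vee$ is again relevant); (iii) $L(1,\iota\Pi_N^\vee,\As^{(-1)^N})=L(1,\iota\Pi_N,\As^{(-1)^N})$; and (iv) the Rankin--Selberg equality
\[
L\bigl(\tfrac12,\iota(\Pi_n\otimes\wt{\chi^{-1}})\times\iota\Pi_{n+1}\bigr)
=L\bigl(\tfrac12,\iota(\Pi_n^\vee\otimes\wt\chi)\times\iota\Pi_{n+1}^\vee\bigr).
\]

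For (iii) and (iv), the essential input is the conjugate self-duality of relevant representations, $(\iota\Pi_N)^\tc\simeq(\iota\Pi_N)^\vee$, which is built into Definition \ref{de:relevant}. Combined with the elementary identity $\wt{\chi^{-1}}=\wt\chi\circ\tc$ (a direct consequence of $\Nm_{E/F}^-(a^\tc)=\Nm_{E/F}^-(a)^{-1}$), this gives an isomorphism
\[
(\iota\Pi_n\otimes\wt{\chi^{-1}})\times\iota\Pi_{n+1}\simeq\bigl((\iota\Pi_n^\vee\otimes\wt\chi)\times\iota\Pi_{n+1}^\vee\bigr)^\tc,
\]
after which one invokes the $\tc$-invariance of both the Rankin--Selberg and Asai $L$-functions: at split places of $F$, $\tc$ permutes the two places of $E$ above them, leaving the product of local factors unchanged; at inert or ramified places, the unique place above is fixed and the local factor depends only on $\tc$-invariant data. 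The main (minor) obstacle is keeping the various archimedean/finite-part bookkeeping straight, but no serious difficulty arises: the corollary follows formally from the interpolation formula of Theorem \ref{th:function} together with the conjugate self-duality built into the notion of a relevant representation.
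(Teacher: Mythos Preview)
Your proof is correct and follows essentially the same route as the paper's: reduce to evaluation at everywhere-ramified finite characters via the uniqueness principle for measures, apply Theorem~\ref{th:function} to both $\sL^0_{\cE_\tP}(\Pi)$ and $\sL^0_{\cE_\tP}(\Pi^\vee)$, and verify the four matching conditions on $\omega(\Pi_v)$, $d(\Pi_N)$, the Asai $L$-values, and the central Rankin--Selberg $L$-value. The only cosmetic difference is that the paper justifies (iv) by citing the classical functional equation (using that the global root number is $+1$ in the coherent case), whereas you deduce it from conjugate self-duality together with $\tc$-invariance of the Rankin--Selberg $L$-function; both arguments are valid and yield the same identity.
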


\begin{proof}
It suffices to show that for every finite character $\chi\colon\Gamma_\tP\to\dL_\chi^\times$ of conductor $\prod\fp_v^{\fc_v}$ for a tuple $\fc=(\fc_v)_{v\in\tP}$ of positive integers indexed by $\tP$ and every embedding $\iota\colon\dL_\chi\to\dC$, we have $\iota\sL^0_{\cE_\tP}(\Pi)(\chi)=\iota\sL^0_{\cE_\tP}(\Pi^\vee)(\chi^{-1})$. By Theorem \ref{th:function}, this follows from that $\omega(\Pi_v)=\omega(\Pi^\vee_v)$ for $v\in\tP$, that $d(\Pi_N)=d(\Pi^\vee_N)$ for $N=n,n+1$, that $L(1,\iota\Pi_N,\As^{(-1)^N})=L(1,\iota\Pi_N^\vee,\As^{(-1)^N})$ for $N=n,n+1$, and the classical functional equation
\[
L(\tfrac{1}{2},\iota(\Pi_n\otimes\wt\chi)\times\iota\Pi_{n+1})
=L(\tfrac{1}{2},\iota(\Pi_n^\vee\otimes\wt\chi^{-1})\times\iota\Pi^\vee_{n+1}).
\]
\end{proof}

\begin{notation}\label{no:quotient}
For every $\tP$-extension $\cE/E$ (Definition \ref{de:extension}), we denote by $\sL^0_\cE(\Pi)$ the image of $\sL^0_{\cE_\tP}(\Pi)$ under the natural homomorphism $\dL[[\Gamma_\tP]]^\circ\to\dL[[\Gal(\cE/E)]]^\circ$.
\end{notation}

\begin{remark}
We warn the readers that for a subset $\tP'$ of $\tP$, $\sL^0_{\cE_{\tP'}}(\Pi)$ is generally different from the image of $\sL^0_{\cE_\tP}(\Pi)$ under the natural homomorphism $\dL[[\Gamma_\tP]]^\circ\to\dL[[\Gamma_{\tP'}]]^\circ$.
\end{remark}

We propose the following nonvanishing conjecture.

\begin{conjecture}\label{co:vanishing1}
Suppose that $\dV_\Pi\neq 0$ and condition ($*$) in Notation \ref{no:galois} holds. Then $\sL^0_\cE(\Pi)\neq 0$ for every $\tP$-extension $\cE/E$ that contains $\cE_{\tP'}$ for some nonempty subset $\tP'$ of $\tP$.
\end{conjecture}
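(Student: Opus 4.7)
The strategy I would pursue generalizes the Cornut--Vatsal approach from the $\GL_2$ setting. Since $\sL^0_\cE(\Pi)\in\dL[[\Gal(\cE/E)]]^\circ$ is a bounded measure, the nonvanishing $\sL^0_\cE(\Pi)\neq 0$ is equivalent to finding a single finite-order character $\chi$ of $\Gal(\cE/E)$ with $\sL^0_{\cE_\tP}(\Pi)(\chi)\neq 0$. The assumption $\cE\supseteq\cE_{\tP'}$ ensures that one can choose $\chi$ whose restriction to $\Gamma_{\tP'}$ is ramified at every place of $\tP'$. By Theorem \ref{th:function}, one is reduced to exhibiting such a $\chi$ with $L(\tfrac{1}{2},(\Pi_n\otimes\wt\chi)\times\Pi_{n+1})\neq 0$, and via the refined Gan--Gross--Prasad formula in the proof of Theorem \ref{th:function} this further reduces to the nonvanishing of the global Bessel period
\[
\int_{H(F)\backslash H(\dA_F^\infty)}\varphi\bigl(h^\infty(1_n,\xi)[\varpi^\fc]\bigr)\,\chi(\det h^\infty)\,\rd h^\infty
\]
and its counterpart for some $\varphi^\vee\in\cV^\vee$, where $\fc=(\fc_v)_{v\in\tP'}$ is the conductor of $\chi$ and $\varphi\in\cV$ is ordinary at $\tP$.

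The decisive, and hardest, step is then an equidistribution statement: as $|\fc|\to\infty$, the toric orbits labeled by $\Gamma_{\tP'}$, translated by $(1_n,\xi)[\varpi^\fc]$, should equidistribute in the Shimura set $G(F)\backslash G(\dA_F^\infty)/K$ with respect to the natural probability measure. For $n=1$ this is essentially the Cornut--Vatsal theorem, proved via Ratner's theorem for unipotent flows on $S$-arithmetic quotients combined with strong approximation. For $n>1$ the orbits are higher-dimensional and the argument is substantially harder: one needs Mozes--Shah--style rigidity of measures invariant under the diagonally embedded torus inside $H(F_{\tP'})$, coupled with the observation that translation by $(1_n,\xi)[\varpi^\fc]$ mixes the toric direction (from $[\varpi^\fc]$) with unipotent directions arising from $\xi$, so that unipotent rigidity can be brought to bear. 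Once equidistribution is established, a spectral decomposition isolating the $\pi$-component of $\varphi$ forces the Bessel period to be nonzero for all $|\fc|$ large. This is the main obstacle.

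Condition $(*)$ plays the complementary role of ruling out trivial-zero obstructions on the Galois side: if $\rho_{\Pi_n}\otimes\rho_{\Pi_{n+1}}$ admitted an inconvenient reducible piece, the global root number of $(\Pi_n\otimes\wt\chi)\times\Pi_{n+1}$ could be forced to $-1$ on an entire anticyclotomic family, killing the central $L$-value identically and forcing $\sL^0_\cE(\Pi)=0$; condition $(*)$ prevents this. A more powerful refinement, likely needed for the applications in \cite{LTX}, would establish mod-$p$ nonvanishing directly via Hida theory at the ordinary primes in $\tP$, combined with Chai's rigidity of formal tori in the style of Hsieh and Burungale; such an approach would bypass the analytic equidistribution altogether and upgrade complex to $p$-adic nonvanishing in one stroke.
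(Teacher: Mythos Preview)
The statement you are attempting to prove is a \emph{conjecture} in the paper, not a theorem: the paper offers no proof and explicitly records only that the case $n=1$ is known by Cornut--Vatsal \cite{CV07}. So there is no ``paper's own proof'' to compare against; your proposal is a strategy sketch for an open problem.

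As a strategy, your reduction is sound: nonvanishing of the measure $\sL^0_\cE(\Pi)$ is equivalent to nonvanishing at a single finite-order character, and since a $\tP$-extension is by Definition~\ref{de:extension} ramified exactly at the primes above $\tP$, one can choose $\chi$ ramified at \emph{every} place above $\tP$ (not merely above $\tP'$), so that Theorem~\ref{th:function} applies and the question becomes the nonvanishing of $L(\tfrac{1}{2},(\Pi_n\otimes\wt\chi)\times\Pi_{n+1})$ for some such $\chi$. You correctly identify the equidistribution of the translated toric orbits as the decisive obstacle, and that for $n=1$ this is precisely the Cornut--Vatsal input. For $n>1$ no such equidistribution theorem is available; your proposal does not supply one, so what you have written is an outline, not a proof.

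Your explanation of condition~$(*)$ is incorrect. In the coherent case $\epsilon(\Pi_\chi)=\epsilon(\Pi)=+1$ for \emph{every} anticyclotomic $\chi$ (this is established in Section~\ref{ss:ggp} and does not use~$(*)$), and by Remark~\ref{re:vanishing1} the hypothesis $\dV_\Pi\neq 0$ already forces each of the $d(\Pi_n)d(\Pi_{n+1})$ isobaric root numbers to be $+1$; anticyclotomic twisting preserves all of these signs. So no root number is ever ``forced to $-1$'' and~$(*)$ cannot be playing the role you describe. The condition~$(*)$ is a genuine irreducibility hypothesis on the Galois side whose precise role in an eventual proof the paper does not spell out; whatever it rules out, it is not a sign obstruction.
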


The conjecture is known for $n=1$ \cite{CV07}.

\begin{remark}\label{re:vanishing1}
Take an arbitrary embedding $\iota\colon\dL\to\dC$. Then $\epsilon(\frac{1}{2},\Pi_n^{(\iota)}\times\Pi_{n+1}^{(\iota)})$ equals $\epsilon(\Pi)$, which we have assumed to be $1$. Now $\epsilon(\frac{1}{2},\Pi_n^{(\iota)}\times\Pi_{n+1}^{(\iota)})$ decomposes as the product of $d(\Pi_n)d(\Pi_{n+1})$ root numbers (valued in $\{\pm 1\}$) for their isobaric factors. Then by the discussion in \cite{GGP12}*{\S26}, $\dV_\Pi\neq 0$ if and only if all those root numbers are equal to $1$.
\end{remark}

The proposition below will be used in \cite{LTX}, which can be skipped for now. Denote by $\tV_F^\Pi$ the finite subset of $\tV_F^\fin$ containing $\tV_F^\ram$ away from which $\Pi$ is unramified. For every $v\in\tV_F^\fin\setminus(\tV_F^\Pi\cup\tP)$, choose an integrally self-dual lattice $L_{n,v}$ of $V_{n,v}$ and put $K_v\coloneqq K_{n,v}\times K_{n+1,v}$, where $K_{n,v}$ and $K_{n+1,v}$ are the stabilizers of $L_{n,v}$ and $L_{n,v}\oplus O_{E_v}\cdot\te$, respectively.

\begin{proposition}
Suppose further that $\tP\cap\tV_F^\Pi=\emptyset$. There exists an element $\varphi=\otimes_{v\in\tV_F^\fin}\varphi_v\in\cV$ with $\varphi_v\in\pi_v$ an ordinary vector for $v\in\tP$ and $\varphi_v\in\pi_v^{K_v}$ for $v\in\tV_F^\fin\setminus(\tV_F^\Pi\cup\tP)$, such that the ideals of $\dL[[\Gamma_\tP]]^\circ$ generated by $\sL^0_{\cE_\tP}(\Pi)$ and $\sP_\varphi^2$ are the same.
\end{proposition}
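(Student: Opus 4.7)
The plan is to apply the construction in Theorem~\ref{th:function} to a single compatible pair $(\varphi,\varphi^\vee)$ and then to identify the two resulting measures $\sP_\varphi$ and $\sP_{\varphi^\vee}^\dag$ up to a unit of $\dL[[\Gamma_\tP]]^\circ$.

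First, I would choose $\varphi=\otimes_v\varphi_v\in\cV$ exactly as demanded by the proposition: $\varphi_v$ is an ordinary vector for $v\in\tP$; $\varphi_v\in\pi_v^{K_v}$ for $v\in\tV_F^\fin\setminus(\tV_F^\Pi\cup\tP)$; and $\varphi_v$ is arbitrary (but fixed) at the ramified places $v\in\tV_F^\Pi\setminus\tP$. Then I would select a companion $\varphi^\vee=\otimes_v\varphi_v^\vee\in\cV^\vee$ with parallel local structure, namely ordinary at $\tP$ (using Remark~\ref{re:ordinary}), spherical at unramified non-$\tP$ places, and at each ramified place chosen so that $\alpha(\varphi_v,\varphi_v^\vee)\neq 0$, which is possible by Lemma~\ref{le:matrix1}.

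Under the hypothesis $\tP\cap\tV_F^\Pi=\emptyset$, every split place $v$ at which $\chi_v$ could be nontrivial lies in $\tP$, so at each split unramified place outside $\tP$ the character $\chi_v$ is trivial. Consequently Lemma~\ref{le:matrix2} (rather than Proposition~\ref{pr:matrix}) supplies the spherical matrix-coefficient identity needed in condition (T2). Repeating verbatim the computation in the proof of Theorem~\ref{th:function} then yields
\[
\sL^0_{\cE_\tP}(\Pi) \;=\; C\cdot \sP_\varphi\cdot\sP_{\varphi^\vee}^\dag
\]
with $C\in\dL^\times$ an explicit product of local constants at $\tP\cup(\tV_F^\Pi\setminus\tP)$. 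This reduces the proposition to verifying that $(\sP_\varphi)=(\sP_{\varphi^\vee}^\dag)$ as ideals of $\dL[[\Gamma_\tP]]^\circ$, or equivalently that $\sP_{\varphi^\vee}^\dag = u\cdot\sP_\varphi$ for a unit $u\in\dL[[\Gamma_\tP]]^\circ$.

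For this last step I would compute, $v$-by-$v$ at $v\in\tP$, the local contributions to both measures using the Whittaker-model analysis of Januszewski~\cite{Jan11} exactly as in the proof of Proposition~\ref{pr:ordinary}. At such $v$, the ordinary lines of $\pi_v$ and $\pi_v^\vee$ are each one-dimensional (Lemma~\ref{le:ordinary}), so with the normalizations determined by matching ordinary vectors through the Petersson pairing, the local discrepancy between $\sP_{\varphi^\vee}^\dag$ and $\sP_\varphi$ on a $\fc$-cell factors through an expression of the form $\prod_{v\in\tP}\bigl(\omega(\pi_v)/\omega(\pi_v^\vee)\bigr)^{\fc_v}$ times Gauss-sum ratios of the type $G_{\psi_{F_v}}(\chi_v)/G_{\psi_{F_v}^{-1}}(\chi_v^{-1})$ that appear in the Birch-lemma expansion. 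Using $G_{\psi_F}(\chi)G_{\psi_F^{-1}}(\chi^{-1})=q^\fc$ and the fact that $\omega(\pi_v),\omega(\pi_v^\vee)\in\dL^{\circ\times}$, these factors assemble into the image of a character of $\Gamma_\tP$ in $\dL^\circ[[\Gamma_\tP]]$; any such group-like element is a unit. Outside $\tP$, the spherical and ramified normalizations contribute only scalars in $\dL^\times$. Combining yields $\sP_{\varphi^\vee}^\dag=u\cdot\sP_\varphi$ with $u\in\dL[[\Gamma_\tP]]^\circ$ a unit, from which the claimed equality of ideals
\[
(\sL^0_{\cE_\tP}(\Pi))\;=\;(C\,u)\cdot(\sP_\varphi^2)\;=\;(\sP_\varphi^2)
\]
follows.

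The main obstacle is the final step: making precise, via a local Whittaker computation at each $v\in\tP$, that the discrepancy between $\sP_\varphi$ and $\sP_{\varphi^\vee}^\dag$ really is a group-like (hence unit) element of $\dL^\circ[[\Gamma_\tP]]$ rather than merely a bounded element. Proposition~\ref{pr:ordinary} and the Gauss-sum identities recalled in its proof give the essential input, but one must also verify that no extra $\fc$-dependent non-unit factors appear when passing between $\chi$ and $\chi^{-1}$ on the two sides of the Ichino--Ikeda formula.
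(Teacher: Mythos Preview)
Your reduction to $\sL^0_{\cE_\tP}(\Pi)=C\cdot\sP_\varphi\cdot\sP_{\varphi^\vee}^\dag$ is fine (modulo the harmless slip that $\chi_v$ need not be \emph{trivial} at split places outside $\tP$, only unramified---Lemma~\ref{le:matrix2} still applies). The real gap is in the final step: you cannot compare $\sP_\varphi$ and $\sP_{\varphi^\vee}^\dag$ by a ``$v$-by-$v$'' computation, because these are \emph{global} period integrals of automorphic forms that do not factor as products of local quantities. The Ichino--Ikeda formula only gives you their product $\sP_\varphi\cdot\sP_{\varphi^\vee}^\dag$, never the individual factors; so nothing prevents, a priori, a situation like $\sP_\varphi=fg$ and $\sP_{\varphi^\vee}^\dag=fh$ with $g,h$ coprime non-units, in which case the ideals $(\sP_\varphi^2)$ and $(\sP_\varphi\cdot\sP_{\varphi^\vee}^\dag)$ differ.

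The paper closes this gap by a global trick you are missing: it fixes an MVW involution $*$ on $G$ stabilizing $H$ (transpose-inverse at split places), and takes $\varphi^\vee\coloneqq\varphi^*$ with $\varphi^*(g)=\varphi(g^*)$. The change of variables $h\mapsto h^*$ on $H(\dA_F)$ then gives the \emph{global} identity
\[
\int_{H(F)\backslash H(\dA_F^\infty)}(\varphi_\fc)^*(h)\chi^{-1}(\det h)\,dh
=\int_{H(F)\backslash H(\dA_F^\infty)}\varphi_\fc(h)\chi(\det h)\,dh,
\]
so that the product of the two periods is literally $\sP_\varphi(\chi)^2$ (up to the normalizing factor). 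Only \emph{after} this is the local Whittaker analysis invoked---not to compare two distinct global periods, but to evaluate the local factors $\alpha^{\chi_v}(\varphi_{\fc,v},(\varphi_{\fc,v})^*)$: one uses $W_{\varphi^*_v}=c'_v\widetilde{W_{\varphi_v}}$ and the local Rankin--Selberg functional equation, which produces an $\varepsilon$-factor contributing $\chi(\gamma)$ for an explicit $\gamma\in\Gamma_\tP$. This yields $\sP_\varphi^2=C\cdot\gamma\cdot\sL^0_{\cE_\tP}(\Pi)$ with $\gamma$ a unit in $\dL^\circ[[\Gamma_\tP]]$, whence the equality of ideals. Your Gauss-sum and $\omega(\pi_v)/\omega(\pi_v^\vee)$ heuristics are the right ingredients for this local step, but they only become relevant once the MVW involution has collapsed the two global periods into one.
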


\begin{proof}
We fix an MVW involution $*$ on $G$ \cite{MVW} that stabilizes $H$. For every $v\in\tV_F^\spl$, we
\begin{itemize}
  \item fix a standard isomorphism $G_v\simeq\GL_{n,F_v}\times\GL_{n+1,F_v}$ in the sense of Definition \ref{de:standard}, under which $*$ coincides with the transpose-inverse;

  \item denote by $\gamma_v$ the image of any uniformizer of $F_v$ in $\Gamma_\tP$ with respect to the isomorphism $F_v^\times\simeq E^{\times-}_v$ induced by the isomorphism above;

  \item fix an additive character $\psi_v\colon F_v\to\dC^\times$ of conductor $O_{F_v}$;

  \item denote by $\fc(\pi_v)\geq 0$ the Rankin--Selberg conductor of $\pi_v$, that is, the one satisfying that for every embedding $\iota\colon\dL\to\dC$, $q_v^{\fc(\pi_v)s}\cdot\epsilon(s,\iota\pi_{n,v}\times\iota\pi_{n+1,v},\psi_v)$ is a constant; it equals $0$ for $v\not\in\tV_F^\Pi$.
\end{itemize}
Put $\gamma\coloneqq\prod_{v\in\tV_F^\spl}\gamma_v^{\fc(\pi_v)}\in\Gamma_\tP$. We claim that there exists $\varphi$ as in the proposition such that
\begin{align}\label{eq:square}
\sP_\varphi^2=C\cdot\gamma\cdot\sL^0_{\cE_\tP}(\Pi)
\end{align}
for some constant $C\in\dL^\times$.

To check \eqref{eq:square}, we may fix an embedding $\dL\hookrightarrow\dC$ and show that for every finite characters $\chi\colon\Gamma_\tP\to\dC^\times$ that is \emph{ramified} everywhere in $\tP$,
\begin{align}\label{eq:square1}
\sP_\varphi(\chi)^2=C\cdot\chi(\gamma)\cdot\sL^0_{\cE_\tP}(\Pi)(\chi)
\end{align}
holds for some constant $C\in\dC^\times$. For every $v\in\tV_F^\spl$, we fix Whittaker models $\pi_v\otimes_\dL\dC\simeq\cW(\pi_v)_{\psi_v}$ and $\pi^\vee_v\otimes_\dL\dC\simeq\cW(\pi_v^\vee)_{\psi_v^{-1}}$, and will denote by $W_-$ the corresponding Whittaker function. Note that for every $W\in\cW(\pi_v)_{\psi_v}$, we have $\widetilde{W}\in\cW(\pi_v^\vee)_{\psi_v^{-1}}$, where $\widetilde{W}$ satisfies $\widetilde{W}(g)=W(w\pres{\rt}g^{-1})$ in which $w$ denotes the anti-diagonal Weyl element in $\GL_n(F_v)\times\GL_{n+1}(F_v)$.

For every element $\varphi=\otimes_v\varphi_v\in\cV$, we have the function $\varphi^*$ defined by the formula $\varphi^*(g)=\varphi(g^*)$. Then $\varphi^*=\otimes_v\varphi^*_v$ is again decomposable and belongs to $\cV^\vee$. Now we may choose an element $\varphi=\otimes_v\varphi_v$ as in the proposition satisfying
\begin{enumerate}
  \item for every $v\in\tV_F^\spl\setminus\tP$, there exists a constant $c_v\in\dC^\times$ (which equals $1$ if $v\not\in\tV_F^\Pi$) such that
      \[
      \int_{U_n(F_v)\backslash\GL_n(F_v)}W_{\varphi_v}(h)\cdot|\det h|_F^s\rd h=c_v\cdot L(\tfrac{1}{2}+s,\pi_{n,v}\times\pi_{n+1,v})
      \]
      holds as meromorphic functions in the variable $s\in\dC$, where $U_n$ denotes the upper-triangular unipotent subgroup of $\GL_n$ regarded as the diagonal subgroup of $\GL_n\times\GL_{n+1}$.

  \item for every $v\in\tV_F^\fin\setminus\tV_F^\spl$, $\alpha(\varphi_v,\varphi^*_v)\neq 0$.
\end{enumerate}
For every $v\in\tV_F^\spl$, there exists a constant $c'_v\in\dC^\times$ such that $W_{\varphi^*_v}=c'_v\widetilde{W_{\varphi_v}}$.

Now we compute $\sP_\varphi(\chi)^2$ for a finite character $\chi\colon\Gamma_\tP\to\dC^\times$ of conductor $\prod\fp_v^{\fc_v}$ for a tuple $\fc=(\fc_v)_{v\in\tP}$ of positive integers indexed by $\tP$. We have
\begin{align*}
\sP_\varphi(\chi)^2&=\prod_{v\in\tP}\(\frac{q_v^{\frac{n(n+1)(2n+1)}{6}}}{\omega(\pi_v)}\)^{2\fc_v}\cdot
\(\int_{H(F)\backslash H(\dA_F^\infty)}\varphi_\fc(h^\infty)\chi(\det h^\infty)\rd h^\infty\)^2 \\
&=\prod_{v\in\tP}\(\frac{q_v^{\frac{n(n+1)(2n+1)}{6}}}{\omega(\pi_v)}\)^{2\fc_v}\cdot
\int_{H(F)\backslash H(\dA_F^\infty)}\varphi_\fc(h^\infty)\chi(\det h^\infty)\rd h^\infty
\int_{H(F)\backslash H(\dA_F^\infty)}(\varphi_\fc)^*(h^\infty)\chi^{-1}(\det h^\infty)\rd h^\infty,
\end{align*}
where $\varphi_\fc\coloneqq\pi((1_n,\xi)\cdot[\varpi^\fc])\varphi$. Then by property (2) in the choice of $\varphi$, the refined Gan--Gross--Prasad formula, and the computation in Proposition \ref{pr:matrix}, there exists a constant $C\in\dC^\times$, independent of $\chi$, such that
\begin{align*}
\sP_\varphi(\chi)^2&=C\cdot\prod_{v\in\tP}\(\frac{q_v^{\frac{n(n+1)(2n+1)}{6}}}{\omega(\pi_v)}\)^{2\fc_v}\cdot
L(\tfrac{1}{2},(\Pi_n\otimes\wt\chi)\times\Pi_{n+1}) \\
&\times\prod_{v\in\tV_F^\spl}
\frac{\int_{U_n(F_v)\backslash\GL_n(F_v)}W_{\varphi_{\fc,v}}(h)\cdot\chi_v(\det h)\rd h}{L(\tfrac{1}{2},(\pi_{n,v}\otimes\chi_v)\times\pi_{n+1,v})}
\frac{\int_{U_n(F_v)\backslash\GL_n(F_v)}\widetilde{W_{\varphi_{\fc,v}}}(h)\cdot\chi_v^{-1}(\det h)\rd h}{L(\tfrac{1}{2},(\pi^\vee_{n,v}\otimes\chi^{-1}_v)\times\pi^\vee_{n+1,v})}.
\end{align*}
Now for $v\in\tV_F^\spl\setminus\tP$, the functional equation of the local Rankin--Selberg integrals implies that
\begin{align*}
&\quad\frac{\int_{U_n(F_v)\backslash\GL_n(F_v)}\widetilde{W_{\varphi_{\fc,v}}}(h)\cdot\chi_v^{-1}(\det h)\rd h}{L(\tfrac{1}{2},(\pi^\vee_{n,v}\otimes\chi^{-1}_v)\times\pi^\vee_{n+1,v})} \\
&=\varepsilon(\tfrac{1}{2},(\pi_{n,v}\otimes\chi_v)\times\pi_{n+1,v},\psi_v)
\cdot\frac{\int_{U_n(F_v)\backslash\GL_n(F_v)}W_{\varphi_{\fc,v}}(h)\cdot\chi_v(\det h)\rd h}{L(\tfrac{1}{2},(\pi_{n,v}\otimes\chi_v)\times\pi_{n+1,v})} \\
&=c''_v\cdot\chi_v(\gamma_v^{\fc(\pi_v)})\cdot\frac{\int_{U_n(F_v)\backslash\GL_n(F_v)}W_{\varphi_{\fc,v}}(h)\cdot\chi_v(\det h)\rd h}{L(\tfrac{1}{2},(\pi_{n,v}\otimes\chi_v)\times\pi_{n+1,v})}
\end{align*}
for a constant $c''_v\in\dC^\times$ independent of $\chi$. Then by property (1) in the choice of $\varphi$, one can modify the constant $C\in\dC^\times$, independent of $\chi$, such that
\begin{align*}
\sP_\varphi(\chi)^2&=C\cdot\chi(\gamma)\cdot\prod_{v\in\tP}\(\frac{q_v^{\frac{n(n+1)(2n+1)}{6}}}{\omega(\pi_v)}\)^{2\fc_v}\cdot
L(\tfrac{1}{2},(\Pi_n\otimes\wt\chi)\times\Pi_{n+1}) \\
&\times\prod_{v\in\tP}
\frac{\int_{U_n(F_v)\backslash\GL_n(F_v)}W_{\varphi_{\fc,v}}(h)\cdot\chi_v(\det h)\rd h}{L(\tfrac{1}{2},(\pi_{n,v}\otimes\chi_v)\times\pi_{n+1,v})}
\frac{\int_{U_n(F_v)\backslash\GL_n(F_v)}\widetilde{W_{\varphi_{\fc,v}}}(h)\cdot\chi_v^{-1}(\det h)\rd h}{L(\tfrac{1}{2},(\pi^\vee_{n,v}\otimes\chi^{-1}_v)\times\pi^\vee_{n+1,v})}.
\end{align*}
Now for $v\in\tP$, since $\pi_v$ is unramified, a direct computation shows that
\[
\varepsilon(\tfrac{1}{2},(\pi_{n,v}\otimes\chi_v)\times\pi_{n+1,v},\psi_v)=\(\frac{\omega(\pi_v)}{\omega(\pi_v^\vee)}\)^{\fc_v}
\cdot\(q_v^{-\frac{n(n+1)}{2}}\)^{\fc_v}\cdot G_{\psi_v^{-1}}(\chi_v^{-1})^{n(n+1)}.
\]
By \cite{Jan11}*{Corollary~2.8}, there exists a constant $c''_v\in\dC^\times$ such that
\[
\frac{\int_{U_n(F_v)\backslash\GL_n(F_v)}W_{\varphi_{\fc,v}}(h)\cdot\chi_v(\det h)\rd h}{L(\tfrac{1}{2},(\pi_{n,v}\otimes\chi_v)\times\pi_{n+1,v})}
=c''_v\cdot\(q_v^{-\frac{n(n+1)(n+2)}{6}}\)^{\fc_v}\cdot G_{\psi_v}(\chi_v)^{\frac{n(n+1)}{2}},
\]
hence
\[
\frac{\int_{U_n(F_v)\backslash\GL_n(F_v)}\widetilde{W_{\varphi_{\fc,v}}}(h)\cdot\chi_v^{-1}(\det h)\rd h}{L(\tfrac{1}{2},(\pi^\vee_{n,v}\otimes\chi^{-1}_v)\times\pi^\vee_{n+1,v})}
=c''_v\cdot\(\frac{\omega(\pi_v)}{\omega(\pi_v^\vee)}\)^{\fc_v}\cdot\(q_v^{-\frac{n(n+1)(n+2)}{6}}\)^{\fc_v}\cdot G_{\psi_v^{-1}}(\chi_v^{-1})^{\frac{n(n+1)}{2}}.
\]
In summary, we can again modify the constant $C\in\dC^\times$, independent of $\chi$, such that
\begin{align*}
\sP_\varphi(\chi)^2&=C\cdot\chi(\gamma)\cdot L(\tfrac{1}{2},(\Pi_n\otimes\wt\chi)\times\Pi_{n+1})\cdot
\prod_{v\in\tP}\(\frac{q_v^{\frac{n(n+1)(n-1)}{3}}}{\omega(\pi_v)\omega(\pi_v^\vee)}\)^{\fc_v}
\cdot G_{\psi_v}(\chi_v)^{\frac{n(n+1)}{2}}G_{\psi_v^{-1}}(\chi_v^{-1})^{\frac{n(n+1)}{2}} \\
&=C\cdot\chi(\gamma)\cdot\prod_{v\in\tP}\(\frac{q_v^{\frac{n(n+1)(2n+1)}{6}}}{\omega(\Pi_v)}\)^{\fc_v}
\cdot L(\tfrac{1}{2},(\Pi_n\otimes\wt\chi)\times\Pi_{n+1}).
\end{align*}
Thus, \eqref{eq:square1} holds and the proposition is proved.
\end{proof}

\section{Iwasawa Selmer groups and height pairing}
\label{ss:selmer}

In this section, $\tP$ is again a subset of $\tP(\Pi)$ \eqref{eq:ordinaryp}.

Take a subfield $\cE\subseteq \cE_\tP$ containing $E$. Let $\dW$ be a geometric Galois representation of $E$ with coefficients in $\dL$.\footnote{Recall that this means the representation is continuous, unramified outside a finite set of nonarchimedean places of $E$, and is de Rham at every $p$-adic places of $E$.} Take a $\Gal(\ol\dQ/E)$-stable $\dL^\circ$-lattice $\dW^\circ$ of $\dW$.

For every finite extension $E'/E$ contained in $\dC$ and every nonarchimedean place $u$ of $E'$, we have a distinguished subspace $\rH^1_f(E'_u,\dW)\subseteq\rH^1(E'_u,\dW)$ (known as the Bloch--Kato Selmer condition) as follows:
\[
\rH^1_f(E'_u,\dW)\coloneqq
\begin{dcases}
\rH^1_\unr(E'_u,\dW),& \text{if $u$ is not above $p$},\\
\Ker\(\rH^1(E'_u,\dW)\to\rH^1(E'_u,\dW\otimes_{\dQ_p}\dB_\cris)\),&\text{if $u$ is above $p$}.
\end{dcases}
\]
We then define $\rH^1_f(E'_u,\dW/\dW^\circ)$ (resp.\ $\rH^1_f(E'_u,\dW^\circ)$) to be the propagation of $\rH^1_f(E'_u,\dW)$, that is, the image (resp.\ preimage) of $\rH^1_f(E'_u,\dW)$ along the natural map $\rH^1(E'_u,\dW)\to\rH^1(E'_u,\dW/\dW^\circ)$ (resp.\ $\rH^1(E'_u,\dW^\circ)\to\rH^1(E'_u,\dW)$). The global Bloch--Kato Selmer groups are defined as
\begin{align*}
\rH^1_f(E',\dW/\dW^\circ)&\coloneqq\Ker\(\rH^1(E',\dW/\dW^\circ)\to
\prod_{u<\infty}\frac{\rH^1(E'_u,\dW/\dW^\circ)}{\rH^1_f(E'_u,\dW/\dW^\circ)}\), \\
\rH^1_f(E',\dW^\circ)&\coloneqq\Ker\(\rH^1(E',\dW^\circ)\to
\prod_{u<\infty}\frac{\rH^1(E'_u,\dW^\circ)}{\rH^1_f(E'_u,\dW^\circ)}\).
\end{align*}
Finally, we put
\begin{align*}
\sX(\cE,\dW^\circ)&\coloneqq\varprojlim_{E\subseteq E'\subseteq\cE}\Hom_{\dL^\circ}\(\rH^1_f(E',\dW/\dW^\circ),\dL/\dL^\circ\),\\
\sS(\cE,\dW^\circ)&\coloneqq\varprojlim_{E\subseteq E'\subseteq\cE}\rH^1_f(E',\dW^\circ)
\end{align*}
as $\dL^\circ[[\Gal(\cE/E)]]$-modules, where the transition maps are the (dual of) restriction maps and the corestriction maps, respectively.

\begin{definition}\label{no:selmer}
We define
\begin{align*}
\sX(\cE,\dW)&\coloneqq\sX(\cE,\dW^\circ)\otimes_{\dL^\circ}\dL,\\
\sS(\cE,\dW)&\coloneqq\sS(\cE,\dW)\otimes_{\dL^\circ}\dL,
\end{align*}
which are finitely generated $\dL[[\Gal(\cE/E)]]^\circ$-modules independent of the choice of $\dW^\circ$.
\end{definition}

We say that $\dW$ is \emph{pure} if it satisfies conditions (B,D) in \cite{Nek93}*{(2.1.2)}. It is clear that if $\dW$ is pure, then so is $\dW^\vee(1)$.

\begin{example}\label{ex:galois}
Let $\dW_\Pi$ be the $\dL[\Gal(\ol\dQ/E)]$-module underlying the representation $\rho_{\Pi_n}\otimes\rho_{\Pi_{n+1}}(n)$ (Notation \ref{no:galois}).
\begin{enumerate}
  \item First, $\dW_\Pi$ is pure. Indeed, by \cite{Car12}*{Theorem~1.1~\&~Theorem~1.2}, \cite{Car14}*{Theorem~1.1}, and \cite{TY07}*{Lemma~1.4}, we know that $\rho_{\Pi_n}\otimes\rho_{\Pi_{n+1}}(n)$ is pure of weight $-1$ (in the sense of \cite{TY07}) at every $u\in\tV_E^\fin$, which implies that $\dW_\Pi$ is pure.

  \item Second, we have for every finite extension $E'/E$ contained in $\cE_\tP$ a \emph{conjugation isomorphism} $\rH^1(E',\dW_\Pi)\xrightarrow\sim\rH^1(E',\dW_\Pi^\tc)\simeq\rH^1(E',\dW_{\Pi^\vee})$ by taking $\tc$-conjugation of extension classes; it is $\dL$-linear, but \emph{inverse} $\Gal(E'/E)$-equivariant.

  \item The system of conjugation isomorphisms induces an isomorphism
      \[
      -^\dag\colon\sS(\cE,\dW_{\Pi^\vee})\xrightarrow\sim\sS(\cE,\dW_\Pi)
      \]
      that is $(\dL[[\Gal(\cE/E)]]^\circ,\dag)$-linear.
\end{enumerate}

\end{example}

\begin{notation}\label{no:iwasawa}
Suppose that $\Gal(\cE/E)$ is torsion free so that $\dL[[\Gal(\cE/E)]]^\circ$ is integral. For a finitely generated $\dL[[\Gal(\cE/E)]]^\circ$-module $\sM$, we denote by $\sM^\tor$ its maximal $\dL[[\Gal(\cE/E)]]^\circ$-torsion submodule, and by
\[
\Char(\sM^\tor)\subseteq\dL[[\Gal(\cE/E)]]^\circ
\]
the characteristic ideal of $\sM^\tor$ (in particular, $\Char\(\sM\)=\Char(\sM^\tor)$ when $\sM$ is already torsion).
\end{notation}

\begin{conjecture}[Iwasawa's main conjecture in the coherent case]\label{co:main1}
Suppose that $\Pi$ is coherent and $\cE/E$ is a $\tP$-extension (Definition \ref{de:extension}).
\begin{enumerate}
  \item The following are equivalent:
     \begin{enumerate}
       \item $\sL^0_\cE(\Pi)$ is nonzero;

       \item the $\dL[[\Gal(\cE/E)]]^\circ$-module $\sX(\cE,\dW_\Pi)$ is of rank zero;

       \item the $\dL[[\Gal(\cE/E)]]^\circ$-module $\sS(\cE,\dW_{\Pi})$ is of rank zero.
     \end{enumerate}

  \item When the equivalent conditions in (1) hold, $\Char\(\sX(\cE,\dW_\Pi)\)$ is generated by $\sL^0_\cE(\Pi)$.
\end{enumerate}
\end{conjecture}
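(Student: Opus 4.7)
The plan is to adapt the bipartite Euler system strategy of Bertolini--Darmon to the higher rank setting, exploiting the Bessel period framework used in Theorem \ref{th:function}. First I would address the equivalence (b) $\Leftrightarrow$ (c) of Part (1): the conjugate self-duality $\dW_\Pi^\vee(1) \simeq \dW_{\Pi^\vee}$ combined with the conjugation isomorphism from Example \ref{ex:galois}(3), together with the purity statement of Example \ref{ex:galois}(1), feeds into Iwasawa-theoretic Poitou--Tate duality \`a la Nekov\'a\v{r} to yield equality of $\dL[[\Gal(\cE/E)]]^\circ$-ranks between $\sX(\cE, \dW_\Pi)$ and $\sS(\cE, \dW_\Pi)$. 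With this in hand, Part (1) reduces to (a) $\Leftrightarrow$ (b), which I expect to emerge as a byproduct of the two divisibilities in Part (2).

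For the divisibility $(\sL^0_\cE(\Pi)) \subseteq \Char(\sX(\cE, \dW_\Pi))$, which bounds $\sX$ above by $\sL^0_\cE(\Pi)$ and is the direction announced in \cite{LTX}, I would follow a Kolyvagin-style argument. The main ingredients are: (i) for each $m \geq 1$, an infinite supply of ``$m$-admissible'' primes $\lambda$ of $E$ at which $\Pi$ admits a level-raising congruence modulo $p^m$ to an automorphic representation $\Pi[\lambda]$ on an auxiliary unitary product group that is incoherent at $\lambda$; (ii) for each such $\lambda$, a cohomology class $\kappa(\lambda) \in \rH^1_f(\cE, \dW_\Pi^\circ / p^m)$ constructed via the diagonal cycle on the Shimura variety of the auxiliary group, paralleling the incoherent classes $\kappa(\varphi)$ of Section \ref{ss:selmer}; (iii) a first reciprocity law identifying the singular residue $\partial_\lambda \kappa(\lambda)$ with an appropriate specialization of $\sL^0_\cE(\Pi)$ modulo $p^m$, via a $p$-adic analogue of the refined Gan--Gross--Prasad (Ichino--Ikeda) formula invoked in the proof of Theorem \ref{th:function}; (iv) a second reciprocity law governing pairwise compatibility of $\kappa(\lambda_1)$ and $\kappa(\lambda_2)$ at compatible admissible primes; and (v) a Kolyvagin-style induction exploiting (iii)--(iv) to manufacture enough annihilators of $\sX(\cE, \dW_\Pi) / p^m$ to yield the divisibility in the limit.

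The reverse divisibility $\Char(\sX(\cE, \dW_\Pi)) \subseteq (\sL^0_\cE(\Pi))$ requires an Eisenstein-congruence argument. I would aim to realize $\sL^0_\cE(\Pi)$ as the congruence ideal measuring the failure of cuspidal specializations of a $p$-adic family on $\rU(n+1, n+1)$ to remain distinct from Eisenstein series built from $\Pi_n$, $\Pi_{n+1}$, and anticyclotomic Hecke characters on the Levi of a Siegel parabolic; Galois representations attached to such cuspidal deformations would then produce lower bounds on $\sX(\cE, \dW_\Pi)$. The principal obstacles I foresee are, in decreasing order of difficulty: (A) this Eisenstein-congruence direction, which requires an anticyclotomic variant of the ordinary Eisenstein theory on $\rU(n+1, n+1)$ that has not been developed in this generality and is expected to be the central open problem; (B) the first reciprocity law in higher rank, which demands a refined $p$-adic arithmetic Ichino--Ikeda formula relating the $p$-adic Abel--Jacobi image of diagonal cycles to Bessel periods modulo $p^m$, extending the formalism of Section \ref{ss:selmer} to residual settings; and (C) producing sufficiently many $m$-admissible primes, which depends on a large-image property for the residual representation of $\rho_{\Pi_n} \otimes \rho_{\Pi_{n+1}}$ naturally subsumed by Condition ($\ast$) of Notation \ref{no:galois}.
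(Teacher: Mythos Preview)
The statement you are attempting to prove is labeled a \emph{Conjecture} in the paper, and the paper contains no proof of it; there is therefore nothing to compare your proposal against. What you have written is a research program for attacking an open problem, not a proof that can be checked.

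That said, your outline is broadly consistent with what the paper announces as forthcoming work: the reference \cite{LTX} is cited in the introduction as establishing, in many cases, one divisibility in Conjecture~\ref{co:main1} (and in Conjecture~\ref{co:main2}) by methods generalizing \cite{BD05} and \cite{How04}. Your description of the bipartite Euler system strategy, level-raising at admissible primes, and reciprocity laws is the expected shape of such an argument. However, several of your ingredients are genuinely open in this generality: the Eisenstein-congruence direction you sketch for the reverse divisibility has no analogue in the literature for $\rU(n)\times\rU(n+1)$ with $n>1$, and you correctly flag this as the main obstruction. Likewise, the ``first reciprocity law'' you invoke would require an arithmetic level-raising result and a mod-$p^m$ Ichino--Ikeda-type identity that are not established in the paper and would themselves be substantial theorems.

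In short: there is no gap to identify because there is no proof in the paper; your proposal is a plausible but incomplete strategy for a conjecture that remains open.
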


\begin{remark}
When $\tP=\emptyset$ so that $\cE=E$, part (1) of the above conjecture recovers the rank zero case of the Beilinson--Bloch--Kato conjecture; and part (2) is trivial.
\end{remark}

At the end of this section, we use Nekov\'{a}\v{r}'s $p$-adic height pairing to produce an $\dL[[\Gal(\cE/E)]]^\circ$-sesquilinear (that is, linear in the first variable and adjoint linear in the second variable) pairing
\begin{align}\label{eq:height}
\bh^\dW_\cE\colon\sS(\cE,\dW)\times\sS(\cE,\dW^\vee(1))
\to\Gamma_\tP^E\otimes_{\dZ_p}\dL[[\Gal(\cE/E)]]^\circ
\end{align}
when $\dW$ is pure.

The construction requires a choice of splittings of Hodge filtrations of $\dW$ at every place of $E$ above $\tP$ (see \cite{Nek93}*{Theorem~2.2} for the notion of such splitting). Since $\dW$ is geometric, we may choose a finite extension $E_\sharp/E$ contained in $\ol\dQ$ such that $\dW$ is semistable at every $p$-adic place of $E_\sharp$.\footnote{When $\dW=\dW_\Pi$ in Example \ref{ex:galois}, $\dW$ is already semistable at places above $\tP$ since $\Pi_v$ is semistable for every $v\in\tP$.} For every finite extension $E'/E$ contained in $\cE$, put $E'_\sharp\coloneqq E'\otimes_EE_\sharp$ and denote by $\tP'_\sharp$ the inverse image of $\tP$ with respect to the finite \'{e}tale extension $E'_\sharp/F$.\footnote{We regard $E'_\sharp$ as a finite product of finite extensions $E_\sharp^i/E_\sharp$ contained in $\ol\dQ$. Accordingly, in what follows, we understand $\Gal(\ol\dQ/E'_\sharp)$ as $\prod_i\Gal(\ol\dQ/E_\sharp^i)$, $\rH^1_f(E'_\sharp,-)$ as $\bigoplus_i\rH^1_f(E_\sharp^i,-)$, $\Gamma_\tP^{E'_\sharp}$ as $\bigoplus_i\Gamma_\tP^{E_\sharp^i}$, etc.} Clearly, $\dW$ as an $\dL[\Gal(\ol\dQ/E'_\sharp)]$-module remains pure, and is semistable at every $p$-adic places of $E'_\sharp$; and the chosen splittings of Hodge filtrations induce the ones at every place in $\tP'_\sharp$. We then have the $p$-adic height pairing
\[
\rh^\dW_{E'_\sharp}\colon\rH^1_f(E'_\sharp,\dW)\times\rH^1_f(E'_\sharp,\dW^\vee(1))
\to\Gamma_\tP^{E'_\sharp}\otimes_{\dZ_p}\dL
\]
constructed in \cite{Nek93}*{\S7}. Take $\Gal(\ol\dQ/E)$-stable $\dL^\circ$-lattices $\dW_1$ and $\dW_2$ of $\dW$ and $\dW^\vee(1)$, respectively. Then by the construction of the $p$-adic height pairing, there exists an integer $\delta=\delta(\dW_1,\dW_2)$, depending only on $\dW_1,\dW_2$, such that $\rh^\dW_{E'_\sharp}$ restricts to a pairing
\[
\rh_{E'_\sharp}\colon\rH^1_f(E'_\sharp,\dW_1)\times\rH^1_f(E'_\sharp,\dW_2)\to
\Gamma_\tP^{E'_\sharp}\otimes_{\dZ_p}p^\delta\dL^\circ
\]
for every $E'$. Define
\[
\rh_{E'}\colon\rH^1_f(E',\dW_1)\times\rH^1_f(E',\dW_2)\to\Gamma_\tP^E\otimes_{\dZ_p}p^\delta\dL^\circ
\]
to be the composition of the restriction map from $\rH^1_f(E',-)$ to $\rH^1_f(E'_\sharp,-)$, the pairing $\rh_{E'_\sharp}$, the norm map $\Nm_{E'_\sharp/E}$, and the multiplication by $[E_\sharp:E]^{-1}$ (after possibly changing $\delta$). In particular, $\rh_{E'}$ does not depend on $E_\sharp$. We define
\[
\bh_{E'}\colon\rH^1_f(E',\dW_1)\times\rH^1_f(E',\dW_2)\to\Gamma_\tP^E\otimes_{\dZ_p}p^\delta\dL^\circ[\Gal(E'/E)]
\]
to be the $\dL^\circ[\Gal(E'/E)]$-sesquilinearization of $\rh_{E'}$, namely,
\[
\bh_{E'}(x,y)=\sum_{\varsigma\in\Gal(E'/E)}\rh_{E'}(\varsigma x,y)[\varsigma^{-1}]
\]
for every $(x,y)\in\rH^1_f(E',\dW_1)\times\rH^1_f(E',\dW_2)$. By the lemma below, we obtain the desired pairing \eqref{eq:height} after inverting $p$.

\begin{lem}
The collection of pairings $(\bh_{E'})_{E'}$ is compatible under the corestriction maps on the source and the natural projection maps on the target, hence defines an $\dL^\circ[[\Gal(\cE/E)]]$-sesquilinear pairing
\[
\bh_\cE\coloneqq\varprojlim_{E\subseteq E'\subseteq \cE}\bh_{E'}\colon\sS(\cE,\dW_1)\times\sS(\cE,\dW_2)
\to\Gamma_\tP^E\otimes_{\dZ_p}p^\delta\dL^\circ[[\Gal(\cE/E)]].
\]
\end{lem}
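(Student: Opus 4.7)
The plan is to reduce the lemma to two fundamental properties of Nekov\'{a}\v{r}'s $p$-adic height pairing together with a simple averaging computation for the sesquilinearization.

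\textbf{Step 1.} Recall from \cite{Nek93}*{\S7} the following two features of $\rh^\dW_{L}$ for any finite Galois extension $L/E$ contained in $\cE$ over which $\dW$ is semistable at all $p$-adic places:
\begin{enumerate}
\item[(a)] \emph{Galois equivariance:} $\rh^\dW_L(g x, g y)=g\cdot\rh^\dW_L(x,y)$ for all $g\in\Gal(L/E)$;
\item[(b)] \emph{Corestriction adjointness:} if $E\subseteq L_1\subseteq L_2$, then
$\rh^\dW_{L_1}(\Cor_{L_2/L_1}x,y)=\Nm_{L_2/L_1}\rh^\dW_{L_2}(x,\Res^{L_2}_{L_1}y)$ for $x\in\rH^1_f(L_2,\dW),\,y\in\rH^1_f(L_1,\dW^\vee(1))$.
\end{enumerate}
Here I use $\Nm_{L_2/L_1}\colon\Gamma_\tP^{L_2}\to\Gamma_\tP^{L_1}$. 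Combining (b) with the Mackey identity $\Res^{E'}_{E'_\sharp}\Cor_{E''/E'}=\Cor_{E''_\sharp/E'_\sharp}\Res^{E''}_{E''_\sharp}$ (which holds because $E''/E'$ is abelian and $E''_\sharp=E''\cdot E'_\sharp$) and with the tower identity $\Nm_{E''_\sharp/E}=\Nm_{E'_\sharp/E}\circ\Nm_{E''_\sharp/E'_\sharp}$, I derive the analogous adjunction for the normalized pairings defined in the paper, namely
\[
\rh_{E'}(\Cor_{E''/E'}x,y)=\rh_{E''}(x,\Res^{E''}_{E'}y)
\]
as elements of $\Gamma_\tP^E\otimes_{\dZ_p}p^\delta\dL^\circ$. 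Since $\Gal(\cE/E)$ acts trivially on $\Gamma_\tP^E$, property (a) collapses, after $\Nm_{E'_\sharp/E}$, to the invariance $\rh_{E'}(gx,gy)=\rh_{E'}(x,y)$ for $g\in\Gal(E'/E)$; equivalently, $\rh_{E'}(x,gy)=\rh_{E'}(g^{-1}x,y)$.

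\textbf{Step 2.} Given $x\in\rH^1_f(E'',\dW_1)$ and $y\in\rH^1_f(E'',\dW_2)$, I compute $\bh_{E'}(\Cor x,\Cor y)$. For every $\tau\in\Gal(E'/E)$, choose a lift $\tilde\tau\in\Gal(E''/E)$. Using the standard equivariance $\tau\circ\Cor_{E''/E'}=\Cor_{E''/E'}\circ\tilde\tau$, then the adjunction from Step 1, then $\Res^{E''}_{E'}\Cor_{E''/E'}=\sum_{\delta\in\Gal(E''/E')}\delta$, and finally the collapsed Galois invariance from Step 1, I obtain
\[
\rh_{E'}(\tau\Cor x,\Cor y)=\sum_{\delta\in\Gal(E''/E')}\rh_{E''}(\delta^{-1}\tilde\tau x,y).
\]
Summing against $[\tau^{-1}]$ over $\tau\in\Gal(E'/E)$ and reindexing by $\varsigma\coloneqq\delta^{-1}\tilde\tau\in\Gal(E''/E)$ (a bijection with image under $\pi_{E''/E'}$ equal to $\tau$) yields
\[
\bh_{E'}(\Cor x,\Cor y)=\sum_{\varsigma\in\Gal(E''/E)}\rh_{E''}(\varsigma x,y)[\pi_{E''/E'}(\varsigma)^{-1}]=\pi_{E''/E'}\(\bh_{E''}(x,y)\),
\]
which is exactly the required compatibility between the corestriction on the source and the natural projection $\dL^\circ[\Gal(E''/E)]\to\dL^\circ[\Gal(E'/E)]$ on the target.

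\textbf{Step 3.} With the compatibility in hand, the inverse system $(\bh_{E'})_{E'}$ assembles into a well-defined map
\[
\bh_\cE\colon\sS(\cE,\dW_1)\times\sS(\cE,\dW_2)\to\Gamma_\tP^E\otimes_{\dZ_p}p^\delta\dL^\circ[[\Gal(\cE/E)]];
\]
its $\dL^\circ[[\Gal(\cE/E)]]$-sesquilinearity follows from that of each $\bh_{E'}$ and the fact that sesquilinearity is preserved under the projective limit.

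The main obstacle is the normalization step: verifying that property (b), which is a clean statement for Nekov\'{a}\v{r}'s unnormalized pairing $\rh^\dW_L$, survives the base change to the auxiliary field $E_\sharp$ (chosen so that $\dW$ is semistable) followed by the norm $\Nm_{E'_\sharp/E}$ and the renormalization $[E_\sharp:E]^{-1}$. Once this is checked via Step 1's tower/Mackey bookkeeping, the rest of the proof is the formal averaging computation in Step 2.
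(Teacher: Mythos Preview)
Your proposal is correct and follows essentially the same approach as the paper: both reduce the compatibility to the projection formula (your property (b)) for Nekov\'{a}\v{r}'s height pairing, pushed through the auxiliary extension $E_\sharp$ via the norm-tower and Mackey identities. The paper compresses your Steps 1--2 into the single identity $\sum_{\varsigma\in\Gal(E''/E')}\rh_{E''}(\varsigma x,y)=\rh_{E'}(\Cor^{E''}_{E'}x,\Cor^{E''}_{E'}y)$ and leaves the passage to the sesquilinearization implicit, whereas you make that passage explicit; the content is the same.
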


\begin{proof}
It suffices to show that for $E\subseteq E'\subseteq E''\subseteq \cE$,
\begin{align}\label{eq:height1}
\sum_{\varsigma\in\Gal(E''/E')}\rh_{E''}(\varsigma x,y)=\rh_{E'}\(\Cor^{E''}_{E'} x,\Cor^{E''}_{E'} y\)
\end{align}
holds for every $(x,y)\in\rH^1_f(E'',\dW_1)\times\rH^1_f(E'',\dW_2)$, where $\Res^{E'}_{E''}$ and $\Cor^{E''}_{E'}$ denote the corresponding restriction and corestriction maps, respectively. Put $x'\coloneqq\sum_{\varsigma\in\Gal(E''/E')}\varsigma x$, which is simply the restriction of $\Cor^{E''}_{E'}x\in\rH^1_f(E',\dW_1)$ to $\rH^1_f(E'',\dW_1)$. By definition,
\begin{align*}
\sum_{\varsigma\in\Gal(E''/E')}\rh_{E''}(\varsigma x,y)=\rh_{E''}(x',y)
=\Nm_{E''_\sharp/E}\rh_{E''_\sharp}(x',y)=\Nm_{E'_\sharp/E}\(\Nm_{E''_\sharp/E'_\sharp}\rh_{E''_\sharp}(x',y)\).
\end{align*}
Thus, \eqref{eq:height1} follows as
\[
\Nm_{E''_\sharp/E'_\sharp}\rh_{E''_\sharp}(x',y)
=\Nm_{E''_\sharp/E'_\sharp}\rh_{E''_\sharp}\(\Res^{E'}_{E''}\Cor^{E''}_{E'} x,y\)
=\rh_{E'_\sharp}\(\Cor^{E''}_{E'} x,\Cor^{E''}_{E'} y\)
\]
by the projection formula with respect to a field extension satisfied by local height pairings at every finite place.

The lemma follows.
\end{proof}

\section{Incoherent anticyclotomic $p$-adic $L$-function}
\label{ss:incoherent}

In this section, $\tP$ is again a subset of $\tP(\Pi)$ \eqref{eq:ordinaryp}.

We study the case where $\Pi=\Pi_n\boxtimes\Pi_{n+1}$ is incoherent. Then there exists a hermitian space $V_n$ over $E$ that has signature $(n-1,1)$ at the real place of $F$ induced by the default embedding $E\subseteq\dC$ and $(n,0)$ at other ones, unique up to isomorphism, such that $V_{n,v}$ is the prescribed hermitian space from Section \ref{ss:ggp} for every $v\in\tV_F^\fin$. Put $V_{n+1}\coloneqq V_n\oplus E\cdot\te$. Put
\[
G\coloneqq\rU(V_n)\times\rU(V_{n+1}),\quad
\pi\coloneqq\otimes_{v\in\tV_F^\fin}\pi_v
\]
which is an irreducible admissible representation of $G(\dA_F^\infty)$ with coefficients in $\dL$.

We have a system of Shimura varieties $\{X_K\}_K$ associated with $\Res_{F/\dQ}G$ indexed by neat open compact subgroups $K$ of $G(\dA_F^\infty)$, which are quasi-projective smooth schemes over $E$ of dimension $2n-1$ (see for example \cite{LTXZZ}*{\S3.2} for more details). Put
\[
\rH^i_?(\ol{X},\dL(j))\coloneqq\varinjlim_{K}\rH^i_?(\ol{X}_K,\dL(j))
\]
for $i,j\in\dZ$ and $?\in\{\;,c\}$, where $\ol{X}_K\coloneqq X_K\otimes_E\ol\dQ$.

\begin{lem}\label{le:vanishing}
Let $K$ be a neat open compact subgroup of $G(\dA_F^\infty)$.
\begin{enumerate}
  \item The natural map $\rH^i_c(\ol{X}_K,\dL)[\pi^K]\to\rH^i(\ol{X}_K,\dL)[\pi^K]$ is an isomorphism for every $i\in\dZ$.

  \item The $\dL[K\backslash G(\dA_F^\infty)/K]$-module $\pi^K$ appears in $\rH^{2n-1}_c(\ol{X}_K,\dL)$ semisimply.

  \item If $K=K_\Box K^\Box$ in which $K^\Box$ is hyperspecial maximal, then $\rH^i_c(\ol{X}_K,\dL)[(\pi^\Box)^{K^\Box}]$ vanishes unless $i=2n-1$.
\end{enumerate}
\end{lem}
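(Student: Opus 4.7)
The plan is to work over complex coefficients via the \'etale--Betti comparison isomorphism, and to analyze the automorphic contributions to $\rH^*(\ol{X}_K,\dC)$ using Matsushima's formula together with Arthur's endoscopic classification for unitary groups \cites{Mok15,KMSW}. Broadly, one identifies the cuspidal automorphic representations of $G$ that can contribute to the $\pi^K$-isotypic (resp.\ $(\pi^\Box)^{K^\Box}$-isotypic) component, and then invokes the Vogan--Zuckerman classification to concentrate the relevant $(\f{g},K_\infty)$-cohomology.

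Since each $\Pi_N$ is a relevant $\dL$-representation, its base change to $\GL_N(\dA_E)$ is an isobaric sum of distinct conjugate self-dual cuspidal representations; this defines a \emph{discrete} tempered Arthur parameter for $G$, whose associated global A-packet lies entirely in the cuspidal spectrum of $G$ by Arthur's multiplicity formula \cites{Mok15,KMSW}. This yields (1): the kernel and cokernel of $\rH^i_c(\ol{X}_K,\dL)\to \rH^i(\ol{X}_K,\dL)$ are subquotients of the boundary cohomology of the Borel--Serre compactification, which by Franke's theorem decomposes into Eisenstein contributions parametrized by proper parabolic subgroups, and such contributions cannot contain $\pi^K$ since $\pi$ belongs to the cuspidal spectrum of $G$. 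Combined with Matsushima's formula
\[
\rH^{i}_!(\ol{X}_K,\dL)\otimes_\dL\dC \simeq \bigoplus_{\sigma} m(\sigma)\, \sigma_f^K\otimes\rH^i(\f{g},K_\infty;\sigma_\infty),
\]
where $\sigma$ ranges over cuspidal automorphic representations, this also yields (2): the $\pi^K$-isotypic component is a finite direct sum of copies of $\pi^K$, which is in particular semisimple as a Hecke module.

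For (3), any cuspidal $\sigma$ contributing to the $(\pi^\Box)^{K^\Box}$-isotypic part of compactly supported cohomology must share its unramified local parameters with $\pi$ at every finite $v\notin\Box$; thus the base change of $\sigma$ agrees with $\Pi$ locally outside $\Box$, and strong multiplicity one for isobaric representations of $\GL$ forces the two global base changes to coincide. Consequently $\sigma$ lies in the same discrete tempered A-packet as $\pi$, and in particular $\sigma_\infty$ is tempered at every archimedean place. For a cohomological tempered representation of $G(F_\infty)$ the $(\f{g},K_\infty)$-cohomology with trivial coefficients is concentrated in middle degree: at the default real place, the factor $\UG(n-1,1)\times\UG(n,1)$ is hermitian and the cohomological tempered representations are (anti-)holomorphic discrete series whose cohomology lives in a single bidegree summing to $(n-1)+n=2n-1$; at the remaining real places the factors are compact, and the minimal-weight hypothesis built into Definition \ref{de:relevant} forces the corresponding archimedean component of $\sigma$ to be trivial, contributing only in degree zero.

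The chief technical subtlety lies in the final $(\f{g},K_\infty)$-cohomology computation, which rests on combining the Vogan--Zuckerman classification with Salamanca-Riba's characterization of cohomological tempered representations; the rigidity of the archimedean A-packet associated to $\Pi_\infty^{(\iota)}$ is what pins down both the holomorphicity at the default place and the triviality at the compact places. A secondary point is extending Franke's decomposition of Eisenstein cohomology to $\dL$-coefficients, which follows from the $\dC$-coefficient case since the cuspidality obstruction to $\pi^K$ appearing is Galois-equivariant.
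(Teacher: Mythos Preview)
Your proposal is correct and follows the same overall strategy as the paper: the boundary contribution localized at $\pi$ vanishes because $\pi$ corresponds to a discrete tempered Arthur parameter, and then Matsushima's formula together with archimedean temperedness yields (1)--(3). The paper differs only in two minor technical choices: it first applies the K\"unneth formula to reduce to each factor $\rU(V_N)$ separately (so that the relevant middle degree is simply $N-1$), and it handles the boundary via the smooth toroidal compactification---whose boundary strata for $\rU(N-1,1)$ are governed by the unique proper Levi---rather than invoking Franke's theorem on the Borel--Serre compactification. Your route through Franke is heavier machinery but equally valid; the paper's toroidal approach is more elementary for these particular Shimura varieties, and the K\"unneth reduction streamlines the archimedean $(\fg,K_\infty)$-cohomology bookkeeping that you spell out directly.
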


\begin{proof}
For this lemma, we may fix an embedding $\dL\to\dC$ and regard that $\pi$ is defined over $\dC$.

By possibly shrinking $K$, we may assume that $K$ is of the form $K_n\times K_{n+1}$ in which $K_N$ is a neat open compact subgroup of $G_N(\dA_F^\infty)$ for $N=n,n+1$. By the K\"{u}nneth formula, it suffices to show the following statements for $N=n,n+1$:
\begin{itemize}
  \item[(1')] The natural map $\rH^i_c(\ol{X}_{K_N},\dC)[\pi_N^{K_N}]\to\rH^i(\ol{X}_{K_N},\dC)[\pi_N^{K_N}]$ is an isomorphism for every $i\in\dZ$.

  \item[(2')] The $\dC[K_N\backslash G_N(\dA_F^\infty)/K_N]$-module $\pi_N^{K_N}$ appears in $\rH^{N-1}_c(\ol{X}_{K_N},\dC)$ semisimply.

  \item[(3')] If $K_N=K_{N\Box}K_N^\Box$ in which $K_N^\Box$ is hyperspecial maximal, then $\rH^i_c(\ol{X}_{K_N},\dC)[(\pi_N^\Box)^{K_N^\Box}]$ vanishes unless $i=N-1$.
\end{itemize}
Here $X_{K_N}$ denotes the corresponding Shimura variety for $G_N$ and $\ol{X}_{K_N}\coloneqq X_{K_N}\otimes_E\ol\dQ$. Denote by $X^*_{K_N}$ the unique (smooth) toroidal compactification of $X_{K_N}$ (which is just $X_{K_N}$ when $G_N$ is anisotropic) and put $Z_{K_N}\coloneqq X^*_{K_N}\setminus X_{K_N}$. We claim that $\rH^i(Z_{K_N},\dC)[(\pi_N^\Box)^{K_N^\Box}]$ vanishes for every $i\in\dZ$. Assuming the claim, (1') is immediate; (2') follows from the Matsushima formula; and (3') follows from the Matsushima formula and the fact that every automorphic representation of $G_N(\dA_F)$ that is nearly equivalent to $\pi_N$ is tempered everywhere by the endoscopic classification \cite{KMSW}.

For the claim itself, it has essentially been proved in \cite{LTXZZ}*{Lemma~6.1.11}. Indeed, let $\pi'_N$ be an irreducible admissible representation of $G_N(\dA_F^\infty)$ that appears in $\rH^i(Z_{K_N},\dC)$. Then $\pi'_N$ is the finite part of a representation of $G_N(\dA_F)$ that is an irreducible subquotient of the parabolic induction of a cuspidal automorphic representation of $L_N(\dA_F)$, where $L_N$ is the unique proper Levi subgroup of $G_N$ up to conjugation. Since $\pi_N$ is the finite part of a tempered cuspidal automorphic representation of $G_N(\dA_F)$, $\pi'_N$ is not nearly equivalent to $\pi_N$ hence cannot appear in $\rH^i(Z_{K_N},\dC)[(\pi_N^\Box)^{K_N^\Box}]$. In other words, $\rH^i(Z_{K_N},\dC)[(\pi_N^\Box)^{K_N^\Box}]$ vanishes.

The lemma is proved.
\end{proof}

Put
\[
\dV_\Pi\coloneqq\Hom_{G(\dA_F^\infty)}\(\pi^\vee,\rH^{2n-1}_c(\ol{X},\dL(n))\),\quad
\dV_{\Pi^\vee}\coloneqq\Hom_{G(\dA_F^\infty)}\(\pi,\rH^{2n-1}_c(\ol{X},\dL(n))\).
\]
By the Poincar\'{e} duality and Lemma \ref{le:vanishing}(1,2), the $\dL[\Gal(\ol\dQ/E)]$-modules $\dV_{\Pi^\vee}$ and $(\dV_\Pi)^\vee(1)$ are isomorphic. We fix a $\Gal(\ol\dQ/E)$-equivariant perfect pairing
\[
\llangle\;,\;\rrangle_\Pi\colon\dV_{\Pi^\vee}\times\dV_\Pi\to\dL(1).
\]
By definition, we have maps
\[
\pi\otimes_\dL\dV_{\Pi^\vee}\to\rH^{2n-1}_c(\ol{X},\dL(n)),\quad
\pi^\vee\otimes_\dL\dV_\Pi\to\rH^{2n-1}_c(\ol{X},\dL(n))
\]
of $\dL[G(\dA_F^\infty)]$-modules. Using the Poincar\'{e} duality pairing on $\ol{X}_K$ and $\llangle\;,\;\rrangle_\Pi$, we have induced maps
\[
-_\star\colon\pi^?\to\Hom_{\dL}\(\varprojlim_K\rH^{2n-1}(\ol{X}_K,\dL(n)),\dV_{\Pi^?}\)
\]
for $?\in\{\;,\vee\}$, so that the image of $(\pi^?)^K$ is contained in $\Hom_{\dL}\(\rH^{2n-1}(\ol{X}_K,\dL(n)),\dV_{\Pi^?}\)$.

Denote by $H\subseteq G$ the graph of the natural embedding $\rU(V_n)\hookrightarrow\rU(V_{n+1})$, and fix a rational Haar measure on $H(\dA_F^{\infty,\tP})$. For every $v\in\tP$, fix a standard isomorphism $G_v\simeq\GL_{n,F_v}\times\GL_{n+1,F_v}$ (Definition \ref{de:standard}) and recall the open compact subgroups $I_v^{(r)}$ of $G(F_v)$ for $r>0$ introduced after Definition \ref{de:standard}. Put $I_\tP^{(r)}\coloneqq\prod_{v\in\tP}I_v^{(r)}$.

For every element $\varphi=\otimes_{v\in\tV_F^\fin}\varphi_v\in\pi$ with $\varphi_v\in\pi_v$ that is a (nonzero) ordinary vector for $v\in\tP$, we choose
\begin{itemize}
  \item a neat open compact subgroup $K^\tP$ of $G(\dA_F^{\infty,\tP})$ that fixes $\varphi^\tP$,

  \item a Hecke operator $\tt\in\dL[K^\tP\backslash G(\dA_F^{\infty,\tP})/K^\tP]$ that annihilates $\rH^{2n}(\ol{X}_{K^\tP I_\tP^{(1)}},\dL^\circ(n))$ and such that its action on $\pi^{K^\tP}$ is given by the multiplication by a constant $\omega_\tt\in\dL^\times$, which is possible by Lemma \ref{le:vanishing}(3),

  \item a $\Gal(\ol\dQ/E)$-stable $\dL^\circ$-lattice $\dV_\Pi^\circ$ in $\dV_\Pi$ that contains the image of $\omega_\tt^{-1}\rH^{2n-1}(\ol{X}_{K^\tP I^{(1)}_\tP},\dL^\circ(n))$ under $\varphi_\star$.
\end{itemize}
Note that by Lemma \ref{le:ordinary1}, $\varphi$ is fixed by $I_\tP^{(1)}$.

For every tuple $\fc=(\fc_v)_{v\in\tP}$ of positive integers indexed by $\tP$, we denote by $E_\fc$ the subfield of $\cE_\tP$ such that $\Gal(E_\fc/E)=\Gamma_\tP/\fU_\fc$, so that the union of all $E_\fc$ is $\cE_\tP$. We define a certain element
\[
\kappa(\varphi)'\in\varprojlim_\fc\rH^1_g(E_\fc,\dV_\Pi^\circ)
\]
as follows:\footnote{Here, $\rH^1_g$ is defined similarly as $\rH^1_f$ but replacing $\dB_\cris$ by $\dB_{\dr}$ in the Selmer condition at $p$-adic places.} Put $K^\tP_H\coloneqq K^\tP\cap H(\dA_F^{\infty,\tP})$. Take a tuple $\fc=(\fc_v)_{v\in\tP}$ of positive integers indexed by $\tP$. Put
\begin{align*}
\varphi_\fc&\coloneqq\pi((1_n,\xi)\cdot[\varpi^\fc])\varphi,\\
K_\fc&\coloneqq((1_n,\xi)\cdot[\varpi^\fc])I^{(1)}_\tP((1_n,\xi)\cdot[\varpi^\fc])^{-1},\\
K_{\fc,H}&\coloneqq K_\fc\cap H(F_\tP).
\end{align*}
Then we have the map
\begin{align*}
(\varphi_\fc)_\star\colon\rH^{2n-1}(\ol{X}_{K^\tP K_\fc},\dL(n))\to\dV_\Pi.
\end{align*}
By definition, we have a special homomorphism
\[
Y_{K_H^\tP K_{\fc,H}}\to X_{K^\tP K_\fc}
\]
which is finite and unramified, where $Y_\bullet$ denotes the system of Shimura varieties associated with $H$. By Lemma \ref{le:ordinary3}(1), the structure morphism $Y_{K_H^\tP K_{\fc,H}}\to\Spec E$ factors through $\Spec E_\fc$, which gives rise to a decomposition
\[
Y_{K_H^\tP K_{\fc,H}}\otimes_EE_\fc=\coprod_{\varsigma\in\Gal(E_\fc/E)}\pres{\varsigma}Z_{K_H^\tP K_{\fc,H}}
\]
into disjoint union of open and closed subschemes indexed by $\Gal(E_\fc/E)$ that is compatible with the action of $\Gal(E_\fc/E)$, normalized in the way that $\pres{1}Z_{K_H^\tP K_{\fc,H}}$ is the fiber over the diagonal $\Spec E_\fc\to\Spec(E_\fc\otimes_EE_\fc)$. We have the induced cycle class
\[
[\pres{1}Z_{K_H^\tP K_{\fc,H}}]\in\CH^n(X_{K^\tP K_\fc}\otimes_EE_\fc).
\]
By the choice of $\tt$, $\tt[\pres{1}Z_{K_H^\tP K_{\fc,H}}]$ induces an element
\[
\alpha(\tt[\pres{1}Z_{K_H^\tP K_{\fc,H}}])\in\rH^1_g(E_\fc,\rH^{2n-1}(\ol{X}_{K^\tP K_\fc},\dL^\circ(n)))
\]
via the Abel--Jacobi map (see the proof of \cite{GS23}*{Proposition~9.29} for the reason of being in $\rH^1_g$ in a similar case).

Now put
\[
\kappa(\varphi)'_\fc\coloneqq \prod_{v\in\tP}\(\frac{1}{\omega(\pi_v)}\)^{\fc_v}\cdot\omega_\tt^{-1}\cdot
(\varphi_\fc)_\star\alpha(\tt[\pres{1}Z_{K_H^\tP K_{\fc,H}}])\in\rH^1_g(E_\fc,\dV_\Pi^\circ),
\]
which depends on the choice of $K^\tP$ but not on $\tt$.

\begin{proposition}\label{pr:cycle}
The collection $(\kappa(\varphi)'_\fc)_\fc$ is compatible under corestriction maps hence gives an element $\kappa(\varphi)'\in\varprojlim_\fc\rH^1_g(E_\fc,\dV_\Pi^\circ)$.
\end{proposition}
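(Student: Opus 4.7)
The plan is to verify corestriction compatibility stepwise. For each $v\in\tP$, let $\fc'=(\fc'_u)_{u\in\tP}$ be obtained from $\fc$ by replacing $\fc_v$ with $\fc_v+1$. It suffices to show
\[
\Cor^{E_{\fc'}}_{E_\fc}\kappa(\varphi)'_{\fc'}=\kappa(\varphi)'_\fc
\]
in $\rH^1_g(E_\fc,\dV_\Pi^\circ)$. Since $\dV_\Pi^\circ$ is $\dL^\circ$-torsion free, the identity may be checked after inverting $p$, where the Abel--Jacobi formalism commutes with proper pushforward of cycles.

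First, I would translate corestriction into a geometric operation. Using the Galois-equivariant disjoint-union decomposition of $Y_{K_H^\tP K_{\fc',H}}\otimes_EE_{\fc'}$ indexed by $\Gal(E_{\fc'}/E)$, the cycle $\sum_{\varsigma\in\Gal(E_{\fc'}/E_\fc)}\varsigma\cdot[\pres{1}Z_{K_H^\tP K_{\fc',H}}]$ descends to an algebraic cycle over $E_\fc$; its Abel--Jacobi image represents $\Cor^{E_{\fc'}}_{E_\fc}\alpha([\pres{1}Z_{K_H^\tP K_{\fc',H}}])$. Lemma \ref{le:ordinary3}(1) applied to $\fc$ ensures that the coarser decomposition over $\Gal(E_\fc/E)$ is already defined over $E_\fc$, which is what makes the descent meaningful.

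Second, I would relate this descended cycle, pushed forward to $X_{K^\tP K_{\fc'}}\otimes_E E_\fc$, to $[\pres{1}Z_{K_H^\tP K_{\fc,H}}]$ on $X_{K^\tP K_\fc}\otimes_E E_\fc$ via a Hecke correspondence at $v$. The level groups $K_\fc$ and $K_{\fc'}$ are conjugates of $I_\tP^{(1)}$ by $(1_n,\xi)\cdot[\varpi^\fc]$ and $(1_n,\xi)\cdot[\varpi^{\fc'}]$ respectively; the ratio of these elements is concentrated at $v$ and is (the image of) $[\varpi_v]\in H(F_v)$ in the notation of Notation \ref{no:ordinary1}(2). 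Tracing through the finite \'{e}tale maps $Y_{K_H^\tP K_{\bullet,H}}\to X_{K^\tP K_\bullet}$ and the level translations, the pushforward of the cycle from Step 1 should equal the action of the Hecke correspondence associated with the double coset $K_\fc\cdot[\varpi_v]\cdot K_\fc$ on $[\pres{1}Z_{K_H^\tP K_{\fc,H}}]$. The combinatorial degree matches: by Lemma \ref{le:ordinary3}(2) the relevant index is $q_v^{n(n+1)(2n+1)/6}$.

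Third, I would invoke the ordinary condition to compute the eigenvalue. Under the $G(\dA_F^\infty)$-intertwining $(\varphi_?)_\star$, the geometric Hecke correspondence of Step 2 translates, after the shift by $(1_n,\xi)\cdot[\varpi^\fc]$, into the product operator $\tV_n^{\varpi_v}\otimes\tV_{n+1}^{\varpi_v}$ acting on $\varphi$. By Lemma \ref{le:ordinary} and Notation \ref{no:ordinary}, its eigenvalue on the ordinary vector equals $\omega(\pi_v)$ up to a power of $q_v$ that precisely cancels the degree of the correspondence. Consequently,
\[
\Cor^{E_{\fc'}}_{E_\fc}(\varphi_{\fc'})_\star\alpha\(\tt[\pres{1}Z_{K_H^\tP K_{\fc',H}}]\)=\omega(\pi_v)\cdot(\varphi_\fc)_\star\alpha\(\tt[\pres{1}Z_{K_H^\tP K_{\fc,H}}]\),
\]
and the prefactor $\omega(\pi_v)^{-\fc'_v}=\omega(\pi_v)^{-1}\cdot\omega(\pi_v)^{-\fc_v}$ in the normalization of $\kappa(\varphi)'_{\fc'}$ absorbs the scalar, yielding the desired corestriction identity.

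The main obstacle is the precise identification in Step 2: matching the $H$-side coset decomposition (of cardinality $q_v^{n(n+1)(2n+1)/6}$, as supplied by Lemma \ref{le:ordinary3}(2)) with a $G$-side Hecke correspondence whose effect on the ordinary line one can compute. In the rank-one setting of \cite{BD96} and \cite{How04} this is the classical norm relation for Heegner points; in our higher-rank case it should reduce, via the standard isomorphism $G_v\simeq\GL_{n,F_v}\times\GL_{n+1,F_v}$, to the coset calculation underlying \cite{Jan11}*{Corollary~2.8}, which already powers Proposition \ref{pr:ordinary}.
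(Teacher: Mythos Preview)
Your skeleton matches the paper's proof: reduce to a single increment $\fc\mapsto\fc'$ at one $v\in\tP$, identify corestriction with the Galois trace $\sum_{\varsigma\in\Gal(E_{\fc'}/E_\fc)}\varsigma$ on the diagonal cycle, and then extract the ordinary eigenvalue. But the mechanism in your Steps~2--3 is off in a way that would leave the key identity unproved.

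The paper does \emph{not} route the comparison through a $G$-side Hecke correspondence $K_\fc\,[\varpi_v]\,K_\fc$ with a ``degree versus eigenvalue'' cancellation. It works directly on the representation side: from Definition~\ref{de:ordinary1} one has
\[
\omega(\pi_v)\,\varphi_\fc
=\sum_{u\in U(O_{F_v})/(U(O_{F_v})\cap[\varpi_v]U(O_{F_v})[\varpi_v]^{-1})}
\pi_v\bigl((1_n,\xi)\cdot[\varpi^\fc]\cdot u\cdot[\varpi_v]\bigr)\varphi,
\]
and the eigenvalue here is \emph{exactly} $\omega(\pi_v)$, with no stray $q_v$-power. (Note that $[\varpi_v]=(\varpi_v\,[\varpi_v]_n,\,[\varpi_v]_{n+1})$ in $G(F_v)$, so the operator is the central-character twist of $\tV_n^{\varpi_v}\otimes\tV_{n+1}^{\varpi_v}$; this twist is precisely what turns the $\prod_{m=1}^{n-1}$ in Lemma~\ref{le:ordinary} into the $\prod_{m=1}^{n}$ of Notation~\ref{no:ordinary}.) The crucial input is then \cite{Jan15}*{Lemma~6.5} (not \cite{Jan11}*{Corollary~2.8}), which gives
\[
(1_n,\xi)\cdot[\varpi^\fc]\cdot u\cdot[\varpi_v]\in K_{\fc',H}\cdot(1_n,\xi)\cdot[\varpi^{\fc'}]\cdot I_v^{(1)}
\]
for every $u$. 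Since $I_v^{(1)}$ fixes $\varphi$ and elements of $K_{\fc',H}$ fix the level-$\fc'$ cycle $\kappa^{-1}\,\pres{1}Z_{K_H^\tP K_{\fc,H}}$, each summand contributes the same class; the equality $|U(O_{F_v})/(\cdots)|=\deg\kappa=q_v^{n(n+1)(2n+1)/6}$ from Lemma~\ref{le:ordinary3}(2) is a \emph{bijection} between the $U$-cosets and the sheets of $\kappa$, not a cancellation of scalars. With the identification $\kappa^{-1}\,\pres{1}Z_{K_H^\tP K_{\fc,H}}=\coprod_{\varsigma\in\Gal(E_{\fc'}/E_\fc)}\pres{\varsigma}Z_{K_H^\tP K_{\fc',H}}$ this yields \eqref{eq:cycle} and hence the corestriction relation. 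Replace your Steps~2--3 by this direct argument and the proof is complete.
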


\begin{proof}
It suffices to show the compatibility under the corestriction map $\Cor^{E_{\fc'}}_{E_\fc}$ for $\fc'$ satisfying $\fc'_v=\fc_v+1$ for exactly one element $v\in\tP$ and $\fc'_v=\fc_v$ for others. For $?\in\{\fc,\fc'\}$, denote by $\rH^{2n}(X_{K^\tP K_{\fc'}}\otimes_EE_?,\dL^\circ(n))^0$ the kernel of the restriction map
\[
\rH^{2n}(X_{K^\tP K_{\fc'}}\otimes_EE_?,\dL^\circ(n))\to\rH^{2n}(\ol{X}_{K^\tP K_{\fc'}},\dL^\circ(n)).
\]
Then we have the commutative diagram
\[
\xymatrix{
\rH^{2n}(X_{K^\tP K_{\fc'}}\otimes_EE_{\fc'},\dL^\circ(n))^0 \ar[d]_-{\Tr^{E_{\fc'}}_{E_\fc}}\ar[r]^-{\alpha} & \rH^1(E_{\fc'},\rH^{2n-1}(\ol{X}_{K^\tP K_{\fc'}},\dL^\circ(n))) \ar[d]^-{\Cor^{E_{\fc'}}_{E_\fc}} \\
\rH^{2n}(X_{K^\tP K_{\fc'}}\otimes_EE_\fc,\dL^\circ(n))^0 \ar[r]^-{\alpha} & \rH^1(E_\fc,\rH^{2n-1}(\ol{X}_{K^\tP K_{\fc'}},\dL^\circ(n)))
}
\]
in which $\Tr^{E_{\fc'}}_{E_\fc}$ denotes the trace map along the extension $E_{\fc'}/E_\fc$. Thus, it suffices to show that
\begin{align}\label{eq:cycle}
(\varphi_{\fc'})_\star\alpha(\tt[\coprod_{\varsigma\in\Gal(E_{\fc'}/E_\fc)}\pres{\varsigma}Z_{K_H^\tP K_{\fc',H}}])
=\omega(\pi_v)\cdot(\varphi_\fc)_\star\alpha(\tt[\pres{1}Z_{K_H^\tP K_{\fc,H}}]),
\end{align}
in which $\coprod_{\varsigma\in\Gal(E_{\fc'}/E_\fc)}\pres{\varsigma}Z_{K_H^\tP K_{\fc',H}}$ is regarded as an open and closed subscheme of $Y_{K_H^\tP K_{\fc',H}}\otimes_EE_\fc$. By Definition \ref{de:ordinary1}, we have
\begin{align}\label{eq:cycle1}
\omega(\pi_v)\varphi_\fc
=\sum_{u\in U(O_{F_v})/(U(O_{F_v})\cap [\varpi_v] U(O_{F_v})[\varpi_v]^{-1})}\pi_v((1_n,\xi)\cdot[\varpi^\fc]\cdot u\cdot[\varpi_v])\varphi.
\end{align}
By \cite{Jan15}*{Lemma~6.5}, we have for every $u\in U(O_{F_v})/(U(O_{F_v})\cap [\varpi_v] U(O_{F_v})[\varpi_v]^{-1})$,
\[
(1_n,\xi)\cdot[\varpi^\fc]\cdot u\cdot[\varpi_v]\in K_{\fc',H}\cdot(1_n,\xi)\cdot[\varpi^{\fc'}]\cdot I^{(1)}_v.
\]
By Lemma \ref{le:ordinary3}(2), we have the following commutative diagram
\[
\xymatrix{
Y_{K_H^\tP K_{\fc,H}} \ar[d] & Y_{K_H^\tP K_{\fc',H}} \ar[l]_-{\kappa}\ar[r]\ar[d] & Y_{K_H^\tP K_{\fc',H}} \ar[d] \\
X_{K^\tP K_\fc} & X_{K^\tP(K_\fc\cap K_{\fc'})} \ar[l]\ar[r] & X_{K^\tP K_\fc}
}
\]
in which $\kappa$ is finite of degree $q_v^{\frac{n(n+1)(2n+1)}{6}}$, which is same as the cardinality of
\[
U(O_{F_v})/(U(O_{F_v})\cap [\varpi_v] U(O_{F_v})[\varpi_v]^{-1}).
\]
Since $I^{(1)}_v$ fixes $\varphi$ and translations by elements in $K_{\fc',H}$ fix $\kappa^{-1}\pres{1}Z_{K_H^\tP K_{\fc,H}}$, \eqref{eq:cycle1} implies that
\begin{align*}
\omega(\pi_v)\cdot(\varphi_\fc)_\star\alpha(\tt[\pres{1}Z_{K_H^\tP K_{\fc,H}}])
=(\varphi_{\fc'})_\star\alpha(\tt[\kappa^{-1}\pres{1}Z_{K_H^\tP K_{\fc,H}}]).
\end{align*}
Finally, since
\[
\kappa^{-1}\pres{1}Z_{K_H^\tP K_{\fc,H}}=\coprod_{\varsigma\in\Gal(E_{\fc'}/E_\fc)}\pres{\varsigma}Z_{K_H^\tP K_{\fc',H}},
\]
\eqref{eq:cycle} follow. The proposition is proved.
\end{proof}

\begin{remark}
The cohomology classes $(\kappa(\varphi)'_\fc)_\fc$ and their norm compatibility relation have already been essentially worked out in \cite{Loe21}*{\S5.1.1}, at least when $F=\dQ$.
\end{remark}

In terms of the above proposition, we put
\[
\kappa(\varphi)\coloneqq\vol(K_H^\tP)\cdot\kappa(\varphi)'
\in\(\varprojlim_\fc\rH^1_g(E_\fc,\dV_\Pi^\circ)\)\otimes_{\dL^\circ}\dL,
\]
which depends only on $\varphi$.

\begin{hypothesis}\label{hy:galois}
Every irreducible $\dL[\Gal(\ol\dQ/E)]$-subquotient of $\dV_\Pi$ is isomorphic to a (canonical) direct summand of $\dW_\Pi$ (Example \ref{ex:galois}).
\end{hypothesis}

\begin{remark}
The above hypothesis is known when $n\leq 2$, or when $n>2$ and $F\neq\dQ$ by an ongoing work of Kisin--Shin--Zhu. In fact, there is a more precise version of the above hypothesis specifying such direct summand via Arthur's multiplicity formula; see \cite{LL}*{Hypothesis~6.6}.
\end{remark}

Under Hypothesis \ref{hy:galois}, the natural map
\[
\sS(\cE_\tP,\dV_\Pi)=\(\varprojlim_\fc\rH^1_f(E_\fc,\dV_\Pi^\circ)\)\otimes_{\dL^\circ}\dL\to
\(\varprojlim_\fc\rH^1_g(E_\fc,\dV_\Pi^\circ)\)\otimes_{\dL^\circ}\dL
\]
is an isomorphism. Indeed, it follows from the same statement for $\dW_\Pi$ as $\dW_\Pi$ is pure. Then for every $\tP$-extension $\cE/E$ (Definition \ref{de:extension}),  we denote by $\kappa_\cE(\varphi)$ the image of $\kappa(\varphi)$ under the natural map $\sS(\cE_\tP,\dV_\Pi)\to\sS(\cE,\dV_\Pi)$.

\begin{notation}\label{no:submodule}
Assume Hypothesis \ref{hy:galois} and further that $\dV_\Pi$ is a semisimple $\dL[\Gal(\ol\dQ/E)]$-module so that it is a direct submodule of $\dW_\Pi$. Let $\cE/E$ be a $\tP$-extension.
\begin{enumerate}
  \item Denote by $\sK(\cE,\dW_\Pi)$ the $\dL[[\Gal(\cE/E)]]^\circ$-submodule of $\sS(\cE,\dW_\Pi)$ generated by $\kappa_\cE(\varphi^\vee)^\dag$ (Example \ref{ex:galois}(3)) for all $\varphi^\vee=\otimes_{v\in\tV_F^\fin}\varphi^\vee_v\in\pi^\vee$ with $\varphi^\vee_v\in\pi^\vee_v$ that is an ordinary vector for $v\in\tP$ (with respect to all standard isomorphisms (Definition \ref{de:standard}) at places in $\tP$).

  \item Put $\sT(\cE,\dW_\Pi)\coloneqq\sS(\cE,\dW_\Pi)/\sK(\cE,\dW_\Pi)$.
\end{enumerate}
\end{notation}

The conjecture below can be regarded as a higher-dimensional analogue of Perrin-Riou's Heegner point main conjecture \cite{PR87}.

\begin{conjecture}[Iwasawa's main conjecture in the incoherent case]\label{co:main2}
Suppose that $\Pi$ is incoherent; that Hypothesis \ref{hy:galois} holds; and that $\dV_\Pi$ is a semisimple $\dL[\Gal(\ol\dQ/E)]$-module. Let $\cE/E$ be a $\tP$-extension.
\begin{enumerate}
  \item The following are equivalent:
     \begin{enumerate}
       \item $\sK(\cE,\dW_{\Pi})$ (Notation \ref{no:iwasawa}) is nonzero;

       \item the $\dL[[\Gal(\cE/E)]]^\circ$-module $\sX(\cE,\dW_\Pi)$ is of rank one;

       \item the $\dL[[\Gal(\cE/E)]]^\circ$-module $\sS(\cE,\dW_{\Pi})$ is of rank one.
     \end{enumerate}

  \item When the equivalent conditions in (1) hold, we have
      \[
      \Char\(\sX(\cE,\dW_\Pi)^\tor\)=\Char\(\sT(\cE,\dW_{\Pi})\)^2.
      \]
\end{enumerate}
\end{conjecture}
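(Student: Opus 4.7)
The plan is to adapt Howard's proof \cite{How04} of Perrin-Riou's Heegner point main conjecture to the higher-rank setting of diagonal cycles on unitary Shimura varieties, attacking part (2) via two opposite divisibilities and recovering the equivalences of part (1) as a byproduct.

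First, I would establish one divisibility, say $\Char(\sT(\cE,\dW_\Pi))^2 \mid \Char(\sX(\cE,\dW_\Pi)^\tor)$, by a Kolyvagin-type argument on the cycle classes. Vertical norm compatibility along the $\tP$-tower is already provided by Proposition \ref{pr:cycle}. What remains is horizontal Euler system compatibility at auxiliary Kolyvagin primes, which should be produced by level-raising congruences at suitable inert primes of $F$ together with the local Gan--Gross--Prasad branching relations to propagate the compatibility through the pair $(G,H)$. Given such an anticyclotomic, self-dual Euler system, a Mazur--Rubin--Howard-style Kolyvagin derivative argument will bound the torsion part of $\sX(\cE,\dW_\Pi)$ in terms of the characteristic ideal of $\sT(\cE,\dW_\Pi)$, which yields the divisibility together with the implication $(a) \Rightarrow (b)$ in part (1). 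The equivalence $(b) \Leftrightarrow (c)$ follows from Nekov\'a\v{r}'s global duality applied to the pure geometric representation $\dW_\Pi$ (Example \ref{ex:galois}(1)), since purity forces the ranks of the two Selmer modules to agree.

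Second, I would approach the opposite divisibility by evaluating the Iwasawa $p$-adic height pairing $\bh^{\dW_\Pi}_\cE$ of \eqref{eq:height} on the diagonal cycle classes. The central input should be an ``explicit reciprocity law'' computing $\bh^{\dW_\Pi}_\cE(\kappa_\cE(\varphi),\kappa_\cE(\varphi^\vee)^\dag)$ in terms of the derivative of a cyclotomic $p$-adic $L$-function along the anticyclotomic tower, as anticipated by Conjecture \ref{co:derivative}. Combining this with the cyclotomic main conjecture for the relevant deformation (available in many cases via Eisenstein congruence methods) and with global Selmer duality relating $\sX$ to the quotient $\sS/\sK$ should yield the opposite divisibility, thereby completing the equality in part (2). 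The remaining implication $(c) \Rightarrow (a)$ in part (1) then follows from nondegeneracy of the height pairing on $\sK$ together with the Kolyvagin bound above.

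The main obstacle will be this second divisibility. Establishing the required height formula amounts to a full higher-rank $p$-adic Gross--Zagier formula for diagonal cycles on products of unitary Shimura varieties, which itself requires a careful $p$-adic Hodge-theoretic analysis of the cycle class and a compatibility with the Ichino--Ikeda refinement that underlies Theorem \ref{th:function}. Even the first divisibility is nontrivial: producing horizontal Euler system relations at inert Kolyvagin primes for higher-rank unitary groups is genuinely delicate, which is presumably why only one side of the equality is claimed in the companion paper \cite{LTX}. Additional technical complications include verifying Hypothesis \ref{hy:galois} outside the cases known via the ongoing work of Kisin--Shin--Zhu and controlling endoscopic subtleties in Arthur's multiplicity formula for $\dV_\Pi$, neither of which arises in the classical $n=1$ Heegner point setting.
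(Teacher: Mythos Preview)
The statement you are addressing is a \emph{conjecture}, not a theorem: the paper does not prove it and does not claim to. After stating Conjecture~\ref{co:main2}, the paper only remarks that the $n=1$ analogue (Perrin-Riou's Heegner point main conjecture) is known by \cites{BCK21,Wan21}, with one divisibility already in \cites{How04,Fou13}, and that the forthcoming work \cite{LTX} will establish one divisibility in many higher-rank cases. There is therefore no ``paper's own proof'' to compare your proposal against.

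What you have written is not a proof but a research outline for an open problem, and you seem aware of this: you explicitly flag that the second divisibility requires a full higher-rank $p$-adic Gross--Zagier formula (itself the content of the separate Conjecture~\ref{co:derivative}), that the horizontal Euler system relations are ``genuinely delicate'', and that only one side is claimed in \cite{LTX}. Each of the ingredients you list---the anticyclotomic Euler system via level-raising, the Kolyvagin--Mazur--Rubin--Howard machinery in this self-dual setting, the explicit reciprocity law, and the input from a cyclotomic main conjecture---is either not yet available in the literature for general $n$ or is itself conjectural. So the proposal does not constitute a proof, and the paper does not pretend otherwise; it records Conjecture~\ref{co:main2} precisely because these pieces are not in place.
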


\begin{remark}
The original Perrin-Riou's conjecture (including the one for Shimura curves over $\dQ$, under some conditions) was proved in
\cites{BCK21,Wan21} (with one side of the divisibiilty already proved in \cites{How04,Fou13}).
\end{remark}

Let $\cE/E$ be a $\tP$-extension. In what follows, we define an analog of the $p$-adic $L$-function in the incoherent case, which we denote by $\sL^1_\cE(\Pi)$. Define $\sL^1_\cE(\Pi)=0$ when $\dV_\Pi=0$.

Now we assume $\dV_\Pi\neq 0$. Assume Hypothesis \ref{hy:galois}. We have a pairing
\[
\bh^{\dV_\Pi}_\cE\colon\sS(\cE,\dV_\Pi)\times\sS(\cE,\dV_\Pi)
\to\Gamma_\tP^E\otimes_{\dZ_p}\dL[[\Gal(\cE/E)]]^\circ
\]
from the discussion in the previous section, after we identify $\dV_{\Pi^\vee}$ with $\dV_\Pi^\vee(1)$ using $\llangle\;,\;\rrangle_\Pi$. Note that by Hypothesis \ref{hy:galois} and the fact that $\Pi_v$ is semi-stably ordinary at every $v\in\tP$, $\dV_\Pi$ satisfies the Panchishkin condition hence admits a canonical splitting of the Hodge filtration at every $v\in\tP$, which is the one we use to define the above pairing.

Choose a pair $\varphi=\otimes_{v\in\tV_F^\fin}\varphi_v\in\cV$ and $\varphi^\vee=\otimes_{v\in\tV_F^\fin}\varphi^\vee_v\in\cV^\vee$ with $\varphi_v\in\pi_v$ and $\varphi^\vee_v\in\pi^\vee_v$ satisfying
\begin{itemize}
  \item[(T1)] for $v\in\tP$, both $\varphi_v$ and $\varphi^\vee_v$ are (nonzero) ordinary vectors;

  \item[(T2)] for $v\in\tV_F^\spl\setminus\tP$, the pair $(\varphi_v,\varphi_v^\vee)$ satisfies the conclusion of Proposition \ref{pr:matrix};

  \item[(T3)] for $v\in\tV_F^\fin\setminus\tV_F^\spl$, $\alpha(\varphi_v,\varphi_v^\vee)\neq 0$.
\end{itemize}
By Proposition \ref{pr:matrix} and Lemma \ref{le:matrix2}, such choice is possible. Put
\begin{align*}
\sL^1_\cE(\Pi)&\coloneqq
\prod_{v\in\tP}\frac{\gamma(\Pi_v)}{\langle\varphi^\vee_v,\varphi_v\rangle_{\pi_v}}
\cdot\prod_{v\in\tV_F^\fin\setminus\tV_F^\spl}\frac{\alpha(\varphi_v,\varphi_v^\vee)^{-1}\cdot\Delta_{n+1,v}\cdot L(\tfrac{1}{2},\Pi_{n,v}\times\Pi_{n+1,v})}{L(1,\Pi_{n,v},\As^{(-1)^n})L(1,\Pi_{n+1,v},\As^{(-1)^{n+1}})} \\
&\times \Nm_{E/F}\bh^{\dV_\Pi}_\cE\(\kappa_\cE(\varphi),\kappa_\cE(\varphi^\vee)\)
\in\Gamma_\tP^F\otimes_{\dZ_p}\dL[[\Gal(\cE/E)]]^\circ,
\end{align*}
where $\gamma(\Pi_v)\in\dL^\times$ is the constant in Proposition \ref{pr:ordinary}. Note that by Lemma \ref{le:matrix2}, Lemma \ref{le:ordinary2}, and (T3), the above expression makes sense.

\begin{lem}
The element $\sL^1_\cE(\Pi)$ does not depend on the choice of $(\varphi,\varphi^\vee)$.
\end{lem}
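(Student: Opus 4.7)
The plan is to prove an auxiliary factorization formula
\[
\Nm_{E/F}\bh^{\dV_\Pi}_\cE(\kappa_\cE(\varphi),\kappa_\cE(\varphi^\vee))
= C_\cE\cdot\prod_{v\in\tP}\frac{\alpha_v(\varphi_v,\varphi^\vee_v)}{\langle\varphi^\vee_v,\varphi_v\rangle_{\pi_v}}
\cdot\prod_{v\in\tV_F^\fin\setminus\tP}\alpha_v(\varphi_v,\varphi^\vee_v)
\]
for all admissible $(\varphi,\varphi^\vee)$ satisfying (T1)--(T3), where $C_\cE\in\Gamma_\tP^F\otimes_{\dZ_p}\dL[[\Gal(\cE/E)]]^\circ$ depends only on $\cE$ and $\Pi$. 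Granting this, the lemma follows by direct substitution into the defining expression for $\sL^1_\cE(\Pi)$ and a place-by-place cancellation check: at nonsplit $v$ the explicit factor $\alpha(\varphi_v,\varphi^\vee_v)^{-1}$ cancels the $\alpha_v$ from the factorization; at $v\in\tV_F^\spl\setminus\tP$ condition (T2) combined with Proposition~\ref{pr:matrix} pins down $\alpha_v(\varphi_v,\varphi^\vee_v)$ to a fixed ratio of $L$-factors independent of the particular pair; and at $v\in\tP$ both $\alpha_v$ and $\langle\varphi^\vee_v,\varphi_v\rangle_{\pi_v}$ are bilinear forms on the one-dimensional ordinary line $\otimes$ its dual, so their ratio is a scalar depending only on $\Pi_v$ (nonzero by Lemma~\ref{le:ordinary2} and Proposition~\ref{pr:ordinary}).

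To establish the factorization, note first that both sides are $\dL$-bilinear in $(\varphi,\varphi^\vee)$: for the left-hand side this follows because $\kappa_\cE$ is linear by its construction via the cycle class map and $(\varphi_\fc)_\star$, and because $\bh^{\dV_\Pi}_\cE$ is $\dL$-bilinear (the sesquilinearity over $\dL[[\Gal(\cE/E)]]^\circ$ restricts to honest linearity over $\dL$ since $-^\dag$ fixes scalars). The substantive input is the $H(\dA_F^{\infty,\tP})$-invariance of the left-hand side under the diagonal action on $\pi\otimes\pi^\vee$: translating $\varphi$ by $h\in H(\dA_F^{\infty,\tP})$ corresponds, via the Hecke-equivariance of the map $-_\star$, to applying the Hecke correspondence of $h$ to the diagonal cycle $[\pres{1}Z_{K_H^\tP K_{\fc,H}}]$. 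Because the diagonal cycle is the pushforward of the full $H$-Shimura variety, this Hecke operation permutes it among the Galois components $\pres{\varsigma}Z$; the resulting Galois shift is absorbed by the sesquilinearity of the height pairing and the norm $\Nm_{E/F}$. Combining this invariance with the local multiplicity-one result of Lemma~\ref{le:matrix1} applied place by place (with $\chi$ trivial), and with the rigidity at $v\in\tP$ (where the ordinary line is one-dimensional), yields the claimed factorization, the factor $C_\cE$ being the global proportionality constant.

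The main obstacle is the $H(\dA_F^{\infty,\tP})$-invariance, which is morally the same phenomenon as the norm compatibility established in Proposition~\ref{pr:cycle} but for arbitrary elements of $H(\dA_F^{\infty,\tP})$ rather than the specific Hecke operators $\tV_N^\varpi$ at places of $\tP$. The delicate point is bookkeeping: one has to verify that the Hecke translate of the base component $\pres{1}Z$ under $h$ lies in the Galois orbit of $\pres{1}Z$ in a controlled way, and that the resulting Galois element, coupled with the $\dag$-action on $\dL[[\Gal(\cE/E)]]^\circ$ in the second variable of the height pairing, reproduces the original value after applying $\Nm_{E/F}$. Once this geometric compatibility is in hand, bilinearity plus multiplicity one force the factorization, and the local cancellations described above conclude the proof.
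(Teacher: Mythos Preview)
Your factorization as stated cannot hold, and the reason pinpoints the missing idea. Translating $\varphi$ alone by $h\in H(\dA_F^{\infty,\tP})$ does not leave $\bh^{\dV_\Pi}_\cE(\kappa_\cE(\varphi),\kappa_\cE(\varphi^\vee))$ fixed: it multiplies it by the image $[\varsigma(h)]$ of $\det h$ in $\Gal(\cE/E)\subseteq\dL[[\Gal(\cE/E)]]^\circ$. Sesquilinearity does cancel this under the \emph{diagonal} action (that is your ``absorbed by sesquilinearity'' remark), but diagonal $H$-invariance is far too weak: the space of diagonally $H$-invariant bilinear forms on $\pi\times\pi^\vee$ is infinite-dimensional, and Lemma~\ref{le:matrix1} says nothing about it. Lemma~\ref{le:matrix1} concerns $H\times H$-invariant functionals, i.e.\ invariance in each variable separately, and your functional simply does not lie in $\Hom_{H\times H}(\pi^\tP\boxtimes(\pi^\vee)^\tP,\,\Gamma_\tP^F\otimes_{\dZ_p}\dL[[\Gal(\cE/E)]]^\circ)$ for the trivial action on the target. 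Consequently the right-hand side of your factorization, being a product of untwisted $\alpha_v$'s and hence honestly $H\times H$-invariant, cannot equal the left-hand side.

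The paper repairs this by evaluating at each finite character $\chi$ of $\Gal(\cE/E)$. After evaluation, the Galois shift $[\varsigma(h)]$ becomes the scalar $\chi(\varsigma(h))=\wt\chi(\det h)$, so the functional
\[
(\varphi^\tP,\varphi^{\vee\tP})\longmapsto \bh^{\dV_\Pi}_\cE(\kappa_\cE(\varphi),\kappa_\cE(\varphi^\vee))(\chi)
\]
lands in $\Hom_{H(\dA_F^{\infty,\tP})\times H(\dA_F^{\infty,\tP})}\bigl((\pi_\chi)^\tP\boxtimes(\pi_\chi)^{\vee\tP},\,\Gamma_\tP^F\otimes_{\dZ_p}\dL_\chi\bigr)$. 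Now Lemma~\ref{le:matrix1} applies place by place with the character $\chi_v$ (not the trivial character), yielding proportionality to $\prod_v\alpha^{\chi_v}(\varphi_v,\varphi_v^\vee)$. The local cancellations then go through exactly as you describe, using (T2), (T3), and the one-dimensionality of the ordinary line at $\tP$. Since a measure on $\Gal(\cE/E)$ is determined by its values at finite characters, this suffices. In short, your outline is morally right but you must pass to character values first; invoking multiplicity one ``with $\chi$ trivial'' is precisely the wrong move.
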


\begin{proof}
It suffices to show that for every finite character $\chi\colon\Gal(\cE/E)\to\dL_\chi^\times$, the element
\[
\bh^{\dV_\Pi}_\cE\(\kappa_\cE(\varphi),\kappa_\cE(\varphi^\vee)\)(\chi)\in\Gamma_\tP^F\otimes_{\dZ_p}\dL_\chi
\]
is independent of $(\varphi,\varphi^\vee)$. Without lost of generality, we may assume $\langle\varphi^\vee_v,\varphi_v\rangle_{\pi_v}=1$ for every $v\in\tP$.

The assignment
\[
(\varphi^\tP,\varphi^{\vee\tP})\in\pi^\tP\times\pi^{\vee\tP}\mapsto
\bh^{\dV_\Pi}_\cE\(\kappa_\cE(\varphi),\kappa_\cE(\varphi^\vee)\)(\chi)
\]
defines an element in
\[
\Hom_{H(\dA_F^{\infty,\tP})\times H(\dA_F^{\infty,\tP})}\((\pi_\chi)^\tP\boxtimes(\pi_\chi)^{\vee\tP},\Gamma_\tP^F\otimes_{\dZ_p}\dL_\chi\).
\]
Thus, by Lemma \ref{le:matrix1}, there exists a constant $c\in\Gamma_\tP^F\otimes_{\dZ_p}\dL_\chi$, independent of the pair $(\varphi^\tP,\varphi^{\vee\tP})$, such that
\[
\bh^{\dV_\Pi}_\cE\(\kappa_\cE(\varphi),\kappa_\cE(\varphi^\vee)\)(\chi)=c\prod_{v\in\tV_F^\infty\setminus\tP}
\(\frac{\Delta_{n+1,v}L(\tfrac{1}{2},\Pi_{n,v}\times\Pi_{n+1,v})}{L(1,\Pi_{n,v},\As^{(-1)^n})L(1,\Pi_{n+1,v},\As^{(-1)^{n+1}})}\)^{-1}
\alpha(\varphi^\tP_v,\varphi^{\vee\tP}_v).
\]
The lemma then follows.
\end{proof}

\begin{remark}
It is easy to see that $\sL^1_\cE(\Pi)$ neither depends on the choice of the standard isomorphisms $G_v\simeq\GL_{n,F_v}\times\GL_{n+1,F_v}$ for $v\in\tP$. However, $\sL^1_\cE(\Pi)$ does depend on the choices of $\llangle\;,\;\rrangle_\Pi$ and a rational Haar measure on $H(\dA_F^{\infty,\tP})$.\footnote{Choosing $\llangle\;,\;\rrangle_\Pi$ is equivalent to choosing an $\dL$-valued Haar measure on $G(\dA_F^\infty)$.}
\end{remark}

The conjecture and the remark below are analogs of Conjecture \ref{co:vanishing1} and Remark \ref{re:vanishing1}, respectively.

\begin{conjecture}\label{co:vanishing2}
Assume Hypothesis \ref{hy:galois}. Suppose that $\dV_\Pi\neq 0$ and condition ($*$) in Notation \ref{no:galois} holds. Then $\sL^1_\cE(\Pi)\neq 0$ for every $\tP$-extension $\cE/E$ that contains $\cE_{\tP'}$ for some nonempty subset $\tP'$ of $\tP$.
\end{conjecture}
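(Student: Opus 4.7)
The plan is to reduce the assertion to a single finite-order specialization and then appeal to an equidistribution argument for diagonal cycles. Since $\sL^1_\cE(\Pi)\in\Gamma_\tP^F\otimes_{\dZ_p}\dL[[\Gal(\cE/E)]]^\circ$, nonvanishing is equivalent to exhibiting one finite-order character $\chi\colon\Gal(\cE/E)\to\ol\dL^\times$ such that $\sL^1_\cE(\Pi)(\chi)\neq 0$. From the defining formula, together with the fact that the local constants $\gamma(\Pi_v)$, $\langle\varphi^\vee_v,\varphi_v\rangle_{\pi_v}^{-1}$, $\alpha(\varphi_v,\varphi_v^\vee)^{-1}$, and ratios of $L$-factors are all nonzero (by Propositions \ref{pr:matrix}, \ref{pr:ordinary}, Lemma \ref{le:ordinary2}, and (T3)), this reduces to showing
\[
\Nm_{E/F}\bh^{\dV_\Pi}_\cE\(\kappa_\cE(\varphi),\kappa_\cE(\varphi^\vee)\)(\chi)\neq 0
\quad\text{in}\quad \Gamma_\tP^F\otimes_{\dZ_p}\dL_\chi
\]
for some such $\chi$. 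I would take $\chi$ of finite order, ramified at every place of some nonempty $\tP'\subseteq\tP$ with $\cE_{\tP'}\subseteq\cE$, and with conductor $\prod_{v\in\tP}\fp_v^{\fc_v}$ for a tuple $\fc$ to be chosen large.

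Next, I would unwind the construction of $\kappa_\cE(\varphi)$ at $\chi$. By Proposition \ref{pr:cycle}, the image of $\kappa_\cE(\varphi)$ in $\rH^1_f(E_\fc,\dV_\Pi)$ agrees up to the unit $\prod_{v}\omega(\pi_v)^{-\fc_v}\cdot\vol(K_H^\tP)\cdot\omega_\tt^{-1}$ with $(\varphi_\fc)_\star\alpha(\tt[\pres{1}Z_{K_H^\tP K_{\fc,H}}])$; its $\chi$-component is the associated $\chi$-weighted sum of Galois translates. The problem then splits into two independent assertions: \emph{(i)} the $\chi$-twisted cycle class is nonzero in the $\pi$-isotypic part of $\rH^1_f(E_\fc,\dV_\Pi)$, and \emph{(ii)} its $p$-adic height pairing with the analogous class for $\pi^\vee$ is nonzero.

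For (i), the natural approach is a higher-rank analog of the Cornut--Vatsal equidistribution theorem: as $\fc$ grows, the Galois-translated diagonal cycles $\pres{\varsigma}Z_{K_H^\tP K_{\fc,H}}$ for $\varsigma\in\Gal(E_\fc/E)$ should equidistribute in $\ol X_{K^\tP K_\fc}$ in a sense strong enough that, combined with the large Galois image guaranteed by condition ($*$) (via Chebotarev applied to $\dV_\Pi$), at least one $\chi$-twist in each sufficiently large conductor range produces a nonzero class in the $\pi$-isotypic Selmer quotient. In the classical case $n=1$ this is the content of \cite{CV07}; for higher $n$ the substantive new input is an analog of Ratner-type rigidity for the subgroup $H(\dA_F)$ acting on $G(\dA_F)$, which is no longer of real rank one.

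The main obstacle, however, is (ii): showing nondegeneracy of the $p$-adic height pairing on the classes produced above. The Panchishkin condition furnished by semi-stable ordinarity at $\tP$ makes $\bh^{\dV_\Pi}_\cE$ well-defined, but ruling out accidental vanishing at the chosen $\chi$ is delicate. A reasonable line of attack is to vary $\chi$ in a Zariski-dense family of finite-order characters: if $\bh^{\dV_\Pi}_\cE(\kappa_\cE(\varphi),\kappa_\cE(\varphi^\vee))$ vanished identically on such a set, then the full Iwasawa height would vanish, contradicting the expected interpolation property of Conjecture \ref{co:derivative}, which reflects nontriviality of central derivatives of cyclotomic Rankin--Selberg $L$-functions. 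Turning this sketch into an unconditional proof appears to be the hardest step and will presumably require a full higher-rank Gross--Zagier type formula for the diagonal cycle, refined in the $p$-adic direction, in the spirit of \cite{Zha14}.
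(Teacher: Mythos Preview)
The statement you are attempting to prove is a \emph{conjecture} in the paper, not a theorem. The paper does not give a proof; it only remarks that when $n=1$ the assertion can be deduced from \cite{CV07}, and leaves the general case open. So there is no ``paper's own proof'' to compare against.

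Your proposal is accordingly not a proof but a heuristic outline, and you correctly identify the two essential obstructions: a higher-rank Cornut--Vatsal type equidistribution result for diagonal cycles on products of unitary Shimura varieties, and nondegeneracy of the $p$-adic height pairing on the resulting classes. Neither ingredient is currently available for $n\geq 2$. Your reduction in step (ii) to Conjecture \ref{co:derivative} is circular as stated, since that conjecture is also open and in any case would still require nonvanishing of the relevant central derivatives. The honest status is that your plan is a reasonable blueprint for what a proof might eventually look like, but at present both (i) and (ii) are genuine open problems, which is precisely why the paper records the statement as a conjecture.
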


When $n=1$, the conjecture can be deduced from \cite{CV07}.

\begin{remark}
Take an arbitrary embedding $\iota\colon\dL\to\dC$. Then $\epsilon(\frac{1}{2},\Pi_n^{(\iota)}\times\Pi_{n+1}^{(\iota)})$ equals $\epsilon(\Pi)$, which we have assumed to be $-1$. Now $\epsilon(\frac{1}{2},\Pi_n^{(\iota)}\times\Pi_{n+1}^{(\iota)})$ decomposes as the product of $d(\Pi_n)d(\Pi_{n+1})$ root numbers (valued in $\{\pm 1\}$) for their isobaric factors. Then by a similar discussion in \cite{GGP12}*{\S26} using \cite{LL}*{Lemma~3.15}, $\dV_\Pi\neq 0$ if and only if those root numbers contain $-1$ exactly once.
\end{remark}

We propose the following conjecture, which is a variant of Conjecture \ref{co:main2} without using the conditionally defined module $\sK(\cE,\dW_\Pi)$.

\begin{conjecture}\label{co:main3}
Suppose that $\Pi$ is incoherent and that Hypothesis \ref{hy:galois} holds. Let $\cE/E$ be a $\tP$-extension.
\begin{enumerate}
  \item The following are equivalent:
     \begin{enumerate}
       \item $\sL^1_\cE(\Pi)$ is nonzero;

       \item the $\dL[[\Gal(\cE/E)]]^\circ$-module $\sX(\cE,\dW_\Pi)$ is of rank one;

       \item the $\dL[[\Gal(\cE/E)]]^\circ$-module $\sS(\cE,\dW_{\Pi})$ is of rank one.
     \end{enumerate}

  \item When the equivalent conditions in (1) hold, $\Char\(\sX(\cE,\dW_\Pi)^\tor\)$ is generated by the image of $\phi\sL^1_\cE(\Pi)$ in $\dL[[\Gal(\cE/E)]]^\circ$ for all $\dZ_p$-linear maps $\phi\colon\Gamma_\tP^F\to\dZ_p$.\footnote{If one assumes Leopoldt's conjecture, then the $\dZ_p$-rank of $\Gamma_\tP^F$ is one; hence it suffices to consider one such $\phi$ that is surjective.}
\end{enumerate}
\end{conjecture}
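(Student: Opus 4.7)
The plan is to reduce Conjecture \ref{co:main3} to Conjecture \ref{co:main2} by using the $p$-adic height pairing $\bh^{\dV_\Pi}_\cE$ to translate statements about the submodule $\sK(\cE,\dW_\Pi)$ generated by diagonal cycle classes into statements about the element $\sL^1_\cE(\Pi)$, which is essentially a normalized pairing $\bh^{\dV_\Pi}_\cE\(\kappa_\cE(\varphi),\kappa_\cE(\varphi^\vee)\)$. The first step is to establish the equivalence $(b)\Leftrightarrow(c)$ in (1). This should follow from the Poitou--Tate nine-term exact sequence applied to the pure Galois representation $\dW_\Pi$, combined with its essential self-duality via Example \ref{ex:galois}(3) (namely $\dW_\Pi^\vee(1)\simeq\dW_{\Pi^\vee}$ together with the conjugation identification $\sS(\cE,\dW_{\Pi^\vee})\simeq\sS(\cE,\dW_\Pi)^\dag$). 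This is a standard Iwasawa-theoretic input that does not use anything about the cycle or the pairing.

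The second step is the implication $(a)\Rightarrow(c)$: if $\sL^1_\cE(\Pi)\neq 0$, then by its very definition there exists a choice of test vectors with $\bh^{\dV_\Pi}_\cE\(\kappa_\cE(\varphi),\kappa_\cE(\varphi^\vee)\)\neq 0$, so the class $\kappa_\cE(\varphi^\vee)^\dag$ is nonzero in $\sS(\cE,\dW_\Pi)$. Hence $\sK(\cE,\dW_\Pi)\neq 0$, and I would invoke Conjecture \ref{co:main2}(1) to conclude that $\sS(\cE,\dW_\Pi)$ has rank one. For the converse $(c)\Rightarrow(a)$, the idea is that under the rank-one hypothesis, Conjecture \ref{co:main2} predicts that $\sK(\cE,\dW_\Pi)$ accounts for the full rank-one generic part of $\sS(\cE,\dW_\Pi)$; one must then argue that the height pairing is nonzero on this generic part. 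This should be approached through the conjectural interpolation formula (Conjecture \ref{co:derivative}) relating $\sL^1_\cE(\Pi)$ to the cyclotomic derivative of a coherent $p$-adic $L$-function attached to a rank-one deformation passing through $\Pi$, combined with the nonvanishing statement Conjecture \ref{co:vanishing2}.

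For part (2), the strategy is a Nekov\'{a}\v{r}-type argument: after taking a $\dZ_p$-linear projection $\phi\colon\Gamma_\tP^F\to\dZ_p$, the Poitou--Tate global duality in the derived category yields a Bockstein-type exact sequence in which the torsion of $\sX(\cE,\dW_\Pi)$ is controlled by the $\phi$-component of the height pairing on the generic rank-one subspace. Under Conjecture \ref{co:main2}(2), the characteristic ideal of $\sT(\cE,\dW_\Pi)$ is squared because the Iwasawa Heegner-type class $\kappa$ enters through two factors; when one takes the height pairing instead of the norm pairing of the compact Selmer module, the two factors combine into a single Nekov\'{a}\v{r} regulator, and the prefactor normalizations in the definition of $\sL^1_\cE(\Pi)$ are designed (via Proposition \ref{pr:matrix} and Proposition \ref{pr:ordinary}) so that this regulator is exactly $\phi\sL^1_\cE(\Pi)$ up to a unit. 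One does not pick up a square, matching the statement of Conjecture \ref{co:main3}(2) as opposed to Conjecture \ref{co:main2}(2).

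The main obstacle will be the implication $(c)\Rightarrow(a)$ and the exact (rather than one-sided) identification of characteristic ideals in (2). Both require genuine inputs beyond the Iwasawa-theoretic formalism: the nondegeneracy of the Nekov\'{a}\v{r} $p$-adic height on the cycle classes $\kappa_\cE(\varphi)$, which in the classical $\GL_1\times\GL_2$ setting is supplied by the Bertolini--Darmon--Prasanna formula and the Gross--Zagier formula, has no analogue currently available in this higher-rank setting. The subsequent work \cite{LTX} addresses only the divisibility in Conjecture \ref{co:main2}, and the corresponding divisibility in Conjecture \ref{co:main3}(2) can be deduced from it together with a general height-pairing formalism; the reverse divisibility, as well as the nonvanishing of $\sL^1_\cE(\Pi)$ at rank one, seem to require an arithmetic inner product formula for the diagonal cycle --- a generalization of Yuan--Zhang--Zhang to unitary Gan--Gross--Prasad that is not yet in the literature.
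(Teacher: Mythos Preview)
The statement you are attempting to prove is labeled \emph{Conjecture} in the paper, and the paper does not contain any proof of it. It is introduced with the sentence ``We propose the following conjecture, which is a variant of Conjecture \ref{co:main2} without using the conditionally defined module $\sK(\cE,\dW_\Pi)$.'' There is therefore no ``paper's own proof'' to compare your attempt against; the paper simply states the conjecture and moves on.

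Your proposal is not a proof but a heuristic reduction to other open conjectures: you invoke Conjecture \ref{co:main2} (itself open), Conjecture \ref{co:vanishing2}, and Conjecture \ref{co:derivative}, none of which is established in the paper. You correctly identify at the end that the hardest inputs---nondegeneracy of the height pairing on the diagonal-cycle classes and the exact divisibility---are genuinely unavailable. That diagnosis is right, but it means what you have written is a plausibility sketch for why one might expect Conjecture \ref{co:main3} to be equivalent to (or a consequence of) the web of conjectures around it, not an argument that proves it. In particular, even granting Conjecture \ref{co:main2} in full, your passage from $\Char(\sT(\cE,\dW_\Pi))^2$ to the ideal generated by the $\phi\sL^1_\cE(\Pi)$ would still require knowing that the Nekov\'a\v{r} regulator on $\sK(\cE,\dW_\Pi)$ is nondegenerate and computes the characteristic ideal exactly, which is an independent (and open) statement.
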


At last, we propose a conjecture relating $\sL^1_{\cE_\tP}(\Pi)$ to the derivative of the hypothetical (full) $\tP$-adic $L$-function of $\Pi$ along the anticyclotomic direction.

\begin{hypothesis}\label{hy:function}
In this hypothesis, we temporarily allow $\Pi$ to be either coherent or incoherent. There exists a unique measure $\sL^E_\tP(\Pi)$ on $\Gamma_\tP^E$ valued in $\dL$, such that for every finite character $\Xi\colon\Gamma_\tP^E\to\dL_\Xi^\times$ of conductor $\prod_{u\mid\tP}\fp_u^{\fc_u}$ for a tuple $\fc=(\fc_u)_{u\mid\tP}$ of positive integers indexed by places of $E$ above $\tP$ and every embedding $\iota\colon\dL_\Xi\to\dC$, we have
\[
\iota\sL^E_\tP(\Pi)(\Xi)=G(\iota\Xi)^{\frac{n(n+1)}{2}}
\prod_{u\mid\tP}\(\frac{q_u^{\frac{(n-1)n(n+1)}{6}}}{\iota\omega(\Pi_u)}\)^{\fc_u}
\frac{\Delta_{n+1}\cdot L(\tfrac{1}{2},\iota(\Pi_n\otimes\wt\chi)\times\iota\Pi_{n+1})}
{2^{d(\Pi_n)+d(\Pi_{n+1})}\cdot L(1,\iota\Pi_n,\As^{(-1)^n})L(1,\iota\Pi_{n+1},\As^{(-1)^{n+1}})},
\]
where $G(\iota\Xi)$ is the global Gauss sum defined on \cite{Jan16}*{Page~460} (which depends only on $\iota\Xi$).
\end{hypothesis}

\begin{remark}
When $d(\Pi_n)=d(\Pi_{n+1})=1$, we know
\begin{itemize}
  \item for every fixed embedding $\iota_0\colon\dL\to\dC$, the existence of the measure satisfying the interpolation property in the above hypothesis up to a constant in $\dC^\times$ for $\iota$ extending $\iota_0$ (this is due to a series of works (mainly) \cites{KMS00,Jan11,Jan15,Jan16});

  \item the existence of the measure after restriction to $\Gamma_\tP^F$ satisfying the interpolation property in the above hypothesis, under the condition that for every $v\in\tV_F^{(p)}\setminus\tP(\Pi)$, $v$ is nonsplit in $E$ and $\Pi_v$ is unramified \cite{DZ}*{Theorem~B}.
\end{itemize}
\end{remark}

Now when $\Pi$ is incoherent, $\sL^E_\tP(\Pi)$ vanishes along the homomorphism $\Nm_{E/F}^-\colon\Gamma_\tP^E\to\Gamma_\tP$. Since the conormal bundle of the rigid analytic spectrum of $\dL[[\Gamma_\tP]]^\circ$ in that of $\dL[[\Gamma_\tP^E]]^\circ$ is canonically the constant bundle $\Gamma_\tP^F\otimes_{\dZ_p}\dL$, we obtain an element
\[
\rd\sL^E_\tP(\Pi)\in\Gamma_\tP^F\otimes_{\dZ_p}\dL[[\Gamma_\tP]]^\circ.
\]

\begin{conjecture}\label{co:derivative}
Assume Hypothesis \ref{hy:galois} and Hypothesis \ref{hy:function}. Then
\[
\sL^1_{\cE_\tP}(\Pi)=C\cdot\rd\sL^E_\tP(\Pi),
\]
where $C$ is an explicit constant in $\dL^\times$ depending on the choices of $\llangle\;,\;\rrangle_\Pi$ and a rational Haar measure on $H(\dA_F^{\infty,\tP})$.
\end{conjecture}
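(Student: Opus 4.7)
The plan is to verify the asserted identity after pairing with an arbitrary $\dZ_p$-linear functional $\phi\colon\Gamma_\tP^F\to\dZ_p$ and then evaluating at every finite character $\chi\colon\Gamma_\tP\to\dL_\chi^\times$ ramified at every place above $\tP$, composed with every embedding $\iota\colon\dL_\chi\to\dC$. Since both sides are bounded measures on $\Gamma_\tP$ valued in the finite dimensional $\dL$-vector space $\Gamma_\tP^F\otimes_{\dZ_p}\dL$, these specializations form a determining set. It thus suffices to match $\phi(\sL^1_{\cE_\tP}(\Pi))(\chi)$ with $C\cdot\phi(\rd\sL^E_\tP(\Pi))(\chi)$ for all such $(\phi,\chi)$, with a single constant $C\in\dL^\times$ independent of $(\phi,\chi)$ but depending on $\llangle\;,\;\rrangle_\Pi$ and the chosen Haar measure on $H(\dA_F^{\infty,\tP})$.

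For the right-hand side, I would choose a continuous lift $\widehat\phi\colon\Gamma_\tP^F\to(1+p\dL^\circ)^\times$ of $\phi$, view it as a one-parameter family of cyclotomic twists, and write
\[
\phi\bigl(\rd\sL^E_\tP(\Pi)\bigr)(\chi)=\left.\tfrac{\rd}{\rd t}\right|_{t=0}\sL^E_\tP(\Pi)\bigl(\wt\chi\cdot\widehat\phi^{\,t}\bigr).
\]
The interpolation formula of Hypothesis \ref{hy:function}, extended to continuous $p$-adic characters by analytic interpolation in $t$, then identifies this derivative --- after factoring out the Gauss sum $G(\iota\wt\chi)^{n(n+1)/2}$, the ordinary Euler factors $\prod_{u\mid\tP}(q_u^{(n-1)n(n+1)/6}/\iota\omega(\Pi_u))^{\fc_u}$, and the Asai $L$-values --- with the cyclotomic-direction derivative at $t=0$ of the complex Rankin--Selberg $L$-function $L(\tfrac{1}{2},\iota(\Pi_n\otimes\wt\chi\cdot\widehat\phi^{\,t})\times\iota\Pi_{n+1})$; the central value itself vanishes by incoherence.

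For the left-hand side, the natural tool is Nekov\'{a}\v{r}'s spectral decomposition $\bh^{\dV_\Pi}_{\cE_\tP}=\sum_{u}\bh_u$ applied to $\bh^{\dV_\Pi}_{\cE_\tP}(\kappa_{\cE_\tP}(\varphi),\kappa_{\cE_\tP}(\varphi^\vee))(\chi)$, with the splittings of the Hodge filtration at places above $\tP$ dictated by the ordinary condition satisfied by $\dV_\Pi$. At a finite place $u$ not above $\tP$, the local height should be expressible as an intersection multiplicity of horizontal extensions of the diagonal cycles $\pres{1}Z_{K^\tP_H K_{\fc,H}}$ on integral models of the unitary Shimura varieties associated with $G$; in the spirit of Gan--Gross--Prasad / Ichino--Ikeda these intersections should equal (up to rational constants) the local Rankin--Selberg integrals of the test vectors $(\varphi_v,\varphi_v^\vee)$ normalized as in (T1)--(T3), so that the product over non-$\tP$ places collapses to precisely the analytic local factors built into the definition of $\sL^1_{\cE_\tP}(\Pi)$. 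At a place $u$ above $\tP$, a Coleman-integral computation of the local $p$-adic height along the ordinary line, combined with the local Birch lemma \cite{Jan11}*{Corollary~2.8} already used in the proof of Proposition \ref{pr:ordinary}, should reproduce the Gauss sum and the ratio $(q_v^{n(n+1)(2n+1)/6}/\omega(\Pi_v))^{\fc_v}$ that separate the two interpolation formulas.

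The main obstacle is the global matching step, which is essentially a higher-rank $p$-adic arithmetic Gan--Gross--Prasad identity relating $p$-adic Abel--Jacobi images of diagonal cycles on unitary Shimura varieties to first derivatives of the cyclotomic Rankin--Selberg $p$-adic $L$-function. One natural attack is a relative trace formula in $p$-adic analytic families, matching geometric distributions on the unitary side against orbital integrals on the $\GL_n\times\GL_{n+1}$ side and specializing in the anticyclotomic direction. A complementary route, modeled on Howard's argument in \cite{How04} for the Heegner case, would bootstrap from Theorem \ref{th:function} via congruences between coherent and incoherent automorphic representations, reducing the derivative identity to the already-established zero-th order interpolation. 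Either strategy will require tracking the constant $C$ precisely, and it is likely that some version of Hypothesis \ref{hy:galois} will have to be strengthened to pin down a canonical realization of $\dV_\Pi$ inside the cohomology.
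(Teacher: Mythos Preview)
The statement you are attempting to prove is a \emph{conjecture}; the paper does not provide a proof. After stating Conjecture~\ref{co:derivative}, the paper only records the known cases: Disegni's analog for $n=1$ over quaternionic Shimura curves \cite{Dis17}*{Theorem~C.4}, Howard's earlier special case for modular curves \cite{How05}, and the result of Disegni--Zhang \cite{DZ}*{Theorem~D} for general $n$ after evaluating at everywhere unramified characters of $\Gamma_\tP$ under several restrictive conditions. There is no ``paper's own proof'' to compare your proposal against.

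That said, your proposal is not a proof either, and you seem aware of this. The reduction in your first paragraph (test against $\phi$ and $\chi$) and the unpacking of $\phi(\rd\sL^E_\tP(\Pi))(\chi)$ via Hypothesis~\ref{hy:function} are fine and essentially formal. The decisive content lies entirely in what you call ``the main obstacle'': identifying the local $p$-adic heights of the diagonal cycles with the analytic local factors, place by place. At places away from $\tP$ this is a genuine $p$-adic arithmetic Gan--Gross--Prasad statement, not a consequence of the Ichino--Ikeda formula you invoke; the latter concerns automorphic periods, not intersection numbers on integral models. At places above $\tP$, the local Birch lemma \cite{Jan11}*{Corollary~2.8} computes a Whittaker integral, not a Coleman integral of Abel--Jacobi classes, so the passage you sketch there is also a substantial open step. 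Your two suggested attacks (a relative trace formula in families, or a congruence argument \`a la Howard) are reasonable research directions, and indeed the partial result \cite{DZ} proceeds via the first of these, but neither is available in the generality of the conjecture. In short: your outline correctly locates where the difficulty lies, but the gap you name is the entire conjecture.
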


Disegni proved an analog \cite{Dis17}*{Theorem~C.4} of the above conjecture when $n=1$ in terms of quaternionic Shimura curves; while the special case for modular curves under the classical Heegner condition was already known by Howard \cite{How05}. For general $n$, the above conjecture is known \cite{DZ}*{Theorem~D} after evaluating at everywhere unramified characters of $\Gamma_\tP$ under several conditions (for instance, $d(\Pi_n)=d(\Pi_{n+1})=1$, $E/F$ everywhere unramified, $\tP=\tP(\Pi)=\tV_F^{(p)}$, $\Pi_v$ is unramified for every $v\in\tV_F^{(p)}$, etc).

\appendix

\begin{bibdiv}
\begin{biblist}

\bib{Ber95}{article}{
   author={Bertolini, Massimo},
   title={Selmer groups and Heegner points in anticyclotomic $\bZ_p$-extensions},
   journal={Compositio Math.},
   volume={99},
   date={1995},
   number={2},
   pages={153--182},
   issn={0010-437X},
   review={\MR{1351834}},
}

\bib{BD96}{article}{
   author={Bertolini, M.},
   author={Darmon, H.},
   title={Heegner points on Mumford-Tate curves},
   journal={Invent. Math.},
   volume={126},
   date={1996},
   number={3},
   pages={413--456},
   issn={0020-9910},
   review={\MR{1419003}},
   doi={10.1007/s002220050105},
}

\bib{BD05}{article}{
   author={Bertolini, M.},
   author={Darmon, H.},
   title={Iwasawa's main conjecture for elliptic curves over anticyclotomic $\mathbb{Z}_p$-extensions},
   journal={Ann. of Math. (2)},
   volume={162},
   date={2005},
   number={1},
   pages={1--64},
   issn={0003-486X},
   review={\MR{2178960}},
   doi={10.4007/annals.2005.162.1},
}

\bib{BP15}{article}{
   author={Beuzart-Plessis, R.},
   title={Endoscopie et conjecture locale raffin\'{e}e de Gan-Gross-Prasad pour
   les groupes unitaires},
   language={French, with English summary},
   journal={Compos. Math.},
   volume={151},
   date={2015},
   number={7},
   pages={1309--1371},
   issn={0010-437X},
   review={\MR{3371496}},
   doi={10.1112/S0010437X14007891},
}

\bib{BP16}{article}{
   author={Beuzart-Plessis, Rapha\"{e}l},
   title={La conjecture locale de Gross-Prasad pour les repr\'{e}sentations
   temp\'{e}r\'{e}es des groupes unitaires},
   language={French, with English and French summaries},
   journal={M\'{e}m. Soc. Math. Fr. (N.S.)},
   date={2016},
   number={149},
   pages={vii+191},
   issn={0249-633X},
   isbn={978-2-85629-841-1},
   review={\MR{3676153}},
   doi={10.24033/msmf.457},
}

\bib{BPLZZ}{article}{
   author={Beuzart-Plessis, Rapha\"{e}l},
   author={Liu, Yifeng},
   author={Zhang, Wei},
   author={Zhu, Xinwen},
   title={Isolation of cuspidal spectrum, with application to the
   Gan-Gross-Prasad conjecture},
   journal={Ann. of Math. (2)},
   volume={194},
   date={2021},
   number={2},
   pages={519--584},
   issn={0003-486X},
   review={\MR{4298750}},
   doi={10.4007/annals.2021.194.2.5},
}

\bib{BPCZ}{article}{
   author={Beuzart-Plessis, Rapha\"{e}l},
   author={Chaudouard, Pierre-Henri},
   author={Zydor, Micha\l },
   title={The global Gan-Gross-Prasad conjecture for unitary groups: the
   endoscopic case},
   journal={Publ. Math. Inst. Hautes \'{E}tudes Sci.},
   volume={135},
   date={2022},
   pages={183--336},
   issn={0073-8301},
   review={\MR{4426741}},
   doi={10.1007/s10240-021-00129-1},
}

\bib{BCK21}{article}{
   author={Burungale, Ashay},
   author={Castella, Francesc},
   author={Kim, Chan-Ho},
   title={A proof of Perrin-Riou's Heegner point main conjecture},
   journal={Algebra Number Theory},
   volume={15},
   date={2021},
   number={7},
   pages={1627--1653},
   issn={1937-0652},
   review={\MR{4333660}},
   doi={10.2140/ant.2021.15.1627},
}

\bib{Car12}{article}{
   author={Caraiani, Ana},
   title={Local-global compatibility and the action of monodromy on nearby
   cycles},
   journal={Duke Math. J.},
   volume={161},
   date={2012},
   number={12},
   pages={2311--2413},
   issn={0012-7094},
   review={\MR{2972460}},
   doi={10.1215/00127094-1723706},
}

\bib{Car14}{article}{
   author={Caraiani, Ana},
   title={Monodromy and local-global compatibility for $l=p$},
   journal={Algebra Number Theory},
   volume={8},
   date={2014},
   number={7},
   pages={1597--1646},
   issn={1937-0652},
   review={\MR{3272276}},
   doi={10.2140/ant.2014.8.1597},
}

\bib{Cas}{article}{
   author={Casselman, W.},
   title={Introduction to the theory of admissible representations of $p$-adic groups},
   note={available at \url{https://personal.math.ubc.ca/~cass/research/pdf/p-adic-book.pdf}},
}

\bib{CV07}{article}{
   author={Cornut, Christophe},
   author={Vatsal, Vinayak},
   title={Nontriviality of Rankin-Selberg $L$-functions and CM points},
   conference={
      title={$L$-functions and Galois representations},
   },
   book={
      series={London Math. Soc. Lecture Note Ser.},
      volume={320},
      publisher={Cambridge Univ. Press, Cambridge},
   },
   isbn={978-0-521-69415-5},
   date={2007},
   pages={121--186},
   review={\MR{2392354}},
   doi={10.1017/CBO9780511721267.005},
}

\bib{Dis17}{article}{
   author={Disegni, Daniel},
   title={The $p$-adic Gross-Zagier formula on Shimura curves},
   journal={Compos. Math.},
   volume={153},
   date={2017},
   number={10},
   pages={1987--2074},
   issn={0010-437X},
   review={\MR{3692745}},
   doi={10.1112/S0010437X17007308},
}

\bib{DZ}{article}{
   author={Disegni, Daniel},
   author={Zhang, Wei},
   title={Gan--Gross--Prasad cycles and derivatives of $p$-adic $L$-functions},
   note={\href{https://arxiv.org/abs/2410.08401}{arXiv:2410.08401}},
}

\bib{Fou13}{article}{
   author={Fouquet, Olivier},
   title={Dihedral Iwasawa theory of nearly ordinary quaternionic
   automorphic forms},
   journal={Compos. Math.},
   volume={149},
   date={2013},
   number={3},
   pages={356--416},
   issn={0010-437X},
   review={\MR{3040744}},
   doi={10.1112/S0010437X12000619},
}

\bib{GGP12}{article}{
   author={Gan, Wee Teck},
   author={Gross, Benedict H.},
   author={Prasad, Dipendra},
   title={Symplectic local root numbers, central critical $L$ values, and
   restriction problems in the representation theory of classical groups},
   language={English, with English and French summaries},
   note={Sur les conjectures de Gross et Prasad. I},
   journal={Ast\'erisque},
   number={346},
   date={2012},
   pages={1--109},
   issn={0303-1179},
   isbn={978-2-85629-348-5},
   review={\MR{3202556}},
}

\bib{GS23}{article}{
   author={Graham, Andrew},
   author={Shah, Syed Waqar Ali},
   title={Anticyclotomic Euler systems for unitary groups},
   journal={Proc. Lond. Math. Soc. (3)},
   volume={127},
   date={2023},
   number={6},
   pages={1577--1680},
   issn={0024-6115},
   review={\MR{4673434}},
}

\bib{Har14}{article}{
   author={Harris, R. Neal},
   title={The refined Gross-Prasad conjecture for unitary groups},
   journal={Int. Math. Res. Not. IMRN},
   date={2014},
   number={2},
   pages={303--389},
   issn={1073-7928},
   review={\MR{3159075}},
   doi={10.1093/imrn/rns219},
}

\bib{Hid98}{article}{
   author={Hida, Haruzo},
   title={Automorphic induction and Leopoldt type conjectures for $\mathrm{GL}(n)$},
   note={Mikio Sato: a great Japanese mathematician of the twentieth
   century},
   journal={Asian J. Math.},
   volume={2},
   date={1998},
   number={4},
   pages={667--710},
   issn={1093-6106},
   review={\MR{1734126}},
   doi={10.4310/AJM.1998.v2.n4.a5},
}

\bib{How04}{article}{
   author={Howard, Benjamin},
   title={Iwasawa theory of Heegner points on abelian varieties of $\mathrm{GL}_2$ type},
   journal={Duke Math. J.},
   volume={124},
   date={2004},
   number={1},
   pages={1--45},
   issn={0012-7094},
   review={\MR{2072210}},
   doi={10.1215/S0012-7094-04-12411-X},
}

\bib{How05}{article}{
   author={Howard, Benjamin},
   title={The Iwasawa theoretic Gross-Zagier theorem},
   journal={Compos. Math.},
   volume={141},
   date={2005},
   number={4},
   pages={811--846},
   issn={0010-437X},
   review={\MR{2148200}},
   doi={10.1112/S0010437X0500134X},
}

\bib{JPSS83}{article}{
   author={Jacquet, H.},
   author={Piatetskii-Shapiro, I. I.},
   author={Shalika, J. A.},
   title={Rankin-Selberg convolutions},
   journal={Amer. J. Math.},
   volume={105},
   date={1983},
   number={2},
   pages={367--464},
   issn={0002-9327},
   review={\MR{701565}},
   doi={10.2307/2374264},
}

\bib{Jan11}{article}{
   author={Januszewski, Fabian},
   title={Modular symbols for reductive groups and $p$-adic Rankin-Selberg
   convolutions over number fields},
   journal={J. Reine Angew. Math.},
   volume={653},
   date={2011},
   pages={1--45},
   issn={0075-4102},
   review={\MR{2794624}},
   doi={10.1515/CRELLE.2011.018},
}

\bib{Jan15}{article}{
   author={Januszewski, Fabian},
   title={On $p$-adic $L$-functions for $\mathrm{GL}(n)\times\mathrm{GL}(n-1)$
   over totally real fields},
   journal={Int. Math. Res. Not. IMRN},
   date={2015},
   number={17},
   pages={7884--7949},
   issn={1073-7928},
   review={\MR{3404005}},
   doi={10.1093/imrn/rnu181},
}

\bib{Jan16}{article}{
   author={Januszewski, Fabian},
   title={$p$-adic $L$-functions for Rankin-Selberg convolutions over number
   fields},
   language={English, with English and French summaries},
   journal={Ann. Math. Qu\'{e}.},
   volume={40},
   date={2016},
   number={2},
   pages={453--489},
   issn={2195-4755},
   review={\MR{3529190}},
   doi={10.1007/s40316-016-0061-y},
}

\bib{KMSW}{article}{
   author={Kaletha, Tasho},
   author={Minguez, Alberto},
   author={Shin, Sug Woo},
   author={White, Paul-James},
   title={Endoscopic Classification of Representations: Inner Forms of Unitary Groups},
   note={\href{https://arxiv.org/abs/1409.3731}{arXiv:1409.3731}},
}

\bib{KMS00}{article}{
   author={Kazhdan, D.},
   author={Mazur, B.},
   author={Schmidt, C.-G.},
   title={Relative modular symbols and Rankin-Selberg convolutions},
   journal={J. Reine Angew. Math.},
   volume={519},
   date={2000},
   pages={97--141},
   issn={0075-4102},
   review={\MR{1739728}},
   doi={10.1515/crll.2000.019},
}

\bib{LL}{article}{
   author={Li, Chao},
   author={Liu, Yifeng},
   title={Chow groups and $L$-derivatives of automorphic motives for unitary groups},
   journal={Ann. of Math. (2)},
   volume={194},
   date={2021},
   number={3},
   pages={817--901},
   issn={0003-486X},
   doi={10.4007/annals.2021.194.3.6},
}

\bib{LS}{article}{
   author={Liu, Dongwen},
   author={Sun, Binyong},
   title={Nearly ordinary $p$-adic period integrals},
   note={in preparation},
}

\bib{LTX}{article}{
   author={Liu, Yifeng},
   author={Tian, Yichao},
   author={Xiao, Liang},
   title={Iwasawa's main conjecture for Rankin--Selberg motives in the anticyclotomic case},
   note={in preparation},
}

\bib{LTXZZ}{article}{
   author={Liu, Yifeng},
   author={Tian, Yichao},
   author={Xiao, Liang},
   author={Zhang, Wei},
   author={Zhu, Xinwen},
   title={On the Beilinson-Bloch-Kato conjecture for Rankin-Selberg motives},
   journal={Invent. Math.},
   volume={228},
   date={2022},
   number={1},
   pages={107--375},
   issn={0020-9910},
   review={\MR{4392458}},
   doi={10.1007/s00222-021-01088-4},
}

\bib{Loe21}{article}{
   author={Loeffler, David},
   title={Spherical varieties and norm relations in Iwasawa theory},
   language={English, with English and French summaries},
   journal={J. Th\'{e}or. Nombres Bordeaux},
   volume={33},
   date={2021},
   number={3},
   pages={1021--1043},
   issn={1246-7405},
   review={\MR{4402388}},
   doi={10.1007/s10884-020-09844-5},
}

\bib{MVW}{book}{
   author={M\oe glin, Colette},
   author={Vign\'eras, Marie-France},
   author={Waldspurger, Jean-Loup},
   title={Correspondances de Howe sur un corps $p$-adique},
   language={French},
   series={Lecture Notes in Mathematics},
   volume={1291},
   publisher={Springer-Verlag, Berlin},
   date={1987},
   pages={viii+163},
   isbn={3-540-18699-9},
   review={\MR{1041060}},
   doi={10.1007/BFb0082712},
}

\bib{Mok15}{article}{
   author={Mok, Chung Pang},
   title={Endoscopic classification of representations of quasi-split
   unitary groups},
   journal={Mem. Amer. Math. Soc.},
   volume={235},
   date={2015},
   number={1108},
   pages={vi+248},
   issn={0065-9266},
   isbn={978-1-4704-1041-4},
   isbn={978-1-4704-2226-4},
   review={\MR{3338302}},
   doi={10.1090/memo/1108},
}

\bib{Nek93}{article}{
   author={Nekov\'{a}\v{r}, Jan},
   title={On $p$-adic height pairings},
   conference={
      title={S\'{e}minaire de Th\'{e}orie des Nombres, Paris, 1990--91},
   },
   book={
      series={Progr. Math.},
      volume={108},
      publisher={Birkh\"{a}user Boston, Boston, MA},
   },
   date={1993},
   pages={127--202},
   review={\MR{1263527}},
   doi={10.1007/s10107-005-0696-y},
}

\bib{PR87}{article}{
   author={Perrin-Riou, Bernadette},
   title={Fonctions $L$ $p$-adiques, th\'{e}orie d'Iwasawa et points de Heegner},
   language={French, with English summary},
   journal={Bull. Soc. Math. France},
   volume={115},
   date={1987},
   number={4},
   pages={399--456},
   issn={0037-9484},
   review={\MR{928018}},
}

\bib{SV17}{article}{
   author={Sakellaridis, Yiannis},
   author={Venkatesh, Akshay},
   title={Periods and harmonic analysis on spherical varieties},
   language={English, with English and French summaries},
   journal={Ast\'{e}risque},
   number={396},
   date={2017},
   pages={viii+360},
   issn={0303-1179},
   isbn={978-2-85629-871-8},
   review={\MR{3764130}},
}

\bib{TY07}{article}{
   author={Taylor, Richard},
   author={Yoshida, Teruyoshi},
   title={Compatibility of local and global Langlands correspondences},
   journal={J. Amer. Math. Soc.},
   volume={20},
   date={2007},
   number={2},
   pages={467--493},
   issn={0894-0347},
   review={\MR{2276777}},
   doi={10.1090/S0894-0347-06-00542-X},
}

\bib{Wal03}{article}{
   author={Waldspurger, J.-L.},
   title={La formule de Plancherel pour les groupes $p$-adiques (d'apr\`es
   Harish-Chandra)},
   language={French, with French summary},
   journal={J. Inst. Math. Jussieu},
   volume={2},
   date={2003},
   number={2},
   pages={235--333},
   issn={1474-7480},
   review={\MR{1989693}},
   doi={10.1017/S1474748003000082},
}

\bib{Wan21}{article}{
   author={Wan, Xin},
   title={Heegner point Kolyvagin system and Iwasawa main conjecture},
   journal={Acta Math. Sin. (Engl. Ser.)},
   volume={37},
   date={2021},
   number={1},
   pages={104--120},
   issn={1439-8516},
   review={\MR{4204538}},
   doi={10.1007/s10114-021-8355-7},
}

\bib{Zha14}{article}{
   author={Zhang, Wei},
   title={Automorphic period and the central value of Rankin-Selberg
   L-function},
   journal={J. Amer. Math. Soc.},
   volume={27},
   date={2014},
   number={2},
   pages={541--612},
   issn={0894-0347},
   review={\MR{3164988}},
   doi={10.1090/S0894-0347-2014-00784-0},
}

\end{biblist}
\end{bibdiv}

\end{document}